\newtheorem{theorem}{Theorem}[section]
\newtheorem{lemma}[theorem]{Lemma}
\newtheorem{corollary}[theorem]{Corollary}
\def\T{\mathbb{T} } 
\def\S{\mathbb{S} }
\def\D{\mathbb{D} } 
\def\R{\mathbb{R} } 
\def\Z{\mathbb{Z} } 
\def\nbd{neighborhood } 
\def\nbds{neighborhoods } 
\def\R{\mathbb{R} } 
\def\iff{if and only if }
\def\-{\ominus} 
\def\+{\oplus} 
\def\0{\circ}
\title{Graph representations of surface flows}
\author{Tomoo Yokoyama}
\date{\today}
\address{Department of Mathematics, Kyoto University of Education/JST PRESTO, 
1 Fujinomori, Fukakusa, Fushimi-ku Kyoto, 612-8522, Japan \\
}
\email{tomoo@kyokyo-u.ac.jp}
\subjclass[2010]{Primary 37E35, 54B15; Secondary 05C62, 54D10}
\keywords{surface flows, non-wandering property, abstract finite multi-graphs, orbit spaces}
\thanks{The author is partially supported
by the JST PRESTO Program at Department of Mathematics, Kyoto University of Education.}
\begin{document}

\begin{abstract}
We construct a complete invariant for non-wandering surface flows with finitely many singular points but without locally dense orbits. Precisely, we show that a flow $v$ with finitely many singular points on a compact connected surface $S$ is a non-wandering flow without locally dense orbits if and only if $S/v_{\mathrm{ex}}$ is a non-trivial embedded multi-graph, where the extended orbit space $S/v_{\mathrm{ex}}$ is the quotient space defined by $x \sim y$ if they belong to either a same orbit or a same multi-saddle connection. Moreover, collapsing edges of the non-trivial embedded multi-graph $S/v_{\mathrm{ex}}$ into singletons, the quotient space $(S/v_{\mathrm{ex}})/\sim_E$ is an abstract multi-graph with the Alexandroff topology with respect to the specialization order.  
Therefore the non-wandering flow $v$ with finitely many singular points but without locally dense orbits 
can be reconstruct by finite combinatorial structures, which are the multi-saddle connection diagram and the abstract multi-graph $(S/v_{\mathrm{ex}})/\sim_E$ with labels. 
Moreover, though the set of topological equivalent classes of irrational rotations (i.e. minimal flows) on a torus is uncountable,  
the set of topological equivalent classes of non-wandering flows with finitely many singular points but without locally dense orbits on compact surfaces is enumerable by combinatorial structures algorithmically.  

\end{abstract}

\maketitle

\section{Introduction}
The main purpose of this paper is to study the relationship 
between 
surface flows 
and 
topologies   
using finite graphs. 
A basic result of Morse theory   
says that 
gradient flows of Morse functions on closed surfaces are characterized by the set of separatrices of saddles, 
which are finite directed graphs. 
The Morse theory for gradient vector fields on compact manifolds is 
extended to an index theory for Smale flows on compact manifolds using Lyapunov graphs \cite{F}, 
which is a generalization of a quotient space of gradient functions. 
In \cite{RF}, 
a characterization of Lyapunov graphs associated to smooth surface flows is presented.
It's known that Lyapunov graphs 
are not complete invariant 
for Morse-Smale flows (i.e. 
there are Morse-Smale flows with isomorphic Lyapunov graphs but which are not topologically equivalent)  
but 
that  
Peixoto graphs are complete invariant for Morse-Smale flows \cite{N2}. 
In \cite{N}, 
non-wandering flows with finitely many singular points on compact surfaces are classified 
up to a graph-equivalence 
by using a  topological invariant, called a Conley-Lyapunov-Peixoto graph, 
equipped with the rotation and the weight functions. 
The graph-equivalence conjugates two non-wandering flows at the multi-saddle connection diagrams 
forgetting the equivalence between the quasi-minimal sets.
Moreover, 
Hamiltonian flows are topologically equivalent 
if and only if 
their Conley-Lyapunov-Peixoto graphs are isomorphic. 
%
On the other hand,   
the quotient maps of orbit spaces of generalized gradient vector fields for Morse functions
are weak homotopy equivalent and have the path lifting property \cite{CG}. 
In this paper, 
we study properties of surface flows
and 
construct another complete invariant, 
which are finite labeled multi-graphs, 
for a non-wandering surface flow  
with finitely many singular points but without locally dense orbits.  
In other words, 
such a flow
can be reconstruct by finite combinatorial structures. 
%
%
Precisely, we show the following statements. 
Let $v$ be a flow with 
finitely many singular points 
on a compact connected surface $S$. 
Then 
$v$ is a non-wandering flow with 
$\mathrm{LD} = \emptyset$ 
\iff  
the extended orbit space $S/v_{\mathrm{ex}}$ is a non-trivial embedded multi-graph, 
where 
$\mathrm{LD}$ is the union of locally dense orbits 
and 
$S/v_{\mathrm{ex}}$ 
is the quotient space defined by 
$x \sim y$ if they belong to 
either a same orbit 
or a same multi-saddle connection. 
Moreover, 
collapsing edges of the multi-graph $S/v_{\mathrm{ex}}$ into singletons,  
the quotient space $(S/v_{\mathrm{ex}})/\sim_E$ is 
an abstract multi-graph with the Alexandroff topology 
with respect to the specialization order.  
In addition, 
considering the multi-saddle connection diagram $D$ of 
a non-wandering flow $v$ with 
$|\mathop{\mathrm{Sing}}(v)| < \infty$ and 
$\mathrm{LD} = \emptyset$ on a compact surface, 
the flow $v$ can be reconstruct by 
the abstract 
multi-saddle connection diagram $D/v$, which is an abstract multi-graph,  
and 
by the abstract multi-graph $(S/v_{\mathrm{ex}})/\sim_E$ with labels, 
both of which are finite combinatorial structures.

Since the set of topological equivalent classes of 
irrational rotations (i.e. minimal flows) on a torus 
is uncountable (cf. Theorem 7.1.5 \cite{NZ}), 
so is the set of topological equivalent classes of 
non-wandering flows with finitely many singular points on compact surfaces. 
On the other hands, 
the set is countable under the non-existence of locally dense orbits. 
In other words, the set of topological equivalent classes of 
non-wandering flows with finitely many singular points but without locally dense orbits 
on compact surfaces is enumerable by combinatorial structures 
algorithmically.


\section{Preliminaries}

\subsection{Notions of dynamical systems}
We recall some basic notions. A good reference for most of what we describe are the book by S. Aranson, G. Belitsky, and E. Zhuzhoma \cite{ABZ}. 
By flows, 
we mean continuous $\mathbb{R}$-actions on surfaces. 
Let $v$ be a flow on a compact surface $S$. 
A subset of $S$ is said to be saturated 
if 
it is a union of orbits. 
The saturation $\mathrm{Sat}_v(A)$ of a subset $A \subseteq S$ is 
the union of orbits of elements of $A$. 
Recall that 
a point $x$ of $S$ is 
singular if 
$x = v_t(x)$ for any $t \in \R$, 
is regular if 
$x$ is not singular, 
and 
is periodic if 
there is positive number $T > 0$
such that 
$x = v_T(x)$ and  
$x \neq v_t(x)$ for any $t \in (0, T)$. 
Denote by 
$\mathop{\mathrm{Sing}}(v)$ 
(resp. $\mathop{\mathrm{Per}}(v)$) 
the set of singular (resp. periodic) points. 
A point $x$ is 
non-wandering if  
for each neighbourhood $U$ of $x$ and 
each positive number $N$, 
there is $t \in \mathbb{R}$ with $|t| > N$ such that 
$v_t(U) \cap U \neq \emptyset$. 
An orbit is non-wandering if 
it consists of non-wandering points 
and 
the flow $v$ is non-wandering if 
every point is non-wandering. 
For a point $x \in S$, 
define 
the omega limit set $\omega(x)$
and 
the alpha limit set $\alpha(x)$ 
of $x$
as follows: 
$\omega(x) 
:= \bigcap_{n\in \mathbb{R}}\overline{\{v_t(x) \mid t > n\}} 
$, 
$\alpha(x) 
:= \bigcap_{n\in \mathbb{R}}\overline{\{v_t(x) \mid t < n\}} 
$. 
A point $x$ of $S$ is 
recurrent (resp. weakly recurrent) 
if $x \in \omega(x) \cap \alpha(x)$ 
(resp.  $x \in \omega(x) \cup \alpha(x)$).  
%
A quasi-minimal set is an orbit closure of a weakly recurrent orbit. 
%
%
An orbit is proper if 
it is embedded (i.e. there is a \nbd of it where the orbit is closed), 
locally dense if 
the closure of it has nonempty interior, 
and 
exceptional if 
it is neither proper nor locally dense.  
A point is proper (resp. locally dense, exceptional) if 
so is its orbit. 
Denote by 
$\mathrm{LD}$ 
(resp. $\mathrm{E}$, 
$\mathrm{Pr}$, 
$\mathrm{P}$)
the union of locally dense orbits 
(resp. exceptional orbits, 
proper orbits, 
non-closed proper orbits).  
Denote by 
$\mathop{\mathrm{Cl}}(v)$ the union of closed orbits. 
In other words, 
we define 
$\mathop{\mathrm{Cl}}(v) := \mathop{\mathrm{Sing}}(v) \sqcup \mathop{\mathrm{Per}}(v)$, 
where $\sqcup$ is the disjoint union symbol. 
By the definitions,  
we have a decomposition 
$\mathop{\mathrm{Sing}}(v) \sqcup 
\mathop{\mathrm{Per}}(v) \sqcup \mathrm{P}
\sqcup \mathrm{LD} 
\sqcup \mathrm{E} = S$.  
Note that 
$\mathrm{P}$ is the complement of 
the set of weakly recurrent points.  
%
%
%
%
%
%
%
%
Recall that 
the (orbit) class $\hat{O}$ of an orbit $O$ is 
the union of orbits each of whose orbit closure 
corresponds with $\overline{O}$ 
(i.e. 
$\hat{O} = \{ y \in S \mid \overline{O(y)} = \overline{O} \} $). 
The quotient space by (orbit) classes
 is called 
the (orbit) class space 
and denoted by $S/\hat{v}$. 
%
A separatrix is a regular orbit whose $\alpha$-limit or $\omega$-limit set is a singular point. 
A $\partial$-$k$-saddle (resp. $k$-saddle) is 
an isolated singular point on (resp. outside of) $\partial S$ with exactly $(2k + 2)$-separatrices, 
counted with multiplicity, 
where $\partial S$ is the boundary of a surface $S$. 
For a ($\partial$-)$k$-saddle $x$, define the degree $\deg x := 2k + 2$.  
A $1$-saddle is topologically an ordinary saddle. 
A multi-saddle is 
a $k$-saddle 
or a $\partial$-$(k/2)$-saddle for some $k \in \mathbb{Z}_{\geq 0}$. 
The (multi-)saddle connection diagram 
is the union of (multi-)saddles, 
(multi-)$\partial$-saddles, and 
separatrices connecting (multi-)($\partial$-)saddles. 
A (multi-)saddle connection is 
a connected component of 
the (multi-)saddle connection diagram. 
A multi-saddle connection is also 
called a polycycle. 
%
By an extended orbit (resp. class) of a flow, 
we mean a multi-saddle connection or 
an orbit (resp. class) which is not contained in any multi-saddle connection.  
Note that 
an extended orbit is an analogous notion of  
``demi-caract\'eristique'' in the sense of Poincar\'e. 
In other words, 
``an extended positive orbit'' is ``un demi-caract\'eristique'' 
in the sense of Poincar\'e \cite{P}. 
An extended orbit is non-degenerate if 
it is not a singleton. 
Denote by $O_{\mathrm{ex}}(x)$ the extended orbit containing $x$ 
and 
by $\hat{O}_{\mathrm{ex}}(x)$ the extended class containing $x$. 
The quotient space by extended orbits (resp. classes) of a flow $v$ on a surface $S$ is called 
the extended orbit (resp. class) space 
and denoted by ${S}/{v_{\mathrm{ex}}}$ (resp. ${S}/\hat{v}_{\mathrm{ex}}$). 
Recall that 
an isolated singular point is called a topological center if 
there is a saturated \nbd consisting of closed orbits. 
An singular point $p$ is quasi-isolated if 
there is a saturated \nbd $U$ 
such that $\mathop{\mathrm{Sing}}(v) \cap U$ is totally disconnected. 
A quasi-isolated singular point $p$ is called a quasi-center if 
there is a saturated \nbd $U$ 
which consists of closed extended orbits. 
A flow is non-trivial if 
it is 
neither identical nor minimal. 
Note that 
the orbit spaces 
$S/v = S/v_{\mathrm{ex}}$ for each trivial flow $v$ on a surface $S$ are  
either the whole surface $S$ or 
a indiscrete topological space
 and 
 other quotient spaces 
$S/\hat{v} ={S}/{\hat{v}_{\mathrm{ex}}}$ 
are 
either the whole surface $S$ or a singleton 
and so that all the quotient spaces are non-$1$-dimensional.  
Moreover, 
notice that 
two non-wandering minimal flows 
need not be topological equivalent even if 
their (extended) orbit spaces (resp. (extended) orbit class spaces) are homeomorphic.  
In fact, consider 
minimal toral flows which correspond to cohomology classes 
$[dx_1 + \sqrt{2} dx_2],  
[dx_1 + \pi dx_2] \in H^1(\mathbb{T}^2, \Z)$ respectively.  
Though 
the orbit spaces are indiscrete spaces with cardinalities of the continuum 
and 
the (extended) orbit class spaces are singletons, 
the minimal flows are not topological equivalent 
because $\pi$ is transcendental and $\sqrt{2}$ is algebraic. 

A periodic orbit $O$ 
is called a periodic orbit with a one-sided neighborhood 
if 
there is an open saturated \nbd $U$ of $O$ 
and the canonical projection $\pi:U \to U/v \cong [0,1)$ 
with $\pi(O) = 0$ (i.e. $\pi(O)$ is the boundary of a half-open interval). 
Note that 
each periodic orbit of a non-wandering flow 
in $\partial S$ is a periodic orbit with a one-sided neighborhood. 
We call that 
a torus (resp. annulus) $U \subseteq S$ is a periodic torus (resp. periodic annulus) if 
it consists of periodic orbits.  

\subsection{Topological notions}

Recall that 
a poset is a set with a partial order 
(i.e.  reflexive, antisymmetric, and transitive order) 
and 
a $k$-chain is a totally ordered set with $k+1$ elements. 
For an element $x$ of a poset $P$,  
the height of $x$ 
is at least $k \in \Z$ 
if there is a $k$-chain whose maximal element is $x$. 
The height of $P$ is the superior of 
heights of elements. 
For any $k \in \Z_{\geq 0}$, 
denote by $P_k$ the set of elements each of whose height is $k$. 
%
Elements 
$a$ and $b$ of a poset are comparable if 
either $a \leq b$ or $a \geq b$. 
A poset $P$ is said to be connected 
if for each pair $a , b \in P$, 
there is a finite sequence $(a_i)_{i =0}^n$
of $P$ 
from  $a=a_0$ to $b = a_n$ 
such that 
$a_i$ and $a_{i+1}$ are comparable for each $i = 0, 1, 2, \dots ,n - 1$.
A subset $A \subset P$ is a downset (resp. upset) if 
$A = \bigcup_{x \in A} \mathop{\downarrow} x$ 
(resp. $A = \bigcup_{x \in A} \mathop{\uparrow} x$ ), 
where $\mathop{\downarrow} x := \{ y \in P \mid y \leq x \}$ 
and 
$\mathop{\uparrow} x := \{ y \in P \mid x \leq y \}$. 
The topology on a poset $(P, \leq )$ which consists of all upsets 
is called the Alexandroff topology of $P$ and 
denoted by $\mathcal{A}(\leq)$. 
Note that each downset  is closed with respect to the Alexandroff topology $\mathcal{A}(\leq)$. 
A poset $P$ is said to be multi-graph-like 
if the height of $P$ is at most one 
and  
$| \mathop{\downarrow} x | \leq 3$ 
for any element $x \in P$.  
For a multi-graph-like poset $P$, 
each element of $P_0$ is called a vertex 
and 
each element of $P_1$ is called an edge. 
%
%

An ordered triple $G := (V, E, r)$ 
is an abstract multi-graph if 
$V$ and $E$ are sets 
and 
$r : E \to \{ \{ x,y \} \mid x, y \in V \}$. 
Each element of $V$ (resp. $E$) is called a vertex (resp. an edge). 
Then 
an abstract multi-graph $G$ can be considered as 
a multi-graph-like poset $(P, \leq_G)$ 
with $V = P_{0}$ and $E = P_1$ 
as follows: 
$P = V \sqcup E$ 
and 
$x <_G e$ if 
$x \in r(e)$. 
Conversely, 
a multi-graph-like poset $P$ can be considered as an abstract multi-graph 
with $V = P_{0}$, 
$E = P_1$, 
and 
$r: P_1 \to \{ \{ x,y \} \mid x, y \in V \}$ defined by 
$r(e) := \mathop{\downarrow}e - \{ e \}$. 
A multi-graph-like poset is said to be tree-like if 
it is a tree as an abstract multi-graph. 
A multi-graph $G = (V, E)$ is 
a cell complex whose dimension is at most one 
and 
which is a geometric realization of an abstract multi-graph 
(i.e. a graph which is permitted to have multiple edges and loops). 
By abuse of terminology, we also refer to $G$ as the underlying set $V \sqcup \bigsqcup E$. 
%
Note that 
a multi-graph $G = (V, E)$ is $T_2$  
and 
the multi-edge set $E$ consists of open intervals with $\bigsqcup E = G - V$. 
A multi-graph $G = (V, E)$ in a surface $S$ is embedded if 
for any $e \in E$  there is an open neighborhood of $e$ in $S$ 
which intersect no other edges and no vertices, 
and for any $v \in V$ 
there is an open neighborhood of $v$ 
which intersects 
no vertices except $v$ and 
no edges not incident to $v$. 
%
A planar multi-graph is a multi-graph that can be embedded in a puncture disk. 
In other words, it can be drawn in such a way that no edges cross each other. 
Such a drawing is called a plane graph. 
%
For a multi-graph $G$, 
define an equivalence relation $\sim_e$ as follows: 
for any points $x, y \in G$, 
$x \sim_e y$ if 
they belong to a same edge. 
Note that 
the quotient space $G/\sim_e$ 
of a multi-graph $G$ is obtained by 
collapsing edges into singletons 
and that  
a finite poset of height at most one corresponds with 
a finite abstract multi-hyper-graph. 

A point $x$ of a topological space $X$ is 
$T_0$ if 
for any point $y \neq x \in X$, 
there is an open subset $U$ of $X$ such that 
$|\{x, y \} \cap U| =1$.  
%
The specialization order $\leq_{\tau}$ on a $T_0$-space $(X, \tau)$ 
is defined as follows: 
$ x \leq_{\tau} y $ if  $ x \in \overline{\{ y \}}$. 
Note that each closed subset of a $T_0$-space $(X, \tau)$ is 
a downset with respect to the specialization order $\leq_{\tau}$.

\subsection{Graph representations of surface flows}

For a non-wandering flow $v$ on a compact surface $S$, 
denote by $V_{\mathrm{ex}}$ 
the set of elements each of which is 
either 
a quasi-center, 
a multi-saddle connection, 
or a periodic orbit with a one-sided neighborhood. 
Notice that 
a quasi-center is a topological center 
if $\mathop{\mathrm{Sing}}(v)$ is finite. 
Let $\mathcal{D}$ be the set of connected components of 
the quotient space $D/v$ of 
the multi-saddle connection diagram $D$. 
Define 
a label $l_{V} : V_{\mathrm{ex}} \to \{ c, n, b \} \sqcup \mathcal{D}$ as follows: 
$l_{V}(O) := c$ (resp. $n, b$) if $O$ is a quasi-center  
(resp. a periodic orbit with one-sided neighborhood off $\partial S$, a periodic orbit on $\partial S$) 
and  
$l_{V}(O) := O/v \subseteq D/v$ if $O$ is a multi-saddle connection.  
Put $\widetilde{E_{\mathrm{ex}}}$ 
the set of connected components of $\mathop{\mathrm{Per}}(v) - N$, 
where $N$ is the union of periodic orbits with one-sided neighborhoods. 
Write $E_{\mathrm{ex}} := \widetilde{E_{\mathrm{ex}}}/v$. 
Note that 
each element of $\widetilde{E_{\mathrm{ex}}}$ is an open annulus 
unless it is a periodic torus. 
We define the edge mapping $r_{\mathrm{ex}}$ and 
the label mapping $l_{\mathrm{ex}}$ as follows: 
Define 
$r_{\mathrm{ex}} : E_{\mathrm{ex}} \to \{ \{ O_{\mathrm{ex}}, O'_{\mathrm{ex}} \} \mid 
O_{\mathrm{ex}}, O'_{\mathrm{ex}} \in V_{\mathrm{ex}} \} \sqcup \{ \emptyset \}$ 
by $r_{\mathrm{ex}}([T]) := \emptyset$ 
for a periodic torus $T$ and 
by $r_{\mathrm{ex}}([U]) := 
\{ O_{\mathrm{ex}} \in V_{\mathrm{ex}} \mid \partial_+ U \subseteq O_{\mathrm{ex}} 
\text{ or } 
\partial_- U \subseteq O_{\mathrm{ex}} \}$ 
for a  periodic annulus $U$, 
where $\partial_- U$ (resp. $\partial_+ U$) are 
the negative (resp. positive) connected components of $\partial U$ 
with respect to the flow direction such that 
$\partial U = \partial_- U \cup \partial_+ U$.   
An Edge $[U] \in E_{\mathrm{ex}}$ is called a loop if
$|r_{\mathrm{ex}}([U])|= 1$. 
Define also 
the label $l_{\mathrm{ex}} : E_{\mathrm{ex}} \to \{ ( \partial_- U/v, \partial_+ U/v ) \mid U \in E_{\mathrm{ex}}  \} 
\sqcup \{ \emptyset \}$ of $v$ 
by $l_{\mathrm{ex}}([T]) :=  \emptyset$ for a periodic torus $T$ 
and  
by $l_{\mathrm{ex}}([U]) :=  ( \partial_- U/v, \partial_+ U/v )$ for a periodic annulus $U$. 
Define also the reduced label $\widetilde{l_{\mathrm{ex}}} : 
E_{\mathrm{ex}} \to \{ \{ \partial_- U/v, \partial_+ U/v \} \mid U \in E_{\mathrm{ex}}  \} \sqcup \{ \emptyset \}$ of $v$ forgetting orders of pairs.  
%
We will show that,  
for an edge $U \in E_{\mathrm{ex}}$ of a non-wandering flow $v$ on a compact surface $S$ with 
$\mathrm{LD} = \emptyset$ and $|\mathop{\mathrm{Sing}}(v)| < \infty$,  
each boundary component (i.e. connected component of the boundary) 
 $\partial_{\sigma} U$ (for some $\sigma \in \{ -, + \}$)
is either an element of $V_{\mathrm{ex}}$ or 
a proper subset of a multi-saddle connection,  and 
the quotient space $\partial_{\sigma} U/v$ is a finite multi-graph-like poset.   
Define an equivalent relation $\sim_E$ on $S/v_{\mathrm{ex}}$ as follows: 
$O_{\mathrm{ex}} \sim_E O'_{\mathrm{ex}}$ 
if 
they are contained in a connected component of 
$\mathrm{LD} \sqcup (\mathop{\mathrm{Per}}(v) - N)$, 
where $N$ is the union of periodic orbits with one-sided neighborhoods. 
Denote by $G_v := ((S/v_{\mathrm{ex}})/\sim_E, l_{V},  l_{\mathrm{ex}})$ 
the quotient space $(S/v_{\mathrm{ex}})/\sim_E$ with the label $(l_{V}, l_{\mathrm{ex}})$.  
%
%
Recall that a multi-graph is trivial if 
it has exactly one vertex and no edges. 
%
Note that 
the quotient space $(S/v_{\mathrm{ex}})/\sim_E$ 
for a non-wandering flow $v$ on a compact connected surface $S$ is trivial 
if and only if 
either $v$ is minimal (i.e. $S = \T^2 = \mathrm{LD}$) or 
$v$ is a rational rotation on $\T^2$ $($i.e. $S = \T^2 = \mathop{\mathrm{Per}}(v) )$.

%
%

\section{Properties of the multi-saddle connection diagram}

%

We state the relation between multi-graphs and abstract multi-graphs. 

\begin{lemma}\label{lem01}
The quotient space of a multi-graph by $\sim_e$ 
is a multi-graph-like poset 
with respect to the specialization order. 
\end{lemma}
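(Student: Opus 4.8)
The plan is to describe the points and the topology of $G/\sim_e$ explicitly and then simply read off the specialization order. Recall that each edge $e \in E$ is an open interval and that $\bigsqcup E = G - V$; since $V$ is closed (as $G$ is $T_2$), the set $G-V$ is open, and each $e$, being a connected component of the open set $G-V$ in the locally connected space $G$, is open in $G$. Moreover each $e$ is a single $\sim_e$-class while each vertex $v \in V$ forms its own (singleton) class, so the points of $G/\sim_e$ are in natural bijection with $V \sqcup E$; I write $[v]$ and $[e]$ for the images under the quotient map $q \colon G \to G/\sim_e$.

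First I would verify that $G/\sim_e$ is $T_0$, so that the specialization order is defined. Because $e$ is open and saturated, its saturation is itself and hence $q(e) = \{[e]\}$ is open in the quotient; because each $\{v\}$ is closed and saturated, $\{[v]\}$ is closed. Thus any two distinct points are separated: the open singleton of an edge-point contains it but not any other point, and the open complement of a closed vertex-point separates two distinct vertices. Consequently $G/\sim_e$ is $T_0$.

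Next I would compute closures to pin down $\leq_{\tau}$. A subset of $G/\sim_e$ is closed \iff its $q$-preimage is a closed saturated subset of $G$. Since every $\sim_e$-class is either one edge or one vertex, the smallest closed saturated set containing $e$ is its topological closure $\overline{e} = e \cup \partial e$, where $\partial e \subseteq V$ is the set of at most two endpoints of $e$. Hence $\overline{\{[e]\}} = \{[e]\} \cup \{[w] \mid w \in \partial e\}$, whereas $\overline{\{[v]\}} = \{[v]\}$. Reading off the relation $x \leq_{\tau} y$, which holds \iff $x \in \overline{\{y\}}$, we see that the vertices are exactly the closed (minimal) points, each edge-point lies strictly above precisely its endpoint vertices, and no point lies strictly above an edge-point.

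Finally I would check the two defining conditions of a multi-graph-like poset. Every maximal chain has the form $[v] <_{\tau} [e]$, so the height is at most one; and $\mathop{\downarrow}[v] = \{[v]\}$, while $\mathop{\downarrow}[e] = \{[e]\} \cup \{[w] \mid w \in \partial e\}$ has at most three elements (exactly two in the loop case $|\partial e| = 1$), so $|\mathop{\downarrow}x| \le 3$ for every $x$. The only delicate point is the closure computation: one must be sure that the saturated closure of an open edge adds its endpoint vertices and nothing more, which is exactly where the structure of $G$ as a one-dimensional cell complex enters, since there $\overline{e}\setminus e$ consists of at most two vertices.
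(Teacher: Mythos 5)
Your proof is correct and follows essentially the same route as the paper's: both arguments identify the classes as vertices and edges, observe that vertex singletons are closed (hence height $0$) while the closure of an edge adds only its one or two endpoint vertices (hence height $1$ and $|\mathop{\downarrow}[e]|\le 3$). You simply make explicit two steps the paper leaves implicit, namely the $T_0$ verification needed for the specialization order to be a partial order and the computation of saturated closures.
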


\begin{proof}
Let $Q$ be the quotient space $G/\sim_e$ 
of a multi-graph $G$ equipped with the specialization order, 
$p: G \to Q$ the canonical projection,  
and $V$ the set of vertices. 
Then 
the complement $Q - V$ 
consists of edges.  
Since each singleton of $G$ is closed,  
the height of each element of $V$ is $0$. 
Since the closure of each edge contains exactly one or two vertices, 
the height of each edge is $1$ 
and so $Q$ is a multi-graph-like poset. 
\end{proof}

Applying the previous lemma to multi-saddle connections, 
we obtain a following statement.

\begin{lemma}
Let $v$ be a flow on a compact surface $S$ 
and $D$ the multi-saddle connection diagram. 
Then 
the restriction $D/v$ of the orbit space of $v$ 
is a multi-graph-like poset 
with respect to the specialization order. 
%
%
%
\end{lemma}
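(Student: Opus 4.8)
The plan is to realize the multi-saddle connection diagram $D$ as a multi-graph, to check that its decomposition into orbits coincides with the edge-collapsing relation $\sim_e$, and then to invoke Lemma~\ref{lem01}. First I would equip $D$ with a one-dimensional cell structure. By definition the singular points lying in $D$ are multi-saddles, hence isolated singular points from each of which only finitely many separatrices emanate (the degree of a $k$-saddle or of a $\partial$-saddle being finite); and the separatrices lying in $D$ are exactly those connecting multi-saddles, so that the $\alpha$-limit set and the $\omega$-limit set of each of them are singular points. Consequently the closure of such a separatrix is the orbit together with its two limit points, which is a closed arc when the endpoints are distinct and a loop when they coincide. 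Taking the multi-saddles as the vertex set $V$ and the separatrix closures as the closed edges, I would verify that distinct edges meet only at common vertices---their interiors are disjoint regular orbits---and that each vertex carries only finitely many edge-germs. This exhibits $D$ as a geometric realization of an abstract multi-graph $G = (V,E)$ with $\bigsqcup E = D - V$; note that no global finiteness hypothesis on $\mathop{\mathrm{Sing}}(v)$ is needed, since local finiteness already follows from the finite degree of a single multi-saddle.

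Next I would identify the orbit space of $D$ with the edge quotient of $G$. Since $D$ is a union of orbits---each vertex being a singular orbit and each open edge a single separatrix orbit---the set $D$ is saturated, so the image $D/v \subseteq S/v$ carries exactly the quotient topology inherited from $D$. On $D$ the orbit relation identifies precisely the points of each separatrix while fixing each singular point, which is exactly the relation $\sim_e$ collapsing every edge of $G$ to a point and leaving the vertices untouched. Hence $D/v = G/\sim_e$ as topological spaces, and in particular their specialization orders coincide. Applying Lemma~\ref{lem01} to the multi-graph $G$, the quotient $G/\sim_e$ is a multi-graph-like poset with respect to the specialization order, and the identification $D/v = G/\sim_e$ then yields the claim.

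The step I expect to be the main obstacle is the first one, namely the rigorous verification that $D$ is a one-dimensional cell complex. The delicate points are to confirm that the closure of each separatrix in $D$ acquires nothing beyond its two limit singular points---so that no edge runs through an interior vertex---that distinct separatrices are disjoint away from the saddles they share, and that the germ of $D$ at each multi-saddle is modelled on finitely many arcs issuing from the vertex, so that the resulting gluing of arcs and loops is a genuine multi-graph. Once this cell structure is secured, the remaining identification with $G/\sim_e$ and the appeal to Lemma~\ref{lem01} are formal.
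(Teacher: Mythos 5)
Your proposal is correct and follows essentially the same route as the paper: the paper's own proof simply observes that $D$ is a multi-graph whose $\sim_e$-classes are exactly the orbits and then invokes Lemma~\ref{lem01}, which is precisely your strategy. The only difference is that you spell out the verification that $D$ is a genuine multi-graph (separatrix closures adding only the two limit saddles, disjointness of distinct separatrices, finite local degree), details the paper leaves as a ``notice that.''
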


\begin{proof}
Notice that 
the multi-saddle connection diagram $D$ is a multi-graph 
such that 
the equivalent class of an element $x \in D$ by $\sim_e$ 
is an orbit. 
Applying Lemma \ref{lem01}, 
the restriction $D/v = D/\sim_e$ is desired. 
\end{proof}

We call $D/v$ the abstract multi-saddle connection diagram of $v$. 
Let $V_D$ be the set of multi-saddles 
and $E_D$ the set of separatrices. 
Define a label $l_{De} : E_{D} \to V_D$ 
by $l_{De} (O) :=  \alpha (O)$, 
where $\alpha (O)$ is the alpha limit set of an orbit $O$. 
Define a cyclic relation $\sim_c$ on $E_D^n$ 
as follows:  
$(O_1, O_2, \dots , O_n) \sim_c 
(O_{i_1}, O_{i_2}, \dots , O_{i_n})$ 
if there is an integer $k = 0, 1, \dots n-1$ such that 
$j - i_j \equiv k \mod n$. 
Denote by $E_D^* := \bigsqcup_{n \in \Z_{\geq 0}} E_D^n/\sim_c$ 
Define a label $l_{Dv} : V_{D} \to E_D^*$ as follows: 
by $l_{Dv} (x) :=  [O_1, \dots, O_{\deg x}]$ 
if $\{ O_1, \dots, O_{\deg x} \}$ is the set of separatrices 
containing $x$ 
and the separatrices $O_1, \dots, O_{\deg x}$ are arranged in the counterclockwise direction 
around $x$. 
Denote by $D_v := (D/v, l_{Dv}, l_{De})$ 
the abstract multi-saddle connection diagram $D/v$ with the label $( l_{Dv}, l_{De})$.  
Then the multi-saddle connection diagram $D$ of $v$ (as a plane graph) 
can be reconstructed by the finite deta $D_v$. 
%

\section{Separation axioms of orbit (class) spaces}

We characterize 
the embedded property of multi-saddle connection diagrams as follows.

\begin{lemma}\label{lem03}
Let $v$ be a flow 
on a compact surface $S$ 
and  $D$ the multi-saddle connection diagram. 
The quotient topology on $D/v$ is the Alexandroff topology of the poset $D/v$ 
if and only if 
$D$ is embedded as a multi-graph.  
\end{lemma}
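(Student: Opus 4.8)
The plan is to reduce the statement to the openness of the principal upsets of the poset $D/v$ and then to match that openness with the two local conditions in the definition of an embedded multi-graph. Write $p : D \to D/v$ for the canonical projection and recall from the preceding lemma that $D/v$, equipped with the specialization order $\leq_{\tau}$ of the quotient topology $\tau$, is a multi-graph-like poset; in particular it is $T_0$. Since in any $T_0$-space every open set is an upset (equivalently, by the remark in Section 2, every closed set is a downset with respect to $\leq_{\tau}$), the quotient topology is always coarser than the Alexandroff topology $\mathcal{A}(\leq_{\tau})$. Hence the two topologies coincide \iff $\mathcal{A}(\leq_{\tau}) \subseteq \tau$, and because every upset is the union $\bigcup_{x} \mathop{\uparrow} x$ of principal upsets while $\tau$ is closed under unions, this holds \iff each $\mathop{\uparrow} z$ is $\tau$-open, i.e. \iff $p^{-1}(\mathop{\uparrow} z)$ is open in $D$ for every $z \in D/v$.

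Next I would compute these preimages, using that $D$ is a multi-graph whose $\sim_e$-classes are orbits, so that $D/v = D/\sim_e$ has the multi-saddles as its vertices $P_0$ and the separatrices as its edges $P_1$. For an edge $[e] \in P_1$, which is maximal, $\mathop{\uparrow}[e] = \{[e]\}$ and $p^{-1}(\mathop{\uparrow}[e])$ is the open interval $e$. For a vertex $[x] \in P_0$, the relation $[x] \leq_{\tau} [e]$ means $x \in \overline{e}$, so $p^{-1}(\mathop{\uparrow}[x]) = \{x\} \cup \bigcup\{ e \mid x \in \overline{e}\}$ is the union of the multi-saddle $x$ with all separatrices incident to it. I would then record the elementary fact that a subset $A \subseteq D$ is open in the subspace topology \iff there is an open $U \subseteq S$ with $U \cap D = A$, which lets me pass freely between openness in $D$ and the neighbourhoods in $S$ appearing in the definition of an embedded multi-graph.

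Finally I would match these openness conditions with the defining conditions of embeddedness. The openness of $e = p^{-1}(\mathop{\uparrow}[e])$ in $D$ is exactly equivalent to the existence, for the edge $e$, of an open neighbourhood in $S$ meeting $D$ only inside $e$ (one assembles such a neighbourhood as the union of the witnessing neighbourhoods of the points of $e$), which is the edge condition for embeddedness. Likewise, once every edge is open, the openness in $D$ of $\{x\} \cup \bigcup\{e \mid x \in \overline{e}\}$ is equivalent to the existence of an open neighbourhood of $x$ in $S$ meeting no other vertex and no non-incident edge, which is the vertex condition. Thus $p^{-1}(\mathop{\uparrow} z)$ is open in $D$ for all $z$ \iff both conditions hold for all edges and all vertices, i.e. \iff $D$ is embedded; combined with the first paragraph this proves the lemma. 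I expect the main obstacle to be this last translation at a vertex: the definition of embeddedness permits the witnessing neighbourhood of $x$ to meet the incident separatrices, so one must first invoke the edge condition to see that each incident edge is interior in $D$ before concluding that the whole principal upset is open, and one must keep the incidence bookkeeping $x \in \overline{e}$ correct in the presence of loops (separatrices having $x$ as both limit sets).
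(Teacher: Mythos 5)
Your proof is correct and follows essentially the same route as the paper's: both identify the principal upsets of $D/v$ with a single edge, or with a vertex together with its incident edges, and translate their openness in the quotient (equivalently, openness of the saturated preimages in $D \subseteq S$) into the two local conditions defining an embedded multi-graph. The only cosmetic difference is that the paper proves the ``embedded $\Rightarrow$ Alexandroff'' direction by contradiction via a downset that fails to be closed, whereas you argue directly with open upsets; your closing remark that the edge condition must be invoked before the vertex upset can be seen to be open is precisely the point the paper's argument also uses when it first notes that each edge is open.
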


\begin{proof}
Let $Q := D/v$ be the poset with respect to the specialization order.
Suppose that 
the quotient topology on $Q$ is the Alexandroff topology of the poset $Q$. 
Then each edge of $Q$ is an open subset of $Q$  
and 
the upset $\mathop{\uparrow} w$ 
for any vertex $w$ of $Q$ 
is an open subset in $Q$  
which consists of $w$ and the edges incident to $w$. 
This implies that 
$D$ is embedded as a multi-graph. 
Conversely, 
suppose that $D$ is embedded as a multi-graph. 
Assume the quotient topology on $D/v$ is not the Alexandroff topology of the poset $D/v$. 
Note that 
each closed subset of $Q$ 
is a downset. 
Then there is a downset $A \subseteq Q$ such that 
$\overline{A} - A \neq \emptyset$. 
Since $D$ is embedded, 
each edge is open 
and so 
$\overline{A} - A$ consists of height $0$ elements. 
Therefore there is a vertex $w \in \overline{A} - A$. 
Since $\overline{A} - A \neq \emptyset$, 
any open neighborhood of $w$ 
intersects either vertices except $w$ or edges not incident to $w$,  
which contradicts to the definition of ``embedded''. 
\end{proof}

We characterize 
properness using quotient spaces.

\begin{lemma}\label{lem16}
Let $v$ be a non-trivial flow on a compact  surface $S$ and 
$p: S \to S/v$ the canonical projection. 
The set of $T_0$ points in $S/v$ 
is $p(\mathrm{Pr})$. 
\end{lemma}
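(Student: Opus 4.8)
The plan is to first translate the $T_0$ condition at a point of $S/v$ into a condition on orbit closures, and then to match that condition exactly with properness. Write $p(O)$ for the point of $S/v$ represented by an orbit $O$. Since the closed subsets of $S/v$ are precisely the images of the saturated closed subsets of $S$, and since the closure of a saturated set is saturated, the smallest closed set containing $p(O)$ is $\overline{\{p(O)\}} = p(\overline{O})$. Hence two points $p(O)$ and $p(O')$ are topologically indistinguishable (every open set contains both or neither) if and only if $\overline{\{p(O)\}} = \overline{\{p(O')\}}$, i.e. $\overline{O} = \overline{O'}$, which by definition means $O' \in \hat{O}$. By the definition of a $T_0$ point, $p(O)$ is a $T_0$ point of $S/v$ if and only if no orbit $O' \neq O$ is indistinguishable from it, that is, if and only if the orbit class satisfies $\hat{O} = O$. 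Thus the lemma reduces to the equivalence that $\hat{O} = O$ if and only if $O \subseteq \mathrm{Pr}$.

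For the direction that a proper orbit gives a $T_0$ point, suppose $O$ is proper. By definition there is an open $U \supseteq O$ with $O$ closed in $U$, so $O = \overline{O} \cap U$ is open in $\overline{O}$; consequently $\overline{O} \setminus O$ is a closed proper subset of $\overline{O}$. If $O'$ is any orbit with $\overline{O'} = \overline{O}$ and $O' \neq O$, then $O'$ is disjoint from $O$ and hence $O' \subseteq \overline{O} \setminus O$, so that $\overline{O'} \subseteq \overline{O} \setminus O \subsetneq \overline{O}$, contradicting $\overline{O'} = \overline{O}$. Therefore $\hat{O} = O$. This covers closed orbits (singular points and periodic orbits) and non-closed proper orbits alike.

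For the converse I would argue contrapositively, showing that a non-proper orbit $O$ satisfies $\hat{O} \supsetneq O$. Such an orbit lies in $\mathrm{LD} \sqcup \mathrm{E}$; since the stated decomposition identifies $\mathrm{P}$ with the set of orbits that are not weakly recurrent, $O$ is weakly recurrent and non-closed. Reversing the flow direction if necessary, assume $O \subseteq \omega(O)$, so that $Q := \overline{O} = \omega(O)$ is a quasi-minimal set, and it is nontrivial because $O$ is non-closed, so $O \subsetneq Q$. The key input is the classical structure theory of quasi-minimal sets on compact surfaces (see \cite{ABZ}): a nontrivial quasi-minimal set is the closure of each of a continuum of nontrivial recurrent orbits that are dense in it. Choosing such an orbit $O' \neq O$ yields $\overline{O'} = Q = \overline{O}$, hence $O' \in \hat{O}$ and $\hat{O} \neq O$, as required.

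The routine part is the first paragraph, which is pure point-set topology of the quotient, together with the proper direction, which only uses that a proper orbit is open in its closure. The main obstacle is the non-proper direction: it hinges on producing a second orbit with the same closure, which is exactly the assertion that a nontrivial quasi-minimal set carries more than one dense orbit. If one prefers not to cite the structure theorem, I would instead prove this by taking a local transversal $\Sigma$ through a point $x \in O$ with $x \in \omega(x)$, observing that $\overline{O} \cap \Sigma$ is closed and that the return points of $O$ accumulate on $x$ without exhausting $\overline{O} \cap \Sigma$ (since $O$ is non-proper), and then checking via the first-return map that any orbit meeting $\overline{O} \cap \Sigma$ near such an accumulation point is again dense in $Q$. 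This is the step where the non-proper hypothesis is genuinely used and where the care is required.
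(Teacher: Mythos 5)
Your proof is correct and follows essentially the same route as the paper: both directions rest on the same two inputs, namely the definition of properness (the orbit is closed in some neighborhood, hence open in its closure) for showing $p(\mathrm{Pr})$ consists of $T_0$ points, and Cherry's theorem that a nontrivial quasi-minimal set carries a continuum of dense recurrent orbits for the converse. The only difference is organizational: your observation that $\overline{O}\setminus O$ is closed lets you distinguish a proper orbit from an arbitrary orbit in one step, whereas the paper first reduces to pairs of orbits inside $\mathrm{P}$ via Proposition 2.2 of \cite{Y} and then runs a saturation argument; the mathematical content is the same.
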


\begin{proof}
Since a quasi-minimal set contains 
a continuum of nontrivially recurrent orbits each of  which is dense
in the quasi-minimal set (Theorem VI\cite{C}), 
the restriction 
$(\mathrm{E} \sqcup \mathrm{LD})/v = 
(\mathrm{E} \sqcup \mathrm{LD})/v_{\mathrm{ex}}$ consists of non-$T_0$ points. 
Since $\mathrm{Pr} =  \mathop{\mathrm{Cl}}(v) \sqcup \mathrm{P} 
= S -  (\mathrm{E} \sqcup \mathrm{LD})$, 
the set of $T_0$ points in $S/v$ is contained in $p(\mathrm{Pr})$. 
Proposition 2.2\cite{Y} implies that 
$\hat{O} = \overline{O} \setminus (\mathop{\mathrm{Sing}}(v) \sqcup \mathrm{P}) 
\subset \mathrm{LD} \sqcup \mathrm{E}$ for an orbit $O \subset \mathrm{LD} \sqcup \mathrm{E}$. 
Therefore $\hat{O} \subseteq \mathrm{Pr}$ for any orbit $O \subseteq \mathrm{Pr}$. 
Since the union of non-closed orbits in $\mathrm{Pr}$ is $\mathrm{P}$, 
we obtain 
$\hat{O} \subseteq \mathrm{P}$ for  an orbit $O \subseteq \mathrm{P}$. 
Therefore it suffices to show that 
either 
$\overline{O} \cap O' = \emptyset$ 
or 
$O \cap \overline{O'} = \emptyset$ 
for any non-closed orbits $O \neq O' \subset \mathrm{P}$. 
Indeed, 
since $O$ is proper, 
there is an open \nbd $U$ of $O$ with $O = \overline{O} \cap U$. 
Then the saturation $\mathrm{Sat}_v(U)$ is open 
with $O = \overline{O} \cap \mathrm{Sat}_v(U)$. 
Assume that $O' \subset \mathrm{Sat}_v(U)$. 
Then $O' \subset \mathrm{Sat}_v(U) \setminus \overline{O}$ 
and so 
$\overline{O} \cap O' = \emptyset$. 
Otherwise 
$O' \cap \mathrm{Sat}_v(U) = \emptyset$ 
and so 
$O \cap \overline{O'} = \emptyset$. 
\end{proof}

Note that 
there is a non-wandering flow $v$ 
on $\T^2$ with $\mathrm{P} \neq \emptyset$ 
whose multi-saddle connection diagram is dense 
and so is not embedded 
such that 
$\mathop{\mathrm{Sing}}(v)$ is countable (and so totally disconnected)  
and that 
$q(\mathrm{Pr})$ is not the set of $T_0$ points in $S/v_{\mathrm{ex}}$
(e.g. Example 2.10\cite{Y}), 
where $q: S \to  S/v_{\mathrm{ex}}$ is the canonical projection. 
Conversely, we have the following characterization if 
the set of singular points is finite.

\begin{lemma}
Let $v$ be a non-trivial flow on a compact  surface $S$ and 
$q: S \to  S/v_{\mathrm{ex}}$ the canonical projection. 
The set of $T_0$ points in $S/v_{\mathrm{ex}}$
is $q(\mathrm{Pr})$ if $|\mathop{\mathrm{Sing}}(v)| < \infty$. 
\end{lemma}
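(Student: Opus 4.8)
The plan is to adapt the proof of Lemma~\ref{lem16} to the extended setting, using the finiteness of $\mathop{\mathrm{Sing}}(v)$ to control the behavior of the multi-saddle connection diagram. The overall structure remains: I would show that the image of the non-proper part (the saturated set $\mathrm{E}\sqcup\mathrm{LD}$ of non-$T_0$ points) is disjoint from $q(\mathrm{Pr})$, and then show conversely that every extended orbit arising from a proper orbit maps to a $T_0$ point of $S/v_{\mathrm{ex}}$. The first inclusion is inherited almost verbatim from Lemma~\ref{lem16}: a quasi-minimal set still contains a continuum of recurrent dense orbits (Theorem~VI\cite{C}), so $(\mathrm{E}\sqcup\mathrm{LD})/v_{\mathrm{ex}}$ consists of non-$T_0$ points and the $T_0$ points are contained in $q(\mathrm{Pr})$.

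For the reverse inclusion the essential new input is that, when $|\mathop{\mathrm{Sing}}(v)|<\infty$, each multi-saddle connection is a \emph{finite} union of saddles and separatrices, hence a closed, embedded, proper subset of $S$ rather than a dense tangle. This is precisely the pathology that the Remark following Lemma~\ref{lem16} shows can occur when $\mathop{\mathrm{Sing}}(v)$ is merely countable: there the multi-saddle connection diagram is dense and $q(\mathrm{Pr})$ fails to be the $T_0$ set. So first I would record that finiteness of $\mathop{\mathrm{Sing}}(v)$ forces the multi-saddle connection diagram $D$ to be a finite multi-graph, and in particular each polycycle (multi-saddle connection) is a compact saturated set whose saturation neighborhoods can be taken to avoid any disjoint extended orbit. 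I would then split the proper extended orbits into two types: extended orbits that are ordinary orbits not contained in any multi-saddle connection, and extended orbits that are themselves multi-saddle connections.

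For an extended orbit $O_{\mathrm{ex}}(x)$ of the first type coming from a proper orbit $O\subseteq\mathrm{Pr}$, I would run the same separation argument as in Lemma~\ref{lem16}: properness gives an open saturated neighborhood $U$ with $O=\overline{O}\cap\mathrm{Sat}_v(U)$, and since $O$ meets no multi-saddle connection, $U$ can be chosen to miss $D$ entirely, so the extended-orbit quotient coincides locally with the ordinary orbit quotient and the argument that either $\overline{O_{\mathrm{ex}}}\cap O'_{\mathrm{ex}}=\emptyset$ or $O_{\mathrm{ex}}\cap\overline{O'_{\mathrm{ex}}}=\emptyset$ goes through. For an extended orbit of the second type, i.e.\ a multi-saddle connection $P$, finiteness makes $P$ compact and embedded; I would use an embedded saturated neighborhood (a polycycle has a flow-box neighborhood away from the other singular points) to separate $q(P)$ from any other extended orbit, again yielding the $T_0$ property at $q(P)$. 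Combining both types shows every point of $q(\mathrm{Pr})$ is $T_0$.

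The main obstacle I anticipate is the second type: verifying the $T_0$ separation at the image of a multi-saddle connection. Unlike an ordinary proper orbit, a polycycle may have nontrivial one-sided accumulation of periodic or proper orbits on it, and one must check that collapsing the whole connection to a point in $S/v_{\mathrm{ex}}$ does not create a specialization relation with a distinct extended orbit that the quotient topology cannot separate. Here I would lean on the finiteness hypothesis to guarantee the polycycle is isolated among multi-saddles and has a saturated neighborhood whose other extended orbits are proper, so that the saturation-neighborhood trick from Lemma~\ref{lem16} applies to the extended quotient. Carefully justifying that such a saturated, diagram-avoiding neighborhood exists — and that the extended identification does not glue $P$ to nearby orbits — is the delicate step, and is exactly where the countable counterexample in the Remark breaks down.
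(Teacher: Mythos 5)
Your proposal is correct and follows essentially the same route as the paper: the paper's entire proof is the observation that $|\mathop{\mathrm{Sing}}(v)|<\infty$ forces the multi-saddle connection diagram to consist of finitely many closed multi-saddle connections, after which Lemma \ref{lem16} gives the assertion. You identify exactly this key point (and correctly note it is what fails in the countable counterexample), merely spelling out in more detail the case analysis that the paper leaves implicit in the phrase ``Lemma \ref{lem16} implies the assertion.''
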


\begin{proof}
The finiteness of singular points implies that 
the multi-saddle connection diagram consists of 
finitely many multi-saddle connections each of which is closed. 
Then 
Lemma \ref{lem16} implies the assertion. 
\end{proof}

Recall that 
a flow is pointwise almost periodic if 
each orbit closure is minimal 
(i.e. $S/\hat{v}$ is $T_1$). 
By Lemma 2.2 \cite{Y2}, 
a flow $v$ on a compact surface is $R$-closed if and only if 
$S/\hat{v}$ is $T_2$. 
We characterize 
non-closed proper orbits using quotient spaces.

\begin{lemma}\label{lemma15-}
Let $v$ be a non-trivial flow on a compact connected surface $S$ and 
$p: S \to S/v$ (resp. $q: S \to  S/\hat{v}$) 
the canonical projection.   
The following are equivalent: 

1) 
$\mathrm{P} = \emptyset$. 

2) 
The set of $T_1$ points in $S/v$ 
is $p(S - \mathrm{LD})$  
$($i.e. each orbit is closed or locally dense$)$.

3) 
The set of $T_1$ points in $S/\hat{v}$ 
is $q(S - \mathrm{LD})$.  
\\
In any case, 
the flow $v$ is non-wandering 
such that   
$\mathop{\mathrm{Per}}(v)/v$ is a $1$-dimensional manifold 
and $S = \mathop{\mathrm{Cl}}(v) \sqcup \mathrm{LD}$. 
\end{lemma}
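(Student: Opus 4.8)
The plan is to reduce all three equivalences to the single implication $\mathrm{P} = \emptyset \Rightarrow \mathrm{E} = \emptyset$ by first computing the sets of $T_1$ (equivalently, closed, equivalently minimal) points of the two quotient spaces, and then to establish that implication, together with the remaining ``in any case'' assertions, by a transversal and first-return analysis. For the projection $p \colon S \to S/v$ the specialization order satisfies $p(O) \leq p(O')$ \iff $O \subseteq \overline{O'}$, since $\overline{\{p(O')\}} = p(\overline{O'})$ (the orbit closure is saturated and closed). Hence $p(O)$ is a $T_1$ point \iff $\overline{O} = O$, so that the $T_1$ points of $S/v$ are exactly $p(\mathop{\mathrm{Cl}}(v))$, \emph{with no hypothesis on $v$}. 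As $\mathop{\mathrm{Cl}}(v) \subseteq S - \mathrm{LD}$ and distinct orbits have distinct images, the identity $p(\mathop{\mathrm{Cl}}(v)) = p(S - \mathrm{LD})$ of statement 2) holds \iff $(S - \mathrm{LD}) \setminus \mathop{\mathrm{Cl}}(v) = \mathrm{P} \sqcup \mathrm{E} = \emptyset$, which is precisely the parenthetical $S = \mathop{\mathrm{Cl}}(v) \sqcup \mathrm{LD}$. A parallel computation of the minimal elements of the specialization order on $S/\hat{v}$, describing the classes $\hat{O}$ via Lemma \ref{lem16} and Proposition 2.2 \cite{Y} (so that for a non-closed orbit the closure of $\hat{O}$ strictly adds boundary or singular points and $\hat{O}$ fails to be closed), shows that statement 3) is likewise equivalent to $\mathrm{P} = \mathrm{E} = \emptyset$. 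Thus 2) $\iff$ 3) $\iff$ ($\mathrm{P} = \emptyset$ and $\mathrm{E} = \emptyset$), and it remains to prove that 1) alone forces $\mathrm{E} = \emptyset$.

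The easy half of the ``in any case'' clause comes first. Since $\mathrm{P}$ is the complement of the weakly recurrent points, $\mathrm{P} = \emptyset$ means $x \in \omega(x) \cup \alpha(x)$ for every $x$; if $x \in \omega(x)$ then for any \nbd $U \ni x$ and any $N$ there is $t > N$ with $v_t(x) \in U$, whence $v_t(x) \in v_t(U) \cap U$, and symmetrically in the $\alpha$-case, so every point is non-wandering and $v$ is non-wandering.

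The heart of the argument, and the step I expect to be the main obstacle, is $\mathrm{P} = \emptyset \Rightarrow \mathrm{E} = \emptyset$. Suppose there is an exceptional orbit $O$; then $Q := \overline{O}$ is a quasi-minimal set with empty interior, and Proposition 2.2 \cite{Y} together with $\mathrm{P} = \emptyset$ gives $Q \subseteq \mathrm{E} \sqcup \mathop{\mathrm{Sing}}(v)$. Fix a regular point $x \in O$ and a transversal segment $\Sigma$ at $x$. Then $Q \cap \Sigma$ is a closed, nowhere dense subset of $\Sigma$ invariant under the first-return map, and its complement contains a gap interval $I \subseteq \Sigma \setminus Q$ accumulating at $x$. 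Choosing a gap point $z \in I$, a Cherry--Maier type analysis of the first-return dynamics on $\Sigma$ pushes the forward and backward iterates of $z$ monotonically off $z$ while their orbits accumulate only on $Q$; hence $z \notin \omega(z) \cup \alpha(z)$, so $O(z)$ is a non-weakly-recurrent, and therefore proper non-closed, orbit, giving $z \in \mathrm{P}$ and contradicting $\mathrm{P} = \emptyset$. The delicate point is exactly this last step: verifying that a gap orbit is bi-asymptotic to $Q$ without meeting it and cannot itself become recurrent, where I would lean on the structure theory of quasi-minimal sets on surfaces (\cite{ABZ}, \cite{C})---equivalently, on the fact that the gap orbits of an exceptional set are wandering and so are excluded once $v$ is non-wandering. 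This yields $\mathrm{E} = \emptyset$, hence $S = \mathop{\mathrm{Cl}}(v) \sqcup \mathrm{LD}$, and completes the equivalences.

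Finally, for the claim that $\mathop{\mathrm{Per}}(v)/v$ is a $1$-dimensional manifold, I would work locally at a periodic orbit $O$ via a transversal $\Sigma$ and its first-return homeomorphism $f$, which fixes $c := O \cap \Sigma$. On a one-sided \nbd of $c$ the map $f$ is monotone; if on some side $f$ had no fixed point other than $c$ in a small interval, the corresponding orbits would have monotone, non-recurrent return iterates and hence be proper non-closed, i.e.\ lie in the empty set $\mathrm{P}$. Thus the fixed points of $f$ must fill an entire \nbd of $c$ (any gap between fixed intervals would again produce $\mathrm{P}$-orbits), so $f$ is the identity near $c$ and a full \nbd of $O$ consists of periodic orbits. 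Therefore $\mathop{\mathrm{Per}}(v)$ is open and locally a periodic annulus (or a half-open band along $\partial S$), so its orbit space is locally an interval, i.e.\ a $1$-dimensional manifold.
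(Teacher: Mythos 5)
Your reduction of all three conditions to the single implication $\mathrm{P}=\emptyset \Rightarrow \mathrm{E}=\emptyset$ is exactly the skeleton of the paper's argument (which cycles $1)\Rightarrow 2)\Rightarrow 3)\Rightarrow 1)$ after invoking that implication), and your computation of the $T_1$ points of $S/v$ as $p(\mathop{\mathrm{Cl}}(v))$, together with the class-space analogue, matches what the paper does implicitly. Your treatment of the final clause is genuinely different and more self-contained: the paper simply cites Corollary 2.9 of \cite{Y} to conclude that each component of $\mathop{\mathrm{Per}}(v)$ is the whole surface, an open annulus, or an open M\"obius band, whereas your first-return argument derives openness of $\mathop{\mathrm{Per}}(v)$ and the local product structure directly from $\mathrm{P}=\emptyset$. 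That argument is sound, with one small caveat: for a periodic orbit whose \nbd is a M\"obius band the return map $f$ exchanges the two sides of $c=O\cap\Sigma$, so it is $f^2$ rather than $f$ that is the identity near $c$; the quotient is then a half-open interval, which is still consistent with the conclusion.

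The genuine gap is in the step you yourself flag as delicate, namely $\mathrm{P}=\emptyset\Rightarrow\mathrm{E}=\emptyset$. The paper disposes of it in one line via Lemma 2.3 of \cite{Y}, which asserts that $\mathrm{P}\sqcup\mathrm{E}$ is a \nbd of $\mathrm{E}$; hence $\mathrm{P}=\emptyset$ would make $\mathrm{E}$ open, which is impossible for a nonempty union of orbits with nowhere dense closures. Your transversal argument does not reach this as written: you choose an arbitrary point $z$ in a gap of $Q\cap\Sigma$ and assert that its return iterates move monotonically off $z$ while its orbit accumulates only on $Q$, but on a higher-genus surface the complement of a quasi-minimal set may contain periodic orbits, locally dense orbits, or other quasi-minimal sets, and nothing in the sketch prevents $z$ from lying on one of these; in that case $z\notin\mathrm{P}$ and no contradiction arises. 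Moreover the first-return map on $\Sigma$ is only an interval-exchange-like partial map, so monotonicity of the iterates of a single gap point is not automatic. What you actually need is precisely the neighborhood statement above (equivalently, that every orbit sufficiently close to an exceptional quasi-minimal set fails to be weakly recurrent), which is a nontrivial piece of the Cherry--Ma\v \i er structure theory rather than a consequence of a single-gap analysis. Replacing your gap-point paragraph by a citation of Lemma 2.3 of \cite{Y} (or the corresponding statement in \cite{ABZ}) closes the proof; as it stands, that step is not established.
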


\begin{proof}
Write $Y := S - \mathrm{LD}$. 
Suppose that $\mathrm{P} = \emptyset$. 
By Lemma 2.3 \cite{Y}, 
there are no exceptional orbits 
and so 
$Y = S - \mathrm{LD} = \mathop{\mathrm{Cl}}(v) = \mathrm{Pr}$ 
is the union of closed orbits. 
This means that 
each orbit is closed or locally dense.  
Therefore the set of $T_1$ points in $S/v$ is $p(Y) = p(\mathrm{Pr})$. 
%
Suppose that 
the set of $T_1$ points in $S/v$ is $p(Y)$.  
This means that 
each orbit in $Y$ is closed. 
By the non-triviality of $v$, 
the flow $v$ is not minimal and so 
the set of $T_1$ points in $S/\hat{v}$ is $q(Y)$.  
Suppose that 
the set of $T_1$ points in $S/\hat{v}$ is $q(Y)$.  
This means that 
each orbit closure is minimal or quasi-minimal 
and so that 
$\mathrm{P} = \emptyset$. 
%
In any case, 
the flow $v$ is non-wandering. 
By Corollary 2.9 \cite{Y}, 
each connected component of $\mathop{\mathrm{Per}}(v)$ 
is either 
a connected component of $S$, 
an open annulus, 
or an open M\"obius band. 
Thus each connected component of 
$\mathop{\mathrm{Per}}(v)/v$ 
is either a circle or an interval and so 
$\mathop{\mathrm{Per}}(v)/v$ is a $1$-dimensional manifold. 
\end{proof}

In the case without locally dense orbits, 
the following statement holds.

\begin{corollary}\label{lemma15}
Let $v$ be a non-trivial flow on a compact connected surface $S$. 
The following are equivalent: 

1) 
$\mathrm{P} \sqcup \mathrm{LD} = \emptyset$. 

2) 
$S/v$ is $T_1$ $($i.e. each orbit is closed$)$.

3) 
$S/\hat{v}$ is $T_1$ $($i.e. $v$  is pointwise almost periodic$)$. 
\\
In any case, 
the flow $v$ is non-wandering 
with 
$S/v = S/\hat{v}$ and  
$(S- \mathop{\mathrm{Sing}}(v))/v$ is a $1$-dimensional manifold. 
\end{corollary}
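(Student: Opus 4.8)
The plan is to obtain the corollary as the specialization of Lemma~\ref{lemma15-} to the case $\mathrm{LD} = \emptyset$, exploiting the fact that once $\mathrm{P} = \emptyset$ the only remaining non-closed orbits are the locally dense ones. I would prove the cyclic chain $(1) \Rightarrow (2) \Rightarrow (3) \Rightarrow (1)$ and then read off the ``in any case'' clause. For $(1) \Rightarrow (2)$, assuming $\mathrm{P} \sqcup \mathrm{LD} = \emptyset$ gives $\mathrm{P} = \emptyset$, so Lemma~\ref{lemma15-} yields $S = \mathop{\mathrm{Cl}}(v) \sqcup \mathrm{LD}$; since $\mathrm{LD} = \emptyset$ this reads $S = \mathop{\mathrm{Cl}}(v) = \mathop{\mathrm{Sing}}(v) \sqcup \mathop{\mathrm{Per}}(v)$, so every orbit is a singular point or a periodic orbit, hence closed in $S$. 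As the preimage in $S$ of each point of $S/v$ is then a closed saturated set, every singleton of $S/v$ is closed, i.e.\ $S/v$ is $T_1$.

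For $(2) \Rightarrow (3)$ I would note that $S/v$ being $T_1$ means precisely that each orbit $O$ is closed, so $\overline{O} = O$ is either a singular point or a periodic orbit and is therefore a minimal set. Thus every orbit closure is minimal, which is by definition pointwise almost periodicity, i.e.\ $S/\hat{v}$ is $T_1$. The core of the argument is $(3) \Rightarrow (1)$, where I would rule out non-closed proper orbits and locally dense orbits from pointwise almost periodicity alone. Assume every orbit closure is a minimal set. If $\mathrm{P} \neq \emptyset$, pick a non-closed proper orbit $O$; properness gives an open set $U$ with $O = \overline{O} \cap U$, so $O$ is open in $\overline{O}$ and $\overline{O} \setminus O$ is a nonempty, closed, invariant, proper subset of the minimal set $\overline{O}$, a contradiction; hence $\mathrm{P} = \emptyset$. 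If $\mathrm{LD} \neq \emptyset$, pick a locally dense orbit $O$, so $W := \mathrm{int}\,\overline{O} \neq \emptyset$; since each $v_t$ is a homeomorphism preserving $\overline{O}$ it preserves $W$, so $W$ is open, invariant, and nonempty, and minimality of $\overline{O}$ forces $\overline{O} \setminus W = \emptyset$. Then $\overline{O} = W$ is open and closed in $S$, so by connectedness $\overline{O} = S$ and $v$ is minimal, contradicting non-triviality; hence $\mathrm{LD} = \emptyset$, giving $\mathrm{P} \sqcup \mathrm{LD} = \emptyset$.

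Finally, for the ``in any case'' clause, any of $(1)$--$(3)$ yields $S = \mathop{\mathrm{Cl}}(v)$, so every point is singular or periodic and $v$ is non-wandering; since $\overline{O} = O$ for every orbit we get $\hat{O} = O$ and hence $S/v = S/\hat{v}$; and $S - \mathop{\mathrm{Sing}}(v) = \mathop{\mathrm{Per}}(v)$, whose quotient $\mathop{\mathrm{Per}}(v)/v$ is a $1$-dimensional manifold by Lemma~\ref{lemma15-}. The main obstacle is the step $(3) \Rightarrow (1)$: extracting the absence of both orbit types from the minimality of orbit closures requires the precise meaning of ``proper'' (to make $O$ relatively open in $\overline{O}$) together with the invariance of $\mathrm{int}\,\overline{O}$ and connectedness of $S$, and one must be careful that it is exactly the non-triviality hypothesis that excludes the minimal (irrational rotation) case, in which the orbit closures are minimal yet locally dense orbits survive.
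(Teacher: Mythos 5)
Your proposal is correct and follows essentially the route the paper intends: the paper states this corollary without a separate proof, as the immediate specialization of Lemma~\ref{lemma15-} to the case $\mathrm{LD}=\emptyset$, which is exactly your framing. Your additional first-principles argument for $(3)\Rightarrow(1)$ (a non-closed proper orbit is relatively open in its closure, so its closure cannot be minimal; a locally dense orbit with minimal closure forces the closure to be clopen, hence, by connectedness and non-triviality, is excluded) is sound and merely fills in details the paper leaves implicit.
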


Note that 
there is a non-wandering flow $v$ with $\mathop{\mathrm{P}} \neq \emptyset$ 
and $(S- \mathop{\mathrm{Sing}}(v))/v \cong \S^1$. 
Indeed, 
replacing an periodic orbit of an rational rotation on a torus $\T^2$ to 
a $0$-saddle with a homoclinic separatrix, 
the resulting flow $v$ on $\T^2$ is desired. 
%
%
By dimension, 
we mean 
the small inductive dimension. 
By Urysohn's theorem, 
the Lebesgue covering dimension, 
the small inductive dimension, 
and 
the large inductive dimension are corresponded in normal spaces. 
%
A compact metrizable space $X$ 
whose inductive dimension is $n > 0$ 
is an $n$-dimensional Cantor-manifold 
if 
the complement $X - L$ for any closed subset $L$ of $X$ 
whose inductive dimension is less than $n - 1$ 
is connected.  
It's known that 
a compact connected manifold is a Cantor-manifold \cite{HM,T}. 
We characterize the Hausdorff property of the orbit (class) spaces. 
The orientable case of the following result has stated in \cite{Y2} 
(see Theorem 6.6 in the paper).

\begin{lemma}
Let $v$ be a non-trivial flow on a compact connected surface $S$ 
whose orbit space $S/v$ is $T_1$. 
The following are equivalent: 

1) 
$\mathop{\mathrm{Sing}}(v)$ consists of at most two topological centers. 

2) 
$S/v$ is $T_2$. 

3) 
$S/\hat{v}$ is $T_2$ $($i.e. $v$  is $R$-closed$)$. 
\\
In any case, 
the Euler characteristic of $S$ is non-negative and 
the orbit space $S/v$ is 
either an closed interval or a circle. 
\end{lemma}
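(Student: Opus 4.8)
The plan is to extract the global structure from Corollary~\ref{lemma15} and then reduce everything to the analysis of the two ends of a single annulus. Since $S/v$ is $T_1$, Corollary~\ref{lemma15} gives $\mathrm{P}\sqcup\mathrm{LD}=\emptyset$, so every orbit is closed, $S=\mathop{\mathrm{Sing}}(v)\sqcup\mathop{\mathrm{Per}}(v)$, the flow is non-wandering, and $S/v=S/\hat v$; in particular 2) and 3) are equivalent because the two quotient spaces literally coincide. Moreover, by Corollary 2.9~\cite{Y} (as used in the proof of Lemma~\ref{lemma15-}) each connected component of $\mathop{\mathrm{Per}}(v)$ is all of $S$, an open annulus, or an open M\"obius band, in accordance with the fact, already in Corollary~\ref{lemma15}, that $\mathop{\mathrm{Per}}(v)/v=(S-\mathop{\mathrm{Sing}}(v))/v$ is a $1$-manifold. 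Finally I record the reduction that makes condition 1) quantitative: since every orbit is closed, any point isolated in $\mathop{\mathrm{Sing}}(v)$ has a saturated neighborhood consisting of closed orbits and is therefore a topological center, so condition 1) is equivalent to the purely numerical statement $|\mathop{\mathrm{Sing}}(v)|\le 2$.

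For 2)$\Rightarrow$1) suppose $S/v$ is $T_2$. The saturation of an open set is open, so $p$ is an open map; hence $S/v$ is second countable, and being compact Hausdorff it is metrizable. Its diagonal is closed, so the orbit relation $R=(p\times p)^{-1}(\Delta_{S/v})$ is closed in $S\times S$. First I would show $\mathop{\mathrm{Sing}}(v)$ has empty interior: an open set of fixed points is bordered by periodic orbits whose images accumulate onto an entire circle of singular images, producing a convergent sequence in $S/v$ with several distinct limits, which a Hausdorff space forbids. Hence every singular point lies in the limit set of the shrinking circles at an end of some annulus (or M\"obius) component $A\cong\mathbb{S}^1\times(0,1)$. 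Closedness of $R$ then forces any such end limit set meeting $\mathop{\mathrm{Sing}}(v)$ to be a single point: if it contained $a\neq b$, points $x_n\to a$ and $y_n\to b$ on a common circle $C_n$ would give $(a,b)\in R$, impossible for two distinct singular points. Thus each singular point is the whole shrink-limit of the concentric circles around it, so it is isolated; as $\mathop{\mathrm{Sing}}(v)$ is closed in the compact $S$ it is finite, and each of its points is a topological center.

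To pass from finiteness to the bound $\le 2$ I would invoke the Cantor-manifold property recalled just before the lemma. Now $\mathop{\mathrm{Sing}}(v)$ is a finite, hence zero-dimensional, closed set with $\dim\mathop{\mathrm{Sing}}(v)=0<1=\dim S-1$, so deleting it from the $2$-dimensional Cantor-manifold $S$ leaves a connected set; that is, $\mathop{\mathrm{Per}}(v)$ is connected and is therefore a single open annulus, open M\"obius band, or all of $S$. Such a piece has at most two ends, each capped by a single center, by a M\"obius core, or by a boundary orbit of $\partial S$, so $|\mathop{\mathrm{Sing}}(v)|\le 2$ and $S/v$ is the corresponding compactified base, a closed interval or a circle. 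The same Cantor-manifold step yields 1)$\Rightarrow$2): with $\mathop{\mathrm{Sing}}(v)$ finite and of size $\le 2$, $\mathop{\mathrm{Per}}(v)$ is again a single annulus/M\"obius/whole-$S$ whose ends are cleanly capped by the (at most two) centers and by boundary orbits, so $S/v$ is a closed interval or a circle and in particular $T_2$. For the remaining assertions, $S/v$ is a closed interval or circle by the above, and realizing $S$ as a circle family over this base with a disk glued in at each center endpoint gives $\chi(S)=|\mathop{\mathrm{Sing}}(v)|\in\{0,1,2\}\ge 0$.

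The step I expect to be the main obstacle is the rigorous form of the non-Hausdorff mechanism in the second paragraph: converting ``circles shrinking onto a nondegenerate singular set'' into an honest failure of uniqueness of limits, equivalently into a pair $(a,b)\in R$ with $a\neq b$ both singular. This is precisely where the annulus normal form of Corollary 2.9~\cite{Y} is indispensable, since the product structure $\mathbb{S}^1\times(0,1)$ is what lets me choose a single circle $C_n$ carrying points that converge to two prescribed points of the end limit set; without such a normal form the two limits need not be realized on a common orbit, and the contradiction with the closedness of $R$ would not be available.
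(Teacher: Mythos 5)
Your overall skeleton matches the paper's: Corollary \ref{lemma15} to get $S=\mathop{\mathrm{Cl}}(v)$ and the equivalence of 2) and 3), Corollary 2.9 of \cite{Y} for the annulus/M\"obius structure of the components of $\mathop{\mathrm{Per}}(v)$, the Cantor-manifold property of $S$ to get connectedness of $S-\mathop{\mathrm{Sing}}(v)$, and a Poincar\'e--Hopf count for $\chi(S)\ge 0$. The direction 1)$\Rightarrow$2) and the final assertions are essentially the paper's argument. The problem lies in the direction 2)$\Rightarrow$1), where you replace the paper's local disconnection argument by the ``closed orbit relation plus shrinking circles'' mechanism; this is where there are genuine gaps.

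First, your justification that $\mathop{\mathrm{Sing}}(v)$ has empty interior does not work as stated: the frontier of an open set of fixed points consists of fixed points (since $\mathop{\mathrm{Sing}}(v)$ is closed), not of periodic orbits, so there is no a priori ``circle of singular images'' accumulated on by periodic orbits. Second, and more seriously, from ``each end limit set of each component of $\mathop{\mathrm{Per}}(v)$ is a single point'' you conclude that each singular point is isolated. This does not follow: a singular point need not lie in the end limit set of any single annular component. It can be an accumulation point of singular points each of which is the end limit of a different component (a priori there may be infinitely many components), or a limit of periodic points drawn from infinitely many distinct components; in that situation no single circle $C_n$ carries two points converging to two prescribed singular limits, and the closedness of $R$ yields nothing. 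This is exactly the configuration the paper excludes with its $\overline{U}=W_1\cup W_2$, $W_1\cap W_2=\{x\}$ disconnection argument at a putative non-isolated $x\in\partial C$ (a local Cantor-manifold argument), and, in the related Lemma \ref{lem3-10}, with the connected limit set $X_x$ of a sequence of periodic orbits; you would need one of these devices, or an equivalent, to close the gap. A smaller unproved side claim is that, when every orbit is closed, an isolated singular point automatically has a saturated neighborhood consisting of closed orbits and hence is a topological center; you use this to reduce condition 1) to $|\mathop{\mathrm{Sing}}(v)|\le 2$, but it is not needed if one argues as the paper does.
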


\begin{proof}
Let $v$ be a non-trivial flow on a compact connected surface $S$ 
whose orbit space $S/v$ is $T_1$.  
This means that $S = \mathop{\mathrm{Cl}}(v)$. 
Corollary \ref{lemma15} implies that 
the conditions 2) and 3) are equivalent. 
Since $v$ is non-trivial, 
there is a periodic orbit $O$. 
Let $C$ be the connected component of $\mathop{\mathrm{Per}}(v)$ 
which contains $O$. 
As above, 
Corollary 2.9 \cite{Y} implies 
$C$
is 
either 
the whole surface $S$, 
an open annulus,  
or an open M\"obius band. 
Then the restriction $C/v$ is an interval or a circle. 
Suppose that 
$\mathop{\mathrm{Sing}}(v)$ consists of at most two topological centers. 
Then the dimension of $\mathop{\mathrm{Sing}}(v)$ is zero 
and 
each connected component of the boundary $\partial C$
is a topological center, 
where $\partial C := \overline{C} - \mathrm{int} C$. 
Since $S$ is a $2$-dimensional Cantor-manifold, 
the complement $S - \mathop{\mathrm{Sing}}(v)$ is connected 
and so 
$S = C \sqcup \mathop{\mathrm{Sing}}(v)$. 
By the Poincar\'e-Hopf index formula, 
the Euler characteristic of $S$ is non-negative. 
Since the restriction $C/v$ is an interval or a circle, 
the orbit space $S/v$ is 
either an closed interval or 
a circle. 
Conversely, 
suppose that $S/v$ is $T_2$.  
Since the restriction $C/v$ is an interval or a circle,  
each connected component of the boundary $\partial C = \overline{C} - C$ is 
an closed orbit. 
Therefore the boundary 
$\partial C$ consists of at most two singular points. 
We show that each point in $\partial C$ is a center. 
Otherwise 
there is a non-isolated singular point $x \in \partial C$. 
Fix an open small \nbd $U$ of $x$ whose closure is homeomorphic to a closed ball 
such that $\overline{U} \cap \partial C = \{ x \}$. 
Since $x$ is not isolated, 
we obtain $(U \setminus \overline{C}) \cap \mathop{\mathrm{Sing}}(v) \neq \emptyset$. 
Then $V := U \setminus \overline{C} = U \setminus (C \sqcup \{ x \})$ is a nonempty open subset 
with $\overline{V} \cap \overline{C} = \{ x \}$ and $\overline{U} \subseteq \overline{V} \cup \overline{C}$. 
Put $W_1 := \overline{U} \cap \overline{C} = \overline{U} \cap (C \sqcup \{ x \})$ 
and $W_2 := \overline{V} \subseteq \overline{U}$. 
By construction, 
we obtain $\mathrm{int} W_{1} \neq \emptyset$ and $\mathrm{int} W_{2} \neq \emptyset$. 
Since $U - \{ x \} = (U \cap C) \sqcup V$ and $\overline{V} \cap \overline{C} = \{ x \}$, 
we have $\overline{U} = W_1 \cup W_2$ and $W_1 \cap W_2 = \{ x \}$. 
This implies that 
$\overline{U} - \{ x \} = (W_1 - \{ x \}) \sqcup (W_2 - \{ x \})$ is disconnected, 
which contradicts that $\overline{U}$ is a two-dimensional Cantor manifold. 
Thus 
%
$\partial C$ consists of 
at most two singular points which are topological centers 
and so $\overline{C} = S$. 
This means that 
$C = \mathop{\mathrm{Per}}(v)$ 
and so $\partial C = \partial \mathop{\mathrm{Per}}(v) = \mathop{\mathrm{Sing}}(v)$. 
%
\end{proof}

The non-triviality of a flow implies that 
the extended orbit class space is not a singleton.

\begin{lemma}\label{lem03-08}
Let $v$ be a flow on a compact connected surface $S$. 
Then $S/\hat{v}_{\mathrm{ex}}$ is a singleton if and only if $v$ is minimal. 
\end{lemma}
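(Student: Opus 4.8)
The plan is to prove both implications by directly unwinding the definition of an extended class, the key leverage being the dimension gap between the at-most-one-dimensional multi-saddle connection diagram and the two-dimensional surface $S$. Recall that an extended class is, by definition, either a multi-saddle connection or an orbit class $\hat{O} = \{ y \in S \mid \overline{O(y)} = \overline{O} \}$ not contained in any multi-saddle connection, and that the quotient map $S \to S/\hat{v}_{\mathrm{ex}}$ is surjective, so a singleton target corresponds to a single extended class equal to $S$ as a subset.

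For the ``if'' direction I would first observe that a minimal flow has no singular points: a singular point $p$ would make $\{ p \}$ a nonempty closed invariant proper subset of $S$, contradicting that every orbit is dense. Hence the multi-saddle connection diagram $D$ is empty, so every extended orbit is an ordinary orbit and every extended class is an ordinary orbit class. Since every orbit is dense, $\overline{O(y)} = S = \overline{O}$ for all $y$, so the single orbit class $\hat{O}$ equals $S$ and $S/\hat{v}_{\mathrm{ex}}$ is a singleton.

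For the ``only if'' direction, suppose $S/\hat{v}_{\mathrm{ex}}$ is a singleton, so the unique extended class, viewed as a subset of $S$, is all of $S$. I would rule out the multi-saddle-connection alternative by dimension: a multi-saddle connection is a connected subset of $D$, which is a multi-graph (as established in the proof of the earlier lemma identifying $D/v$ as a multi-graph-like poset) and hence a cell complex of dimension at most one; since a compact connected surface is two-dimensional, no multi-saddle connection can equal $S$. Therefore the unique extended class is an orbit class $\hat{O} = S$. Because $\hat{O} \subseteq \overline{O}$ always holds (each $y \in \hat{O}$ satisfies $y \in \overline{O(y)} = \overline{O}$), we get $S = \hat{O} \subseteq \overline{O} \subseteq S$, so $\overline{O} = S$; then the defining relation $\overline{O(y)} = \overline{O} = S$ for every $y$ says exactly that every orbit is dense, i.e. $v$ is minimal.

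The main obstacle is the single step excluding the possibility that the whole surface is one multi-saddle connection; everything else is a mechanical application of the definitions. This obstacle is dispatched purely by the dimension comparison, for which I only need that a multi-saddle connection sits inside the one-dimensional diagram $D$ — already available from the earlier structural lemma — so that no finiteness hypothesis on the singular set is required.
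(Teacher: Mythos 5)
Your proof is correct, but it takes a genuinely different and more economical route than the paper's. The paper argues the converse dynamically: from the singleton hypothesis it deduces that every extended orbit is dense, hence $\mathop{\mathrm{Per}}(v)=\emptyset$ by openness of $\mathop{\mathrm{Per}}(v)$, that every singular point is a multi-saddle, hence (multi-saddles being isolated) $|\mathop{\mathrm{Sing}}(v)|<\infty$, so the multi-saddle connection diagram is closed and therefore empty, whence $\mathrm{Pr}=\emptyset$, $S=\mathrm{LD}$, and finally minimality via the structure of locally dense orbits on $\T^2$. You instead work purely from the definition of the quotient: the unique extended class equals $S$, it cannot be a multi-saddle connection for dimension reasons, so it is an orbit class $\hat{O}=S$, and then $\hat{O}\subseteq\overline{O}$ forces $\overline{O(y)}=S$ for every $y$. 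This bypasses all of the background machinery (openness of $\mathop{\mathrm{Per}}(v)$, the classification of singular points, Ma\v{\i}er-type finiteness, and the $S=\mathrm{LD}$ step), and in exchange the whole weight rests on the single exclusion step. That step is sound: even when the diagram has infinitely many multi-saddles (so that the ``cell complex of dimension at most one'' description from the earlier lemma is somewhat loose, and the diagram can even be dense in $S$), the diagram is a countable union of isolated singular points and of closures of separatrices, each of which is a compact set with empty interior, so by Baire category it cannot exhaust a compact surface; phrasing the exclusion via Baire rather than via the cell-complex dimension would make your argument independent of any implicit finiteness in the earlier lemma. One further point worth making explicit is that the extended classes genuinely partition $S$ (an orbit class not contained in any multi-saddle connection is in fact disjoint from the diagram, since an orbit sharing its closure with a saddle or a saddle-to-saddle separatrix must itself lie in that connection), so that a singleton quotient really does mean the unique class is all of $S$; the paper assumes this tacitly as well, so it is not a gap relative to the paper's standard of rigor.
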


\begin{proof}
The minimality of $v$ implies that $S/\hat{v}_{\mathrm{ex}}$ is a singleton. 
Conversely, suppose that $S/\hat{v}_{\mathrm{ex}}$ is a singleton.  
The openness of  $\mathop{\mathrm{Per}}(v)$ implies $\mathop{\mathrm{Per}}(v) = \emptyset$. 
Since each extended orbit is dense,  
each singular point is a multi-saddle.  
Since multi-saddles are isolated, 
we have $|\mathop{\mathrm{Sing}}(v)| < \infty$. 
This means that each multi-saddle connection is closed 
and so the multi-saddle connection diagram is empty. 
Therefore $\mathrm{Pr} = \emptyset$ and so 
$S = \mathrm{LD}$. 
This implies $\T^2 = \mathrm{LD}$ and so $v$ is minimal. 
\end{proof}

\begin{lemma}\label{lem03-09}
Let $v$ be a non-trivial flow on a compact connected surface $S$. 
If $S/\hat{v}_{\mathrm{ex}}$ is $T_1$, 
then $\mathrm{LD} = \emptyset$. 
\end{lemma}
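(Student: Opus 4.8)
The plan is to turn the topological hypothesis into a closedness statement about extended classes and then show that a single locally dense orbit would force $v$ to be minimal, contradicting non-triviality. Since $S/\hat{v}_{\mathrm{ex}}$ carries the quotient topology, a singleton $\{q(x)\}$ is closed exactly when its preimage, namely the extended class $\hat{O}_{\mathrm{ex}}(x)$ viewed as a subset of $S$, is closed in $S$; hence $S/\hat{v}_{\mathrm{ex}}$ being $T_1$ is equivalent to the assertion that every extended class is a closed subset of $S$. I would record this reformulation first, and then argue by contradiction, assuming $\mathrm{LD} \neq \emptyset$ and fixing an orbit $O \subseteq \mathrm{LD}$.

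The next step is to identify the relevant extended class. A locally dense orbit is weakly recurrent, whereas every regular orbit contained in the multi-saddle connection diagram is a separatrix joining multi-saddles, so both its limit sets lie in $\mathop{\mathrm{Sing}}(v)$ and it is not weakly recurrent. Consequently $O$ meets no multi-saddle connection, the extended class through $O$ coincides with the ordinary class, $\hat{O}_{\mathrm{ex}} = \hat{O}$, and by the $T_1$ hypothesis this class is closed in $S$. I would then invoke Proposition 2.2\cite{Y} (used already in the proof of Lemma \ref{lem16}), giving $\hat{O} = \overline{O} \setminus (\mathop{\mathrm{Sing}}(v) \sqcup \mathrm{P})$ for $O \subseteq \mathrm{LD} \sqcup \mathrm{E}$. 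Because $O \subseteq \hat{O} \subseteq \overline{O}$ we have $\overline{\hat{O}} = \overline{O}$, so the closedness of $\hat{O}$ forces $\hat{O} = \overline{O}$. Thus $\overline{O} \cap (\mathop{\mathrm{Sing}}(v) \sqcup \mathrm{P}) = \emptyset$ and every point of $\overline{O}$ has orbit closure equal to $\overline{O}$; in other words, the quasi-minimal set $\overline{O}$ (the closure of the weakly recurrent orbit $O$, which contains a continuum of dense orbits by Theorem VI\cite{C}) is in fact a minimal set.

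Finally I would use local density: $W := \mathrm{int}\,\overline{O} \neq \emptyset$. Since each $v_t$ is a homeomorphism, the interior of an invariant set is invariant, so $W$ is a nonempty open saturated subset of the minimal set $\overline{O}$. Any boundary point of $\overline{O}$ would have a dense orbit meeting the saturated open set $W$ and would therefore lie in $W$, which is impossible; hence $\overline{O}$ has empty boundary and is clopen. As $S$ is connected, $\overline{O} = S$, so $v$ is minimal, contradicting non-triviality (equivalently, $\overline{O} = S$ makes $S/\hat{v}_{\mathrm{ex}}$ a singleton, and Lemma \ref{lem03-08} yields minimality directly). This contradiction shows $\mathrm{LD} = \emptyset$.

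I expect the main obstacle to be the middle step: combining Proposition 2.2\cite{Y} with the $T_1$-closedness of $\hat{O}$ to eliminate every singular point and every non-closed proper orbit from $\overline{O}$, thereby upgrading the quasi-minimal set to a genuine minimal set. Once that is secured, the surface-topology conclusion is routine, relying only on invariance of the interior and connectedness of $S$.
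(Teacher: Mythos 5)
Your proof is correct, but it follows a genuinely different route from the paper's. The paper assumes $\mathrm{LD} \neq \emptyset$, invokes Lemmas 2.1 and 2.3 of \cite{Y2} to get $\emptyset \neq \overline{\mathrm{LD}} - \mathrm{LD} \subseteq \mathop{\mathrm{Sing}}(v) \sqcup \mathrm{P}$, and then argues that the $T_1$ hypothesis forces every singular point of $\overline{\mathrm{LD}}$ to lie in a dense extended orbit and hence to be a multi-saddle; the contradiction comes from the fact that these multi-saddles must accumulate at a non-isolated singular point, which cannot itself be a multi-saddle. You instead localize the argument to a single locally dense orbit $O$: after checking that $O$ is not a saddle connection (so $\hat{O}_{\mathrm{ex}} = \hat{O}$), you combine Proposition 2.2 of \cite{Y} --- which the paper already uses in the proof of Lemma \ref{lem16} --- with $T_1$-closedness to force $\hat{O} = \overline{O}$, upgrading the quasi-minimal set to a minimal set with nonempty interior, which by invariance of the interior and connectedness of $S$ must be all of $S$, contradicting non-triviality (or Lemma \ref{lem03-08}). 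What your approach buys is economy: it avoids importing the boundary description $\overline{\mathrm{LD}} - \mathrm{LD} \subseteq \mathop{\mathrm{Sing}}(v) \sqcup \mathrm{P}$ from \cite{Y2} and the somewhat delicate accumulation argument for multi-saddles, replacing them with an elementary clopen argument. What the paper's route buys is structural information along the way (constraints on which singular points can sit on $\partial \mathrm{LD}$), which is in the spirit of its later lemmas. The only points you should state explicitly in a final write-up are the two you already flag: that $T_1$ of the quotient is equivalent to closedness of every extended class, and that a locally dense orbit is weakly recurrent and therefore not a separatrix connecting multi-saddles, so its extended class really is its ordinary class.
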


\begin{proof}
Assume that $\mathrm{LD} \neq \emptyset$. 
By Lemma \ref{lem03-08}, 
the extended orbit class space $S/\hat{v}_{\mathrm{ex}}$ is not 
a singleton. 
By Lemma 2.1 and 2.3 \cite{Y2}, 
we have $\emptyset \neq \overline{\mathrm{LD}} - \mathrm{LD} \subseteq 
\mathop{\mathrm{Sing}}(v) \sqcup \mathrm{P}$. 
By the finiteness of quasi-minimal sets, 
since $S/\hat{v}_{\mathrm{ex}}$ is $T_1$, 
the intersection $\overline{\mathrm{LD}} \cap (\mathop{\mathrm{Sing}}(v) \sqcup \mathrm{P})$ 
consists of dense extended orbits. 
In particular, 
each singular point in $\overline{\mathrm{LD}}$ is a multi-saddle. 
Moreover, 
there are countable many multi-saddles in $\overline{\mathrm{LD}}$ 
and so there is a non-isolated singular point in $\overline{\mathrm{LD}}$ 
which is not a multi-saddle, which contradicts to the non-existence. 
\end{proof}

Second we consider quotient spaces with respect to the extended orbits 
to encode non-wandering surface flows. 
For a subset $A$, write $\delta A := A - \mathrm{int} A$.  

\begin{lemma}\label{lem15a}
Let $v$ be a non-trivial flow on a compact connected surface $S$. 
The following are equivalent: 

1) 
$\mathrm{LD} = \emptyset$, 
the multi-saddle connection diagram contains $\mathrm{P}$, and 
each multi-saddle connection is closed.  

2) 
$S/v_{\mathrm{ex}}$  is $T_1$.

3) 
$S/\hat{v}_{\mathrm{ex}}$  is $T_1$. 

In any case, 
the flow $v$ is non-wandering with 
$S/\hat{v}_{\mathrm{ex}} = S/v_{\mathrm{ex}}$. 
\end{lemma}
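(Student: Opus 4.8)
The plan is to read $T_1$ of a quotient as a statement about fibres and then to run the cycle $1)\Rightarrow 2)\Rightarrow 3)\Rightarrow 1)$, dealing with the coincidence $S/\hat{v}_{\mathrm{ex}}=S/v_{\mathrm{ex}}$ and the non-wandering conclusion at the end. For the canonical projections $S\to S/v_{\mathrm{ex}}$ and $S\to S/\hat{v}_{\mathrm{ex}}$, a one-point set is closed precisely when its fibre is a closed saturated subset of $S$; hence $S/v_{\mathrm{ex}}$ (resp. $S/\hat{v}_{\mathrm{ex}}$) is $T_1$ if and only if every extended orbit (resp. extended class) is closed in $S$. Since, by definition, both the extended orbit and the extended class of a point lying on a multi-saddle connection equal that connection, the $T_1$ hypothesis in 2) or in 3) makes every multi-saddle connection closed at once, which already supplies the third clause of 1) in both reverse directions.

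First I would establish $1)\Rightarrow 2)$. Under 1) the extended orbits other than multi-saddle connections are the single orbits off the diagram $D$; as $\mathrm{P}\subseteq D$ and $\mathrm{LD}=\emptyset$, such an orbit lies in $\mathop{\mathrm{Sing}}(v)\sqcup\mathop{\mathrm{Per}}(v)\sqcup\mathrm{E}$. Singular points and periodic orbits are closed, so it suffices to exclude exceptional orbits: if $O\in\mathrm{E}$, then by the structure of quasi-minimal sets (Theorem VI \cite{C}, together with the relation between $\mathrm{E}$ and $\mathrm{P}$ in \cite{Y}) there is a proper non-closed orbit accumulating on $\overline{O}$ whose $\omega$- or $\alpha$-limit set is the quasi-minimal set $\overline{O}$ rather than a singular point; such an orbit is not a separatrix and hence lies in $\mathrm{P}\setminus D$, contradicting $\mathrm{P}\subseteq D$. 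Thus $\mathrm{E}=\emptyset$, every extended orbit is closed, and 2) holds. The implication $2)\Rightarrow 3)$ is then formal: if every extended orbit is closed then it equals its own closure, so any extended orbit sharing that closure coincides with it; hence extended classes collapse onto extended orbits, giving $S/\hat{v}_{\mathrm{ex}}=S/v_{\mathrm{ex}}$ and in particular 3).

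For $3)\Rightarrow 1)$, Lemma \ref{lem03-09} yields $\mathrm{LD}=\emptyset$ at once. I would then rerun the exceptional-orbit argument at the level of classes: the extended class of an exceptional orbit, or of a proper non-closed orbit accumulating on a quasi-minimal set, is not closed — the class of a proper orbit is the orbit itself by the dichotomy established in the proof of Lemma \ref{lem16}, and is therefore a non-closed set — which contradicts $T_1$; this forces $\mathrm{E}=\emptyset$ and $\mathrm{P}\subseteq D$. Together with the closedness of every multi-saddle connection observed above, this is exactly 1). The `in any case' coincidence $S/\hat{v}_{\mathrm{ex}}=S/v_{\mathrm{ex}}$ has already been recorded. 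For non-wandering, the non-wandering set $\Omega$ is closed and contains $\mathop{\mathrm{Cl}}(v)=\mathop{\mathrm{Sing}}(v)\sqcup\mathop{\mathrm{Per}}(v)$, so, using $S=\mathop{\mathrm{Cl}}(v)\sqcup\mathrm{P}$ and $\mathrm{P}\subseteq D$, the open set $S\setminus\Omega$ is contained in $D$. As every multi-saddle is an isolated singular point, there are at most countably many of them, hence countably many separatrices and countably many multi-saddle connections, each of which is by 1) a closed $1$-dimensional, hence nowhere dense, subset of $S$. Thus $D$ is a countable union of nowhere dense closed sets, so by the Baire category theorem $\mathrm{int}\,D=\emptyset$; since $S\setminus\Omega$ is open and contained in $D$, it is empty and $v$ is non-wandering.

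The main obstacle is the exceptional-orbit exclusion: converting `$\mathrm{E}\neq\emptyset$' into a concrete proper non-closed orbit lying outside the multi-saddle connection diagram is where the surface-specific theory of quasi-minimal sets (Cherry's theorem and the companion results of \cite{Y}) carries the weight, and one must be careful that this orbit's limit set is the quasi-minimal set and not a singular point, so that it genuinely fails to be a separatrix. The remaining steps are either purely topological identifications of fibres with extended orbits and classes or the short Baire-category argument above.
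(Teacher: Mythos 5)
Your proposal is correct and follows essentially the same route as the paper: the same three implications, with Lemma \ref{lem03-09} supplying $\mathrm{LD}=\emptyset$, the properness dichotomy from Lemma \ref{lem16} identifying classes of proper orbits with the orbits themselves, and the exclusion of exceptional orbits resting on Lemma 2.3 of \cite{Y} (that $\mathrm{P}\sqcup\mathrm{E}$ is a neighborhood of $\mathrm{E}$) together with the countability of the multi-saddle connection diagram. The only cosmetic differences are that the paper deduces non-wandering directly from the density of closed orbits in $S=\mathop{\mathrm{Cl}}(v)\sqcup\delta\mathrm{P}$ rather than via your Baire-category argument, and cites Lemma 2.3 \cite{Y} outright where you reconstruct the exceptional-orbit exclusion by hand.
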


\begin{proof}
By the definition of an orbit class, 
the condition $2)$ implies the condition $3)$. 
Suppose that 
$S/\hat{v}_{\mathrm{ex}}$ is $T_1$.  
This means that 
each proper orbit is 
either closed or  
contained in a closed multi-saddle connection.  
Lemma \ref{lem03-09} implies $\mathrm{LD} = \emptyset$. 
Suppose that 
$\mathrm{LD} = \emptyset$, 
the multi-saddle connection diagram contains $\mathrm{P}$, and 
each multi-saddle connection is closed.  
Then $\mathrm{int} \, \mathrm{P} = \emptyset$. 
Lemma 2.3 \cite{Y} implies 
$\mathrm{E} = \emptyset$ 
and so 
$S = \overline{\mathop{\mathrm{Cl}}(v)} = \mathop{\mathrm{Cl}}(v) \sqcup \delta \mathrm{P}$. 
This implies that 
$S/v_{\mathrm{ex}}$  is $T_1$ 
and that 
$v$ is non-wandering. 
%
\end{proof}

The closed condition of multi-saddle connections in the 
previous lemma is necessary. 
Indeed, 
consider an rotation $v_0$ with respect to an axis on $\S^2$. 
Write $\{p_N, p_S \} := \mathop{\mathrm{Sing}}(v_0)$.  
Fix a periodic orbit $O \subset \S^2$. 
Consider a sequence $(x_n)_{n \in \Z}$ of pairwise distinct points $x_n \in O$ converging to a point $x$ 
positively and negatively from different sides. 
Using a bump function and 
replacing the orbit $O$ into a union of singular points and proper orbits, 
the resulting flow $v$ is a non-wandering flow 
whose singular points are countable 
with 
$O = \mathrm{P} \sqcup (\mathop{\mathrm{Sing}}(v) - \{p_N, p_S \})$ 
such that $S/v_{\mathrm{ex}}$ is not $T_1$.  
Indeed, 
since each singular point in $O$ except $x$ is a $0$-saddle, 
the union $O - \{ x \}$ is an extended orbit of $v$ whose closure is $O$.

\begin{lemma}\label{lem3-10}
Let $v$ be a non-trivial flow on a compact connected surface $S$ 
such that $S/v_{\mathrm{ex}}$ is $T_1$. 
The following are equivalent: 

1) 
$\mathop{\mathrm{Sing}}(v)$ is totally disconnected.

2) 
$S/{v}_{\mathrm{ex}}$ is $T_2$.  

3) 
$S/\hat{v}_{\mathrm{ex}}$ is $T_2$.  

4)
%
Eether 
$v$ is a rational rotation on $\T^2$ 
or 
$(S/v_{\mathrm{ex}})/\sim_E$ is 
a non-trivial multi-graph 
$(V_{\mathrm{ex}}, E_{\mathrm{ex}}, r_{\mathrm{ex}})$.  

In any case, 
$S = \overline{\mathop{\mathrm{Per}}(v)}$ 
and 
the set  $\mathop{\mathrm{Sing}}(v)$ consists of 
multi-saddles 
and 
quasi-centers. 
\end{lemma}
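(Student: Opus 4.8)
The plan is to exploit the hypothesis that $S/v_{\mathrm{ex}}$ is $T_1$ through Lemma \ref{lem15a}, which already supplies the skeleton: $\mathrm{LD} = \emptyset$, $\mathrm{E} = \emptyset$, the multi-saddle connection diagram is closed and contains $\mathrm{P}$, the flow is non-wandering, $S/v_{\mathrm{ex}} = S/\hat{v}_{\mathrm{ex}}$, and $S = \mathop{\mathrm{Cl}}(v) \sqcup \mathrm{P} = \mathop{\mathrm{Sing}}(v) \sqcup \mathop{\mathrm{Per}}(v) \sqcup \mathrm{P}$ with $\mathrm{int}\,\mathrm{P} = \emptyset$. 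The equality $S/v_{\mathrm{ex}} = S/\hat{v}_{\mathrm{ex}}$ gives $(2) \Leftrightarrow (3)$ at once, so the real content is to tie $(1)$, $(2)$, and $(4)$ together and to read off the final structural claim. Since $\mathrm{int}\,\mathrm{P} = \emptyset$ and every singular point not adjacent to $\mathrm{P}$ lies in the closure of $\mathop{\mathrm{Per}}(v)$, I would first record $S = \overline{\mathop{\mathrm{Per}}(v)}$, noting $\mathop{\mathrm{Per}}(v) \neq \emptyset$ by non-triviality.

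For $(2) \Rightarrow (1)$ I would argue by contraposition, reusing the Cantor-manifold disconnection technique from the preceding Hausdorff lemma. If $\mathop{\mathrm{Sing}}(v)$ has a connected component with more than one point, there is a non-isolated singular point $x$ on the boundary of a periodic component $C$, which by Corollary 2.9 \cite{Y} is an open annulus, an open M\"obius band, or all of $S$. Choosing a small closed-ball neighborhood $U$ with $\overline{U} \cap \partial C = \{ x \}$, the set $V := U \setminus \overline{C}$ is nonempty open and $\overline{U} - \{ x \}$ splits as $(W_1 - \{ x \}) \sqcup (W_2 - \{ x \})$ exactly as in that lemma; since $x$ is non-isolated it cannot be a multi-saddle, and the two sides of $C$ near $x$ cannot be separated in $S/v_{\mathrm{ex}}$, contradicting $T_2$. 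The crucial point is that, because multi-saddle connections are closed by Lemma \ref{lem15a}, collapsing them does not repair this non-separation, so the disconnection argument survives the passage to the extended quotient.

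For $(1) \Rightarrow (4)$ together with the structural conclusion, I would classify each singular point: total disconnectedness of $\mathop{\mathrm{Sing}}(v)$ together with $S = \mathop{\mathrm{Cl}}(v) \sqcup \mathrm{P}$ forces every singular point to be either a multi-saddle (when it carries separatrices, i.e.\ lies in the closed saddle diagram) or a quasi-center (when it has a saturated neighborhood of closed extended orbits). If $\mathop{\mathrm{Sing}}(v) = \emptyset$ and $\mathrm{P} = \emptyset$, then $S = \mathop{\mathrm{Per}}(v)$ is filled by periodic orbits, forcing the rational rotation on $\T^2$; otherwise the quotients of the periodic components (intervals or circles, by Corollary 2.9 \cite{Y}) become the edges $E_{\mathrm{ex}}$ after collapsing by $\sim_E$, the quasi-centers, multi-saddle connections, and one-sided periodic orbits become the vertices $V_{\mathrm{ex}}$, and the boundary assignment $r_{\mathrm{ex}}$ realizes $(S/v_{\mathrm{ex}})/\sim_E$ as a non-trivial multi-graph. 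Finally $(4) \Rightarrow (2)$ is the easy direction: a multi-graph is $T_2$ by the preliminaries and $S/v_{\mathrm{ex}}$ is its geometric realization (edges un-collapsed into intervals), while for the rational rotation $S/v_{\mathrm{ex}} \cong \S^1$ is $T_2$ directly. I expect the main obstacle to be the contrapositive $(2) \Rightarrow (1)$: making rigorous that a non-degenerate continuum of singular points genuinely obstructs Hausdorffness of the extended quotient, where one must verify that collapsing the closed multi-saddle connections neither creates nor removes the offending non-separated pair.
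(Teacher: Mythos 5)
Your skeleton (invoke Lemma \ref{lem15a}, get $(2)\Leftrightarrow(3)$ from $S/v_{\mathrm{ex}}=S/\hat{v}_{\mathrm{ex}}$, then close the cycle $(1)\Rightarrow(4)\Rightarrow(2)\Rightarrow(1)$) matches the paper's architecture, but both hard implications are left with genuine gaps. For $(1)\Rightarrow(4)$ you simply assert that total disconnectedness of $\mathop{\mathrm{Sing}}(v)$ ``forces'' every non-multi-saddle singular point to be a quasi-center, i.e.\ to have a saturated neighborhood of closed extended orbits. That is precisely the technical core of the paper's proof and it does not come for free: a priori a sequence of periodic orbits could accumulate on a large singular set containing $x$. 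The paper handles this by building, for each singular $x\in\partial(\mathop{\mathrm{Per}}(v)\sqcup D)$, the accumulation set $X_x$ of a convergent sequence of periodic orbits, proving $X_x$ is connected and contained in $\partial(\mathop{\mathrm{Sing}}(v)-M)$, hence a singleton when $\mathop{\mathrm{Sing}}(v)$ is totally disconnected; this singleton property is then what yields both the quasi-center structure of the boundary components and, via the ``two sequences $y_n,z_n\in O_n$'' argument, the Hausdorff property. Nothing in your proposal does this work.

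For $(2)\Rightarrow(1)$ your transplanted argument from the $S/v$-Hausdorff lemma does not apply. There the choice of $U$ with $\overline{U}\cap\partial C=\{x\}$ was legitimate only because $\partial C$ had already been shown to consist of at most two points; here the possibility that $\partial C$ contains a nondegenerate continuum of singular points through $x$ is exactly what you are trying to exclude, and for such an $x$ no such $U$ exists (moreover $V=U\setminus\overline{C}$ may be empty if the accumulating singular points lie in $\partial C$ itself). You also conflate the contradiction: in the earlier lemma it was a Cantor-manifold disconnection of $\overline{U}$, not a failure of separation, and your claim that ``the two sides of $C$ near $x$ cannot be separated'' is unjustified. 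The paper argues differently: a connected component $C$ of $\partial(\mathop{\mathrm{Per}}(v)\sqcup D)$ is accumulated by common periodic orbits, so $T_2$ forces it into a single extended orbit, hence a singleton; then, separately, the Cantor-manifold property excludes $\mathrm{int}\,\mathop{\mathrm{Sing}}(v)\neq\emptyset$ — a case you never address, and which also invalidates your opening claim that $S=\overline{\mathop{\mathrm{Per}}(v)}$ follows from the $T_1$ hypothesis alone (a flow that is the identity on a closed disk and periodic elsewhere has $T_1$ extended orbit space but $S\neq\overline{\mathop{\mathrm{Per}}(v)}$; that equality is part of the conclusion under the equivalent conditions, not of the standing hypothesis).
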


\begin{proof}
Let $v$ be a non-trivial flow on a compact connected surface $S$ 
whose extended orbit space $S/{v}_{\mathrm{ex}}$ is $T_1$.  
By Lemma \ref{lem15a}, 
we have 
$\mathrm{P}$ is contained in the multi-saddle connection diagram  
and $\mathrm{LD} \sqcup \mathrm{E} = \emptyset$ 
(i.e. $S = \mathop{\mathrm{Cl}}(v) \sqcup \delta \mathrm{P}$). 
Note the compact connected surface $S$ is a $2$-dimensional Cantor-manifold. 
If $v$ is a rational rotation on $\T^2$  (i.e. a periodic torus), 
then the assertion holds. 
Thus we may assume that 
$v$ is not a rational rotation on $\T^2$. 
The non-triviality implies the existence of singular points. 
Lemma \ref{lem15a} implies that 
the conditions 2) and 3) are equivalent 
and that 
each quasi-isolated singular point is either 
a quasi-center or a multi-saddle.  
Trivially, 
the condition 4) implies 3). 
%
%
Let $M \subseteq \mathop{\mathrm{Sing}}(v)$ be 
the set of 
multi-saddles 
and $D$ the multi-saddle connection diagram. 
Since 
each multi-saddle is isolated, 
the complement 
$\mathop{\mathrm{Sing}}(v) - M = S - (\mathop{\mathrm{Per}}(v) \sqcup D)$ 
is closed. 
Then 
$\partial (\mathop{\mathrm{Sing}}(v) - M) = 
\partial (\mathop{\mathrm{Per}}(v) \sqcup D)
\subseteq \partial \mathop{\mathrm{Per}}(v)$.  
Moreover, the set $M$ is  countable. 
Indeed, 
by the definition of multi-saddles, 
each multi-saddle has a \nbd which contains no other saddles. 
Since $S$ is second countable, 
the set of multi-saddles 
can be enumerated 
and so is countable. 
Since $C \subset  \mathrm{int} (\mathop{\mathrm{Per}}(v) \sqcup C)$ 
for each multi-saddle connection $C$, 
we have $D \subseteq \mathrm{int} (\mathop{\mathrm{Per}}(v) \sqcup D)$ 
and so 
$\partial (\mathop{\mathrm{Per}}(v) \sqcup D) \cap D = \emptyset$. 
%
Fix a singular point $x \in \partial (\mathop{\mathrm{Per}}(v) \sqcup D)$. 
Let $(x_n)_{n \geq 0}$ be a convergence sequence in $\mathop{\mathrm{Per}}(v)$ 
to $x$ 
and $X_x := \{ \lim_{n \to \infty} y_n \mid (y_n) \text{ is a convergence sequence}, 
\, \, y_n \in \bigcup_{m \geq n} O(x_m) \}$. 
We claim that  
the limit $X_x$ is connected. 
Indeed, 
otherwise 
there are disjoint open  subsets 
$U, V$ such that 
$X_x \subseteq U \sqcup V$, 
$U \cap X_x \neq \emptyset$,  and 
$V \cap X_x \neq \emptyset$. 
Then $K := S - (U \sqcup V) \subset S - X_x$ is closed 
and so sequentially compact. 
Fix $y_n \in O(x_n) \cap K$. 
The sequence 
$(y_n)$ has a convergent subsequence $(y_{k_n})$. 
Then 
$\lim_{n \to \infty} y_{k_n} \in K \cap X_x$, 
which contradict to the definitions of $K$ and $X_x$. 
Note 
$x \in X_x \cap \mathop{\mathrm{Sing}}(v)$. 
We claim that  
$X_x \subseteq \partial (\mathop{\mathrm{Sing}}(v) - M)$. 
Indeed, 
for any periodic orbit $O$, 
there is a closed saturated \nbd $U_O$ of $O$ in $\mathop{\mathrm{Per}}(v)$ 
and so 
$O(x_n) \cap U_O = \emptyset$ for any large integer $n$. 
This implies 
$X_x \cap \mathop{\mathrm{Per}}(v) = \emptyset$ 
and so $X_x \subseteq 
\overline{\mathop{\mathrm{Per}}(v) \sqcup D} - \mathop{\mathrm{Per}}(v) 
= (\partial (\mathop{\mathrm{Per}}(v) \sqcup D) \sqcup \mathrm{int}(\mathop{\mathrm{Per}}(v) \sqcup D)) 
- \mathop{\mathrm{Per}}(v)  
\subseteq \partial (\mathop{\mathrm{Per}}(v) \sqcup D) \sqcup D$. 
Since $C \subset  \mathrm{int} (\mathop{\mathrm{Per}}(v) \sqcup C)$ 
for each multi-saddle connection $C$, 
there is a closed saturated \nbd $U_C$ of $C$ 
with $U_C - C \subseteq \mathop{\mathrm{Per}}(v)$ 
and so 
$O(x_n) \cap U_C = \emptyset$ for any large integer $n$. 
This implies $X_x \cap D = \emptyset$ 
and so $X_x \subseteq \partial (\mathop{\mathrm{Per}}(v) \sqcup D) 
= \partial (\mathop{\mathrm{Sing}}(v) - M)$. 
Suppose that 
$\mathop{\mathrm{Sing}}(v)$ is totally disconnected. 
Then 
$X_x$ is a singleton  
and $S = \overline{\mathop{\mathrm{Per}}(v)}$. 
This implies that 
each connected component of $\partial \mathop{\mathrm{Per}}(v)$ 
is contained in a closed extended orbit 
(i.e. either a multi-saddle connection or quasi-center). 
Assume that  $S/{v}_{\mathrm{ex}}$ is not $T_2$. 
Then 
there are distinct singular points $y \neq z$ 
such that 
any saturated \nbds of them intersect. 
Therefore there is a sequence $(O_n)$ of periodic orbits 
such that $y, z \in \overline{\bigcup_{n} O_n}$. 
This means that 
there are two convergence sequences $(y_n)$ and $(z_n)$ 
to $y$ and $z$ respectively such that  $y_n, z_n \in O_n$. 
Therefore $y \neq z \in X_z$, which contradicts that $X_z$ is a singleton. 
Since the complement of the union 
of  quasi-centers 
and 
of the multi-saddle connection diagram 
is $\mathop{\mathrm{Per}}(v)$, 
the quotient space $S/v_{\mathrm{ex}}$ is a non-trivial multi-graph 
each of whose vertices is either 
a quasi-center, 
a multi-saddle connection, 
or a periodic orbit with a one-sided neighborhood, 
and 
each of whose edges is a connected component of $\mathop{\mathrm{Per}}(v) \setminus N$ 
and is an open annulus, 
where $N$ is the union of periodic orbits with one-sided neighborhoods. 
Therefore $S/v_{\mathrm{ex}}$ is a non-trivial multi-graph $(V_{\mathrm{ex}}, E_{\mathrm{ex}}, r_{\mathrm{ex}})$.  
Finally it suffices to show that 
the condition 2) implies 1). 
%

Suppose that $S/{v}_{\mathrm{ex}}$ is $T_2$.  
Let $C \subseteq \partial (\mathop{\mathrm{Sing}}(v) - M) = 
\partial (\mathop{\mathrm{Per}}(v) \sqcup D)$ 
be a connected component of the boundary. 
The Hausdorff property implies that 
$C \subseteq \partial \mathop{\mathrm{Per}}(v)$ 
is contained in one extended orbit. 
Since $C \subseteq \mathop{\mathrm{Sing}}(v)$, 
the component $C$ is a singular point.
This means 
each connected component of 
$\partial (\mathop{\mathrm{Sing}}(v) - M)$ 
is a singleton. 
Since $M$ consists of isolated singular points, 
the boundary $\partial \mathop{\mathrm{Sing}}(v)$ 
is totally disconnected. 
Since the boundary $\partial \mathop{\mathrm{Sing}}(v) = 
\mathop{\mathrm{Sing}}(v) - \mathrm{int} \mathop{\mathrm{Sing}}(v)$ is compact metrizable, 
it is zero-dimensional. 
Hence 
$\mathrm{int} \mathop{\mathrm{Sing}}(v) = \emptyset$. 
Indeed, 
otherwise 
the dimension of $\mathop{\mathrm{Sing}}(v)$ 
is two.  
Then the complement $S - \partial \mathop{\mathrm{Sing}}(v)$ 
is disconnected because 
it has two connected components such that 
one contains a connected component of $\mathrm{int} \mathop{\mathrm{Sing}}(v)$ 
and the another contains one of $\mathop{\mathrm{Per}}(v)$. 
Since $S$ is a $2$-dimensional Cantor-manifold, 
the boundary $\partial \mathop{\mathrm{Sing}}(v)$ is at least 
one dimensional, which contradicts that 
$\partial \mathop{\mathrm{Sing}}(v)$ is zero-dimensional. 
Then  
$\mathop{\mathrm{Sing}}(v) 
= 
\partial \mathop{\mathrm{Sing}}(v)$ 
is totally disconnected 
and 
$S = \overline{\mathop{\mathrm{Per}}(v)}$.  
\end{proof}

\begingroup
\renewcommand{\arraystretch}{1.4}
\begin{table}[htb]
  \begin{center}
    \begin{tabular}{|l|c|c|} \hline
 &  $S/v$  & $S/v_{\mathrm{ex}}$  \\ \hline 
 $T_0$  & $\mathrm{LD} \sqcup  \mathrm{E} = \emptyset$ & $\mathrm{LD} \sqcup  \mathrm{E} = \emptyset$   \\ \hline 
 $T_1$  & $\mathrm{P} \sqcup  \mathrm{LD} = \emptyset$  & 
 $\mathrm{P} \subset \mathrm{D}$  \,   and   \,   $\mathrm{LD} = \emptyset$  \,   and  \\ 
& & $D$ consists of closed multi-saddle connections
 \\ \hline 
 $T_2$  & $S/v$ is $T_1$  \,  and   
 & $S/v_{\mathrm{ex}}$ is $T_1$ \,  and     \\ 
& $|\mathop{\mathrm{Sing}}(v)| \leq 2$  & $\mathop{\mathrm{Sing}}(v)$ is totally disconnected \\ \hline 
    \end{tabular}
  \end{center}
    \caption{For a non-wandering flow $v$ on a connected compact surface $S$ with the multi-saddle connection diagram $D$, 
necessary and sufficient conditions for 
separation axioms on the orbit space $S/v$ and the extended orbit space $S/v_{\mathrm{ex}}$ 
are compared. } 
\end{table}
\endgroup

Note that 
the condition that $\mathop{\mathrm{Sing}}(v)$ is totally disconnected 
is necessary in the previous lemma, 
because 
there is a real analytic non-wandering toral flow 
whose (extended) orbit space is not $T_2$ 
but $T_1$. 
Indeed, 
consider a real analytic non-wandering flow $v$ on $\T^2 = (\R/\Z)^2$ 
defined by $v_t(x,y) = (x + \sin (2 \pi y) t, y)$ 
with $\T^2 = \mathop{\mathrm{Cl}}(v)$ 
and $\mathop{\mathrm{Sing}}(v) = \R/\Z \times \{0,1/2 \}$ 
such that the orbit space $\T^2/v$ is not $T_2$ but $T_1$. 
%
Moreover, 
the multi-graph $(S/v_{\mathrm{ex}})/\sim_E$ need not be finite 
even if the extended orbit space is $T_2$, 
because 
there is a spherical flow $v$ 
whose extended orbit space is $T_2$  
whose multi-graph $(S/v_{\mathrm{ex}})/\sim_E$ has infinitely many edges.  
Indeed, 
consider a non-trivial rotation $v_0$ with respect to an axis on $\S^2$ 
and a sequence $(x_n)_{n \in \Z_{>0}}$ of points converging to a topological center 
such that 
$O_{v_0}(x_m) \neq O_{v_0}(x_n)$ for 
any $m \neq n \in \Z_{>0}$. 
Using a bump function and 
replacing the orbit of $x_n$ into a $0$-saddle with a homoclinic orbit 
(resp. a homoclinic saddle connection with a center disk)
the resulting flow $v$ is a non-wandering flow with 
infinitely many singular points such that 
the extended orbit space $\S^2/v_{\mathrm{ex}}$ of $v$ is a closed interval 
(resp. tree).
%
 %
%
%

\begin{lemma}\label{lem3-11}
Let $v$ be a flow on a compact surface $S$. 
The following are equivalent: 

1) 
$v$ is  non-wandering. 

2)  
$\mathrm{int}\mathrm{P} = \emptyset$. 

3)  
$(\mathrm{int}\mathrm{P}) \sqcup \mathrm{E} = \emptyset$ 
$($i.e. $S = \mathrm{Cl}(v) \sqcup \delta \mathrm{P} \sqcup \mathrm{LD} )$. 

4)  
$S = \overline{\mathop{\mathrm{Cl}}(v) \sqcup \mathrm{LD}}$. 
\end{lemma}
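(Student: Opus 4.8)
The plan is to prove the cycle $(2)\Rightarrow(3)\Rightarrow(4)\Rightarrow(1)\Rightarrow(2)$, arranged so that only the last implication uses genuine dynamics, while the first three are bookkeeping on the decomposition $S=\mathop{\mathrm{Cl}}(v)\sqcup\mathrm{P}\sqcup\mathrm{LD}\sqcup\mathrm{E}$ together with Lemma 2.3\cite{Y}. Before starting I record two standing facts about the non-wandering set $\Omega(v)$ (the set of non-wandering points): it is closed, since its complement, the wandering set, is visibly open; and it contains $\mathop{\mathrm{Cl}}(v)\sqcup\mathrm{LD}$. The inclusion $\mathop{\mathrm{Sing}}(v)\sqcup\mathop{\mathrm{Per}}(v)\subseteq\Omega(v)$ is immediate, because a singular or periodic point returns to every \nbd of itself for arbitrarily large times. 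The inclusion $\mathrm{LD}\subseteq\Omega(v)$ follows from the recurrence input already used in Lemma \ref{lem16}: by Theorem VI\cite{C} every locally dense orbit is nontrivially recurrent, so each $x\in\mathrm{LD}$ satisfies $x\in\omega(x)$ and is therefore non-wandering (the same argument gives $\mathrm{E}\subseteq\Omega(v)$, though this is not needed below).

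For $(2)\Rightarrow(3)$ I argue exactly as in the proof of Lemma \ref{lem15a}: by Lemma 2.3\cite{Y} the hypothesis $\mathrm{int}\,\mathrm{P}=\emptyset$ forces $\mathrm{E}=\emptyset$, whence $(\mathrm{int}\,\mathrm{P})\sqcup\mathrm{E}=\emptyset$ and $S=\mathop{\mathrm{Cl}}(v)\sqcup\delta\mathrm{P}\sqcup\mathrm{LD}$. For $(3)\Rightarrow(4)$ observe that $S\setminus\overline{\mathop{\mathrm{Cl}}(v)\sqcup\mathrm{LD}}$ is open and disjoint from $\mathop{\mathrm{Cl}}(v)\sqcup\mathrm{LD}$, hence contained in $\mathrm{P}\sqcup\mathrm{E}=\mathrm{P}$ (using $\mathrm{E}=\emptyset$); being an open subset of $\mathrm{P}$ it lies in $\mathrm{int}\,\mathrm{P}=\emptyset$, so it is empty and $S=\overline{\mathop{\mathrm{Cl}}(v)\sqcup\mathrm{LD}}$. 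For $(4)\Rightarrow(1)$ I combine the two standing facts: $\Omega(v)$ is closed and contains $\mathop{\mathrm{Cl}}(v)\sqcup\mathrm{LD}$, so $\Omega(v)\supseteq\overline{\mathop{\mathrm{Cl}}(v)\sqcup\mathrm{LD}}=S$, i.e.\ $v$ is non-wandering.

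The substantive implication is $(1)\Rightarrow(2)$, which I prove contrapositively: if $\mathrm{int}\,\mathrm{P}\neq\emptyset$ then $v$ has a wandering point. Fix $x\in\mathrm{int}\,\mathrm{P}$ and a flow box $B\cong(-1,1)_s\times(-1,1)_y$ with $\overline{B}\subseteq\mathrm{int}\,\mathrm{P}$, horizontal flow $\partial_s$, and central transversal $T=\{0\}\times(-1,1)$, so that every orbit meeting $B$ is proper and non-closed. Since properness is local closedness, after shrinking I may assume that on a short sub-transversal $T_\delta\subseteq T$ we have $\overline{O(x)}\cap T_\delta=\{x\}$; in particular $\omega(x)\cap T_\delta=\alpha(x)\cap T_\delta=\emptyset$, and the forward and backward orbits of $x$ leave the small box $B_\delta$ and never again cross $T_\delta$. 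I then claim that a sufficiently thin flow box $V\ni x$ witnesses wandering. If not, non-wandering of $x$ yields $z_n\to x$ and times $t_n\to+\infty$ (or $-\infty$) with $v_{t_n}(z_n)\to x$; flowing the endpoints onto $T_\delta$ produces orbit arcs from $a_n\to x$ to $b_n\to x$ inside $T_\delta$ whose transit times tend to infinity.

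The main obstacle is precisely this step: non-wandering of $x$ only asserts that \emph{nearby} orbits return near $x$, not that $O(x)$ itself recurs, so the long-time return must be converted into a contradiction using only the \emph{local} closedness of the orbits filling $B$. The plan is to study the first-return map $f$ to $T_\delta$: the point $x$ lies on the boundary of $\mathrm{dom}(f)$ (its own orbit never returns), the $a_n\in\mathrm{dom}(f)$ approach $x$, and the return times $\tau_n\to\infty$. Invoking the monotonicity of successive transversal crossings of a single orbit on a surface (valid on orientable surfaces, and in general after lifting to the orientable double cover; see \cite{ABZ,C}), $f$ is a monotone interval map, so the iterates $f^{j}(a_n)$ that carry $a_n$ back to the point near $b_n\to x$ are forced to move monotonically along $T_\delta$ and cannot return to a \nbd of $a_n\approx x$ unless either some $v_\tau(x)=x$ (contradicting that $x$ is non-periodic) or the crossings accumulate at an interior point of $T_\delta$ in a way that violates the local closedness of the limiting orbit, i.e.\ its membership in $\mathrm{P}$. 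Either way $x$ is wandering, so $\mathrm{int}\,\mathrm{P}\neq\emptyset$ is incompatible with the non-wandering property. I expect the delicate point to be making the projection onto $T_\delta$ and the monotonicity bookkeeping uniform in $n$ over unbounded time intervals, where naive continuous-dependence estimates fail and the two-dimensional transversal structure of \cite{ABZ} must be used in their place.
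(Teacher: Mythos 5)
Your reduction of the lemma to the single implication $(1)\Rightarrow(2)$ matches the paper's structure, and your three bookkeeping implications are essentially the paper's own: the step $\mathrm{int}\,\mathrm{P}=\emptyset\Rightarrow\mathrm{E}=\emptyset$ is exactly the paper's combination of Lemma 2.3 of \cite{Y} (that $\mathrm{P}\sqcup\mathrm{E}$ is a neighbourhood of $\mathrm{E}$) with the Ma\v \i er theorem (that $\overline{\mathrm{E}}$ is nowhere dense), and your $(4)\Rightarrow(1)$ via closedness of the non-wandering set is the same observation the paper makes. (One small repair there: a locally dense orbit need not itself be weakly recurrent; $\mathrm{LD}\subseteq\Omega(v)$ follows instead from the density of the nontrivially recurrent orbits in the quasi-minimal set, Theorem VI of \cite{C}.)

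The genuine gap is in $(1)\Rightarrow(2)$. The paper disposes of this step by quoting Theorems III.2.12 and III.2.15 of \cite{BS}: for a flow on a compact space all of whose points are non-wandering, the set of weakly recurrent points is dense --- this is the Birkhoff centre theorem, uses no surface topology, and gives $\mathrm{int}\,\mathrm{P}=\emptyset$ at once because $\mathrm{P}$ is by definition the complement of the weakly recurrent set. Your substitute argument hinges on the ``monotonicity of successive transversal crossings of a single orbit on a surface,'' and this is where it fails: that monotonicity is a Jordan-curve-theorem phenomenon, valid on the plane and the sphere but false on surfaces of positive genus (for the linear irrational flow on $\T^2$ the successive intersections of one orbit with a short transversal arc are ordered as an irrational rotation, hence non-monotonically), and passing to the orientable double cover repairs orientability, not genus. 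Even where monotonicity does hold, your endgame --- that the returns must either force $v_\tau(x)=x$ or ``accumulate in a way that violates local closedness'' --- is a description of the two cases still to be excluded rather than an exclusion of them: nothing in the local properness of the orbits filling the flow box forbids a first-return map defined at points $a_n\to x$ with $f(a_n)\to x$ and return times $\tau_n\to\infty$ while $O(x)$ itself never returns, since the long excursions occur outside the box where no properness hypothesis is in force. The implication should be replaced by the citation the paper uses, or by the short transfinite non-wandering-set argument that proves it on any compact metric space.
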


\begin{proof} 
Recall that 
$S = \mathop{\mathrm{Cl}}(v) \sqcup \mathrm{P} \sqcup \mathrm{LD} \sqcup \mathrm{E}$ 
and that 
the union $\mathrm{P}$ is the set of points which are not weakly recurrent.  
By taking a double covering of $M$ if necessary, 
we may assume that $v$ is transversally orientable. 
By the Ma\v \i er theorem \cite{M,Ma}, 
the closure $\overline{\mathrm{E}}$ is a finite union of closures of exceptional orbits 
and so is nowhere dense.  
By Lemma 2.3 \cite{Y}, 
the union $\mathrm{P} \sqcup \mathrm{E}$ is a \nbd of $\mathrm{E}$. 
Then 
$\mathrm{int}(\mathrm{P} \setminus \overline{\mathrm{E}}) \neq \emptyset$ 
if $\mathrm{E} \neq \emptyset$. 
We show that the condition 2) implies the condition 1). 
Suppose that $\mathrm{int}\mathrm{P} = \emptyset$. 
Then the closure of the set of weakly recurrent points is the whole surface 
(i.e. $S = \overline{\mathop{\mathrm{Cl}}(v) \sqcup \mathrm{LD} \sqcup \mathrm{E}}$) 
and so $v$ is  non-wandering. 
Trivially, the condition 3) (resp. 4)) implies the condition 2). 
Finally, 
we show that the condition 1) implies the conditions 3) and 4). 
Suppose that $v$ is non-wandering. 
By Theorem III.2.12, III.2.15 \cite{BS}, 
the set of weakly recurrence points is dense in $S$. 
The density of weakly recurrence points implies that $\mathrm{int}\mathrm{P} = \emptyset$ 
and so $\mathrm{E} = \emptyset$.  
This implies that 
$S = \overline{\mathop{\mathrm{Cl}}(v) \sqcup \mathrm{LD}}$. 
\end{proof}

We obtain another characterization of the Hausdorff separation property of 
the extended orbit space. 

\begin{lemma}
Let $v$ be a non-trivial flow on a compact connected surface $S$.  
The following are equivalent: 

1) 
$S = \overline{\mathop{\mathrm{Cl}}(v)}$ and each singular point is either a multi-saddle or a quasi-center. 

2) 
$S/{v}_{\mathrm{ex}}$ is $T_2$.  
%
\end{lemma}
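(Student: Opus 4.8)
The plan is to prove the equivalence by combining the preceding lemmas, reducing to the case characterized in Lemma~\ref{lem3-10}. Recall that Lemma~\ref{lem3-10} already gives a clean characterization of when $S/v_{\mathrm{ex}}$ is $T_2$ \emph{under the additional hypothesis} that $S/v_{\mathrm{ex}}$ is $T_1$, namely that $\mathop{\mathrm{Sing}}(v)$ is totally disconnected (with the ``in any case'' conclusion $S = \overline{\mathop{\mathrm{Per}}(v)}$ and every singular point a multi-saddle or quasi-center). So the real content of the present lemma is to remove the $T_1$ assumption on $S/v_{\mathrm{ex}}$ and replace it by the intrinsic dynamical condition $S = \overline{\mathop{\mathrm{Cl}}(v)}$ together with the singular-point classification. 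The strategy is therefore to show that condition~1) forces $S/v_{\mathrm{ex}}$ to be $T_1$ (so that Lemma~\ref{lem3-10} applies and yields $T_2$), and conversely that $T_2$ forces condition~1).

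First I would prove $1) \Rightarrow 2)$. Assume $S = \overline{\mathop{\mathrm{Cl}}(v)}$ and each singular point is a multi-saddle or quasi-center. Since a quasi-center has a saturated neighborhood of closed extended orbits and multi-saddles are isolated, the hypothesis $S = \overline{\mathop{\mathrm{Cl}}(v)} = \overline{\mathop{\mathrm{Sing}}(v) \sqcup \mathop{\mathrm{Per}}(v)}$ rules out locally dense and exceptional orbits: indeed $\mathrm{LD} = \mathrm{E} = \emptyset$ because the interior of $\mathop{\mathrm{Cl}}(v)$'s closure cannot meet a quasi-minimal set once every singular point is of the listed type. I would then invoke Lemma~\ref{lem3-11} to conclude $v$ is non-wandering, and Lemma~\ref{lem15a} to conclude $S/v_{\mathrm{ex}}$ is $T_1$, after checking that $\mathrm{P}$ is contained in the multi-saddle connection diagram and each multi-saddle connection is closed. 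The closedness of multi-saddle connections is where the multi-saddle classification of singular points is essential: with $\mathrm{LD} = \emptyset$ and all singular points isolated-or-quasi-central, the multi-saddle connection diagram has no accumulation of saddles, forcing each connection to be closed. Once $S/v_{\mathrm{ex}}$ is $T_1$, the fact that $\mathop{\mathrm{Sing}}(v)$ consists of multi-saddles and quasi-centers makes it totally disconnected (multi-saddles are isolated, quasi-centers are isolated as singular points), so Lemma~\ref{lem3-10} (condition~1 $\Leftrightarrow$ condition~2) gives that $S/v_{\mathrm{ex}}$ is $T_2$.

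For the converse $2) \Rightarrow 1)$, I would argue that $S/v_{\mathrm{ex}}$ being $T_2$ is in particular $T_1$, so Lemma~\ref{lem15a} yields $\mathrm{LD} = \emptyset$ and $S = \mathop{\mathrm{Cl}}(v) \sqcup \delta \mathrm{P}$, whence $S = \overline{\mathop{\mathrm{Cl}}(v)}$. Then Lemma~\ref{lem3-10} applies and its ``in any case'' conclusion states precisely that $\mathop{\mathrm{Sing}}(v)$ consists of multi-saddles and quasi-centers, giving the remaining half of condition~1). The main obstacle I anticipate is the direction $1) \Rightarrow 2)$, specifically verifying that condition~1) actually implies $S/v_{\mathrm{ex}}$ is $T_1$ rather than merely $T_0$; the subtlety is that $S = \overline{\mathop{\mathrm{Cl}}(v)}$ permits a priori a large set $\delta\mathrm{P}$ of non-closed proper orbits accumulating on singular points, and one must use the multi-saddle/quasi-center dichotomy to control how these proper orbits sit relative to the saddle connection diagram, ensuring no extended orbit fails to be closed in the extended orbit space. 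I would handle this by a careful local analysis near each singular point, distinguishing the multi-saddle case (where separatrices form closed connections by isolatedness) from the quasi-center case (where the saturated neighborhood of closed extended orbits directly gives the $T_1$ separation), thereby confirming the hypotheses of Lemma~\ref{lem15a} are met.
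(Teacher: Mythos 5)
Your overall route coincides with the paper's: establish the hypotheses of Lemma \ref{lem15a} to get that $S/v_{\mathrm{ex}}$ is $T_1$, pass to Lemma \ref{lem3-10} to upgrade to $T_2$, and run the converse through $T_2\Rightarrow T_1$, Lemma \ref{lem15a}, Lemma \ref{lem3-11}, and the ``in any case'' clause of Lemma \ref{lem3-10}. The converse direction and the general reduction strategy are sound (the elimination of $\mathrm{LD}$ and $\mathrm{E}$ in $1)\Rightarrow 2)$ is only sketched in your write-up, but it follows from $S=\mathop{\mathrm{Sing}}(v)\cup\overline{\mathop{\mathrm{Per}}(v)}$ together with Lemma 2.3 of \cite{Y}, as the paper does).

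The gap is in your justification of the key step of $1)\Rightarrow 2)$: that every multi-saddle connection is closed. You assert that ``with $\mathrm{LD}=\emptyset$ and all singular points isolated-or-quasi-central, the multi-saddle connection diagram has no accumulation of saddles.'' That premise is false under hypothesis 1): the paper's own example of a rotation on $\S^2$ in which infinitely many periodic orbits accumulating on a center are replaced by $0$-saddles with homoclinic loops satisfies 1) (every singular point is a multi-saddle or a quasi-center and $S=\overline{\mathop{\mathrm{Cl}}(v)}$), yet the multi-saddles do accumulate. Isolatedness of each individual multi-saddle does not prevent infinitely many of them from being chained by separatrices into a single connected component of the diagram whose closure acquires a limit singular point; that is precisely the configuration one must exclude. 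The correct argument is by contradiction: a non-closed extended orbit would have to be a multi-saddle connection containing infinitely many multi-saddles (a connection with finitely many saddles has finitely many separatrices and is therefore closed); by compactness these saddles accumulate at a non-isolated singular point, which by hypothesis must be a quasi-center; but the quasi-center's saturated neighborhood consists of closed extended orbits and absorbs the whole extended orbit, forcing it to be closed --- a contradiction. Your closing remark about ``the saturated neighborhood of closed extended orbits'' points at the right tool, but you apply it only to orbits near a quasi-center rather than to a putative non-closed connection accumulating on one, so as written the step does not go through.
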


\begin{proof}
Suppose that $S = \overline{\mathop{\mathrm{Cl}}(v)}$ and 
each singular point is either a multi-saddle or a quasi-center. 
Then $S = \overline{\mathop{\mathrm{Cl}}(v)} 
= \mathop{\mathrm{Sing}}(v) \cup \overline{\mathop{\mathrm{Per}}(v)}$. 
By Lemma 2.3 \cite{Y}, we obtain 
$S= 
\mathop{\mathrm{Cl}}(v) 
\sqcup \delta \mathrm{P}$. 
Since $v$ is non-wandering, 
Proposition 2.6 \cite{Y} implies that 
each non-closed proper orbit is a separatrix connecting singular points. 
Assume that 
there is an non-closed extended orbit $O_{\mathrm{ex}}$. 
By hypothesis, 
the intersection $\mathrm{Sing}(v) \cap O_{\mathrm{ex}}$ consists of 
infinitely many multi-saddles. 
Then there is a sequence of multi-saddles contained in $O_{\mathrm{ex}}$ 
converging to a non-isolated singular point $x$. 
By hypothesis, 
the singular point $x$ is a quasi-center. 
By the definition of quasi-centers, 
the extended orbit $O_{\mathrm{ex}}$ is closed, which contradicts to the assumption. 
Thus each extended orbit is closed. 
By Lemma \ref{lem15a}, 
the extended orbit space $S/v_{\mathrm{ex}}$ is $T_1$. 
By Lemma \ref{lem3-10}, 
the extended orbit space $S/{v}_{\mathrm{ex}}$ is $T_2$.  
Conversely, 
suppose that $S/{v}_{\mathrm{ex}}$ is $T_2$. 
Lemma \ref{lem15a} implies that 
$\mathrm{LD} = \emptyset$ and 
the union $\mathrm{P}$ is contained in the multi-saddle connection diagram. 
This means that 
$\mathrm{int}\mathrm{P} = \emptyset$. 
By Lemma \ref{lem3-11},
we have $S = \overline{\mathop{\mathrm{Cl}}(v)} = \mathop{\mathrm{Cl}}(v) \sqcup \delta \mathrm{P}$. 
Moreover, 
Lemma \ref{lem3-10} implies that $\mathop{\mathrm{Sing}}(v)$ is totally disconnected. 
Since each extended orbit is closed, 
each singular point is either a multi-saddle or a quasi-center. 
\end{proof}

Summarize the characterization of separation axioms for orbit spaces and orbit class spaces.

\begin{theorem}\label{th01}
Let $v$ be a non-trivial flow 
on a compact connected surface $S$ 
and $D$ the multi-saddle connection diagram. 
The following holds: 

1) 
$S/v$ is $T_0$ 
\iff
$S = \mathrm{Pr}$ 
$($i.e. $S = \mathop{\mathrm{Cl}}(v) \sqcup \mathrm{P} )$. 

2) 
$S/v$ is $T_1$ 
\iff
$S = \mathop{\mathrm{Cl}}(v)$ 
$($i.e. $S = \mathop{\mathrm{Sing}}(v) \sqcup \mathop{\mathrm{Per}}(v) )$. 

3) 
$S/v$ is $T_2$ 
\iff 
$S = \mathop{\mathrm{Cl}}(v)$ and $|\mathop{\mathrm{Sing}}(v)| \leq 2$. 

4) 
$S/v_{\mathrm{ex}}$  is $T_1$ 
\iff 
each multi-saddle connection is closed  
such that  $\mathrm{P} \subset D$ and $\mathrm{LD} = \emptyset$. 
%

5) 
$S/v_{\mathrm{ex}}$  is $T_2$ 
\iff 
$S = \overline{\mathop{\mathrm{Cl}}(v)}$ and each singular point is either a multi-saddle or a quasi-center. 
\end{theorem}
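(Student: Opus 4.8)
The plan is to treat Theorem \ref{th01} as a consolidation of the separation-axiom results established above, so that each of the five equivalences is obtained by quoting one prior lemma and reconciling its phrasing with the global decomposition $S = \mathop{\mathrm{Cl}}(v) \sqcup \mathrm{P} \sqcup \mathrm{LD} \sqcup \mathrm{E}$. I would dispose of the two extended-space statements first, since they are essentially verbatim restatements: part 4 is the equivalence of conditions 1) and 2) of Lemma \ref{lem15a}, and part 5 is the equivalence of conditions 1) and 2) in the characterization of the Hausdorff property of $S/v_{\mathrm{ex}}$ proved just above. No further argument is required there.

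For part 1, I would invoke Lemma \ref{lem16}, which identifies $p(\mathrm{Pr})$ as the exact set of $T_0$ points of $S/v$. Since $p$ is surjective and collapses orbits, and since no orbit class meets both $\mathrm{Pr}$ and its complement, the space $S/v$ is $T_0$ precisely when $p(\mathrm{Pr}) = S/v$, that is, when $\mathrm{LD} \sqcup \mathrm{E} = \emptyset$; by the decomposition this reads $S = \mathrm{Pr} = \mathop{\mathrm{Cl}}(v) \sqcup \mathrm{P}$. For part 2, I would quote Corollary \ref{lemma15}, giving that $S/v$ is $T_1$ if and only if $\mathrm{P} \sqcup \mathrm{LD} = \emptyset$. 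To upgrade the right-hand side to $S = \mathop{\mathrm{Cl}}(v)$ it remains only to rule out exceptional orbits, and indeed $\mathrm{P} = \emptyset$ forces $\mathrm{E} = \emptyset$ by Lemma 2.3 \cite{Y} (already used in the proof of Lemma \ref{lemma15-}); hence $\mathrm{P} \sqcup \mathrm{LD} = \emptyset$ is equivalent to $S = \mathop{\mathrm{Cl}}(v)$.

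For part 3 I would combine part 2 with the Hausdorff lemma for $S/v$, which, under the standing hypothesis that $S/v$ is $T_1$, asserts that $S/v$ is $T_2$ if and only if $\mathop{\mathrm{Sing}}(v)$ consists of at most two topological centers. Thus $S/v$ is $T_2$ exactly when $S = \mathop{\mathrm{Cl}}(v)$ and $\mathop{\mathrm{Sing}}(v)$ is at most two topological centers, so the only thing to verify is that, granted $S = \mathop{\mathrm{Cl}}(v)$, the cardinality bound $|\mathop{\mathrm{Sing}}(v)| \le 2$ is interchangeable with ``at most two topological centers.'' One direction is immediate, since topological centers are singular points. For the converse I would argue that $S = \mathop{\mathrm{Cl}}(v)$ with finitely many (hence isolated) singular points makes $\mathop{\mathrm{Sing}}(v)$ zero-dimensional, so by the Cantor-manifold property $\mathop{\mathrm{Per}}(v) = S - \mathop{\mathrm{Sing}}(v)$ is connected and equals a single periodic component $C$ (annulus, M\"obius band, or the whole surface, by Corollary 2.9 \cite{Y}) with $\partial C = \mathop{\mathrm{Sing}}(v)$; since $\mathrm{P} = \emptyset$ there are no separatrices, so each of the at most two boundary singularities is encircled by closed orbits and is therefore a topological center.

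This last reconciliation in part 3 is the only place where a genuine, if short, argument beyond direct citation is needed, and I expect it to be the main obstacle of the proof; everything else is bookkeeping against the earlier lemmas and the decomposition of $S$.
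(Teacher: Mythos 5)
Your proposal is correct and matches the paper's intent exactly: the paper states Theorem \ref{th01} as a summary with no separate proof, each part being a restatement of Lemma \ref{lem16}, Corollary \ref{lemma15}, the $T_2$ lemma for $S/v$, Lemma \ref{lem15a}, and the Hausdorff characterization of $S/v_{\mathrm{ex}}$, respectively. The one point needing a genuine argument --- that under $S = \mathop{\mathrm{Cl}}(v)$ the bound $|\mathop{\mathrm{Sing}}(v)| \leq 2$ is interchangeable with ``at most two topological centers'' --- you handle correctly, and your Cantor-manifold argument is the same one the paper itself deploys inside the proof of its $T_2$ lemma.
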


%
%
%
%
%
%

\section{Characterizations of non-wandering surface flows with finitely many singular points}

We show a following statement.

\begin{lemma}\label{lem3-4}
Let $v$ be a flow 
on a compact connected surface $S$. 
The following are equivalent: 

1) 
The flow $v$ is non-wandering with 
$|\mathop{\mathrm{Sing}}(v)| < \infty$. 

2) 
$(\mathop{\mathrm{Sing}}(v) \sqcup \mathrm{P})/v$ is a finite multi-graph-like poset. 

In any case, 
the union $\mathop{\mathrm{Sing}}(v) \sqcup \mathrm{P}$ 
consists of centers and 
of the multi-saddle connection diagram. 
\end{lemma}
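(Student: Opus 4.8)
The plan is to prove the two implications separately and to read off the final ``in any case'' clause from the forward direction. The tools I expect to rely on are Lemma~\ref{lem3-11} (which says $v$ is non-wandering if and only if $\mathrm{int}\,\mathrm{P}=\emptyset$), the structural results for non-wandering surface flows from~\cite{Y} (in particular Proposition~2.6 of~\cite{Y}, that every non-closed proper orbit is a separatrix whose $\alpha$- and $\omega$-limit sets are singular, together with the local classification of isolated singularities), and the earlier lemma asserting that the abstract multi-saddle connection diagram $D/v$ is a multi-graph-like poset with respect to the specialization order.

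For $1)\Rightarrow 2)$ I would first invoke Lemma~\ref{lem3-11} to get $\mathrm{E}=\emptyset$ and $\mathrm{int}\,\mathrm{P}=\emptyset$, and then Proposition~2.6 of~\cite{Y} to conclude that every orbit in $\mathrm{P}$ is a separatrix joining singular points, so that $\mathrm{P}\subseteq D$. Since $|\mathop{\mathrm{Sing}}(v)|<\infty$, every singular point is isolated; those lying in the closure of some separatrix carry separatrices and hence, by the local structure of non-wandering singularities, are multi-saddles, each carrying only finitely many separatrices, while the remaining singular points carry no separatrix and are topological centers. This identifies $\mathop{\mathrm{Sing}}(v)\sqcup\mathrm{P}$ with the disjoint union of the finitely many centers and the multi-saddle connection diagram $D$, which is exactly the ``in any case'' assertion. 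Passing to the orbit space then gives $(\mathop{\mathrm{Sing}}(v)\sqcup\mathrm{P})/v = (D/v)\sqcup\{\text{centers}\}$: the first summand is a finite multi-graph-like poset by the earlier lemma (finite because there are finitely many multi-saddles, each with finitely many separatrices), and adjoining the finitely many centers as isolated height-$0$ points preserves both the bound on heights and the bound $|\mathop{\downarrow}x|\leq 3$. Hence $(\mathop{\mathrm{Sing}}(v)\sqcup\mathrm{P})/v$ is a finite multi-graph-like poset.

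For $2)\Rightarrow 1)$ finiteness of the poset immediately forces both $|\mathop{\mathrm{Sing}}(v)|<\infty$ and $\mathrm{P}/v$ finite, since distinct orbits of $\mathop{\mathrm{Sing}}(v)\sqcup\mathrm{P}$ map to distinct points. By Lemma~\ref{lem3-11} it then suffices to show $\mathrm{int}\,\mathrm{P}=\emptyset$, which I would argue by contradiction: if $\mathrm{int}\,\mathrm{P}\neq\emptyset$, every point of this open set is regular (orbits in $\mathrm{P}$ are neither singular nor periodic), so I may take a flow box around such a point contained in $\mathrm{int}\,\mathrm{P}$; the transverse direction parametrizes uncountably many pairwise distinct orbits, all lying in $\mathrm{P}$, so $\mathrm{P}/v$ is uncountable, a contradiction. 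Therefore $\mathrm{int}\,\mathrm{P}=\emptyset$ and $v$ is non-wandering.

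The step I expect to be most delicate is the identification, in the forward direction, of $\mathrm{P}$ with the edge set of $D$ and of the singular points with centers and multi-saddles: this rests entirely on the structure theory of~\cite{Y}, and in particular on ruling out isolated singularities that carry separatrices yet fail to be genuine multi-saddles. Once that classification is in hand the rest is bookkeeping, namely assembling $D/v$ and the isolated centers into a poset using the already-established fact that $D/v$ is multi-graph-like, and verifying finiteness. The reverse direction is comparatively soft, its only real content being the flow-box cardinality argument that converts a nonempty interior of $\mathrm{P}$ into uncountably many orbits.
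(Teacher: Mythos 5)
Your proposal is correct and follows essentially the same route as the paper: the forward direction rests on the classification of isolated singularities of non-wandering flows into topological centers and multi-saddles (the paper cites Theorem 3 of \cite{CGL} for exactly the dichotomy you invoke) together with Proposition 2.6 of \cite{Y} to place $\mathrm{P}$ inside the multi-saddle connection diagram, and the reverse direction reduces to $\mathrm{int}\,\mathrm{P}=\emptyset$ via Lemma \ref{lem3-11}. Your flow-box cardinality argument and the explicit decomposition $(\mathop{\mathrm{Sing}}(v)\sqcup\mathrm{P})/v=(D/v)\sqcup\{\text{centers}\}$ merely supply detail where the paper is terse; there is no substantive difference in approach.
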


\begin{proof}
Suppose that $v$ is non-wandering with 
$|\mathop{\mathrm{Sing}}(v)| < \infty$. 
Since $\mathop{\mathrm{Sing}}(v)$ is finite, 
Theorem 3\cite{CGL} implies that 
each singular point is 
either topological center or a multi-saddle. 
Proposition 2.6 \cite{Y} implies that 
$\mathrm{P}$ is contained in the multi-saddle connection diagram 
and consists of finitely many orbits.  
This implies that 
$\mathop{\mathrm{Sing}}(v) \sqcup \mathrm{P}$ 
consists of finitely many orbits and 
is the union of centers and 
the multi-saddle connection diagram. 
Therefore 
a quotient space  
$(\mathop{\mathrm{Sing}}(v) \sqcup \mathrm{P})/v$ is a finite multi-graph-like poset. 
Conversely, 
suppose that 
$(\mathop{\mathrm{Sing}}(v) \sqcup \mathrm{P})/v$ is a finite multi-graph-like poset. 
Then 
$\mathop{\mathrm{Sing}}(v) \sqcup \mathrm{P}$ 
consists of finitely many orbits. 
This means that  
$|\mathop{\mathrm{Sing}}(v)| < \infty$ 
and that $\mathrm{int}\mathrm{P}  = \emptyset$. 
By Lemma \ref{lem3-11}, we have that 
$v$ is non-wandering. 
\end{proof}

\begin{corollary}
Let $v$ be a non-wandering flow on a compact connected surface $S$ 
with $|\mathop{\mathrm{Sing}}(v)| < \infty$ 
and $N$ the union of periodic orbits with one-sided neighborhoods.  
Then each boundary component $C$ 
for a connected component of $\mathrm{LD}$ (resp. $\mathop{\mathrm{Per}}(v) - N$) 
is either an element of $V_{\mathrm{ex}}$ or 
a proper subset of a multi-saddle connection 
such that 
the quotient space $C/v$ is a finite multi-graph-like poset.   
\end{corollary}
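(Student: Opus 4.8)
The plan is to reduce the statement to the finite combinatorics of the multi-saddle connection diagram. First I would collect the consequences of the hypotheses. Since $v$ is non-wandering with $|\mathop{\mathrm{Sing}}(v)| < \infty$, Lemma~\ref{lem3-4} gives that every singular point is a topological center or a multi-saddle, that $\mathrm{P}$ is contained in the multi-saddle connection diagram $D$, and that $D$ is a finite multi-graph (finitely many multi-saddles, and finitely many separatrices by Proposition~2.6~\cite{Y}); moreover $\mathrm{E} = \emptyset$ by Lemma~\ref{lem3-11}. I would also record that $\mathrm{LD}$ and $\mathop{\mathrm{Per}}(v) - N$ are saturated, so their closures and boundaries are saturated; since orbits are connected, every connected component $C$ of such a boundary is a closed saturated set.

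Next I would localize the boundary. For a component of $\mathrm{LD}$, Lemmas~2.1 and 2.3~\cite{Y2} give $\overline{\mathrm{LD}} - \mathrm{LD} \subseteq \mathop{\mathrm{Sing}}(v) \sqcup \mathrm{P} \subseteq \mathop{\mathrm{Sing}}(v) \sqcup D$; for a component $W$ of $\mathop{\mathrm{Per}}(v) - N$ the closure structure of periodic regions in~\cite{Y} (together with $\mathrm{E} = \emptyset$) gives $\partial W \subseteq \mathop{\mathrm{Sing}}(v) \sqcup \mathrm{P} \sqcup N \subseteq \mathop{\mathrm{Sing}}(v) \sqcup D \sqcup N$. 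A boundary component $C$ meeting $N$ must be a single one-sided periodic orbit $O$: the circle $O$ has $\overline{W}$ on one side only, so a neighborhood of $O$ in $\partial W$ is $O$ itself, whence $C = O \in V_{\mathrm{ex}}$ and $C/v$ is a point. Any boundary component disjoint from $N$ is a connected subset of $\mathop{\mathrm{Sing}}(v) \sqcup D$.

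For such a $C$ I would exploit that each topological center is isolated in $\mathop{\mathrm{Sing}}(v) \sqcup D$, since it carries a saturated neighborhood of periodic orbits. Connectedness then forces either $C = \{c\}$ for a center $c$ (a quasi-center, hence an element of $V_{\mathrm{ex}}$ with $C/v$ a point) or $C \subseteq D$, in which case $C$ lies inside a single multi-saddle connection $P$. Here saturation and closedness are decisive: a closed saturated subset of the finite multi-graph $P$ is a union of whole separatrices together with the multi-saddles in their limit sets, hence a finite subcomplex of $P$. Thus $C$ is either all of $P$ (an element of $V_{\mathrm{ex}}$) or a proper subset of $P$, and in either case Lemma~\ref{lem01} applied to the finite multi-graph $C$ shows that $C/v = C/\sim_e$ is a finite multi-graph-like poset, which completes the argument.

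The step I expect to be the main obstacle is the localization of the boundary, namely showing that $\partial W$ (and the $\mathrm{LD}$-boundary) avoids locally dense and exceptional orbits so that it sits inside $\mathop{\mathrm{Sing}}(v) \sqcup D \sqcup N$. While $\mathrm{E} = \emptyset$ is immediate from non-wandering, excluding locally dense points from the closure of a periodic region requires the transversal structure of surface flows: a limit of periodic orbits that were locally dense would produce a recurrent point accumulated by closed orbits, contradicting the separation of periodic regions from quasi-minimal sets by separatrices. Once this containment is in hand, the saturation argument that $C$ is a genuine subcomplex (ruling out partial arcs of separatrices) and the finiteness of $D$ make the remaining classification routine.
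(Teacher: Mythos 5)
Your argument follows essentially the same route as the paper's proof: localize the boundary component inside $\mathop{\mathrm{Sing}}(v) \sqcup D \sqcup N$ using Lemma~\ref{lem3-4}, Lemma~\ref{lem3-11}, and the separation of $\mathrm{LD}$ from $\mathop{\mathrm{Per}}(v)$, then classify $C$ by connectedness as a center, a one-sided periodic orbit, or a closed saturated subset of a multi-saddle connection. You supply somewhat more detail than the paper does (the isolation of centers, the subcomplex argument, and the explicit appeal to Lemma~\ref{lem01} for the multi-graph-like conclusion), but the approach is the same and correct.
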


\begin{proof} 
Let $C$ be a boundary component  
for a connected component of $\mathrm{LD}$ (resp. $\mathop{\mathrm{Per}}(v) - N$). 
Lemma 2.4 \cite{Y} implies $\mathop{\mathrm{Per}}(v)$ is open. 
By Lemma \ref{lem3-4}, 
the union $\mathop{\mathrm{Sing}}(v) \sqcup \mathrm{P}$ 
consists of centers and 
of the multi-saddle connection diagram. 
Since $|\mathop{\mathrm{Sing}}(v)| < \infty$, 
the multi-saddle connection diagram $D$ 
consists of finitely many closed multi-saddle connections. 
This means that 
the union $\mathop{\mathrm{Sing}}(v) \sqcup \mathrm{P}$ 
consists of finitely many closed extended orbits. 
By Lemma \ref{lem3-11}, 
we have $S = \mathop{\mathrm{Cl}}(v) \sqcup \delta \mathrm{P} \sqcup \mathrm{LD}$. 
Since $\overline{\mathrm{LD}}$ is the finite union of quasi-minimal sets, 
Lemma 2.1 \cite{Y} implies that 
$\overline{\mathrm{LD}} \cap \mathop{\mathrm{Per}}(v) = \emptyset$. 
By Lemma 2.3 \cite{Y}, we have 
$\overline{\mathop{\mathrm{Per}}(v)} \cap \mathrm{LD} = \emptyset$ 
and so 
$C \subseteq \partial (\mathop{\mathrm{Per}}(v) - N) \cup \partial \mathrm{LD} = 
\mathop{\mathrm{Sing}}_C(v) \sqcup D \sqcup N$,   
where $\mathrm{Sing}(v)_C$ is the set of centers. 
%
This implies that $C$ is either a center, a periodic orbit with a one-sided neighborhood, 
or a closed saturated subset of a multi-saddle connection. 
\end{proof}

We characterize 
non-wandering surface flow 
with finitely many singular points 
using extended orbit spaces.

\begin{theorem}\label{th11}
Let $v$ be a flow with $|\mathop{\mathrm{Sing}}(v)| < \infty$  
on a compact connected surface $S$.  
The following are equivalent: 

1) 
$v$ is non-wandering.  

2) 
$\mathrm{int}\mathrm{P}  = \emptyset$. 

3) 
$(S - \mathrm{LD})/v_{\mathrm{ex}}$ is an embedded multi-graph. 

4) 
$(S/v_{\mathrm{ex}})/\sim_E$ is a finite poset of height at most one. 

5) 
The multi-saddle connection diagram contains $\mathrm{P}$. 
\end{theorem}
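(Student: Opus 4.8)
The plan is to treat condition $2)$, the emptiness of $\mathrm{int}\,\mathrm{P}$, as the hub and reduce every other condition to it. The equivalence $1) \Leftrightarrow 2)$ is immediate from Lemma \ref{lem3-11}, which holds for an arbitrary flow on a compact surface and uses no finiteness hypothesis. First I would close the triangle $1) \Leftrightarrow 2) \Leftrightarrow 5)$: assuming $1)$, Lemma \ref{lem3-4} yields that $\mathop{\mathrm{Sing}}(v) \sqcup \mathrm{P}$ is the union of centers and of the multi-saddle connection diagram $D$, so in particular $\mathrm{P} \subseteq D$, which is $5)$; conversely, since $|\mathop{\mathrm{Sing}}(v)| < \infty$ forces $D$ to be a finite, hence nowhere dense, multi-graph, the inclusion $\mathrm{P} \subseteq D$ gives $\mathrm{int}\,\mathrm{P} = \emptyset$, which is $2)$.

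For the geometric conditions I would prove $1) \Rightarrow 3)$ and $1) \Rightarrow 4)$ by an explicit construction. Assuming $v$ non-wandering with finitely many singular points, Lemma \ref{lem3-11} gives $\mathrm{E} = \emptyset$ and $S = \mathop{\mathrm{Cl}}(v) \sqcup \mathrm{P} \sqcup \mathrm{LD}$, while Lemma \ref{lem3-4} and its finiteness consequences make $D$ a finite union of closed multi-saddle connections. Hence $S - \mathrm{LD} = \mathop{\mathrm{Cl}}(v) \sqcup \mathrm{P}$, and collapsing each closed multi-saddle connection and each center to a point leaves the periodic part $\mathop{\mathrm{Per}}(v)$, whose quotient is a $1$-manifold, as the edge set. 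The corollary preceding the theorem identifies each boundary component of a periodic annulus as an element of $V_{\mathrm{ex}}$ or a subset of a multi-saddle connection, which is exactly the incidence data needed to read off $(S - \mathrm{LD})/v_{\mathrm{ex}}$ as a multi-graph; embeddedness then follows from Lemma \ref{lem03} applied to $D/v$ together with the local product structure of $\mathop{\mathrm{Per}}(v)$ around the vertices. Collapsing the edges, namely the components of $\mathop{\mathrm{Per}}(v) - N$ together with the finitely many quasi-minimal sets forming $\mathrm{LD}$, then produces the finite poset of height at most one in $4)$.

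The converse implications $3) \Rightarrow 2)$ and $4) \Rightarrow 2)$ proceed by contradiction: I would assume $\mathrm{int}\,\mathrm{P} \neq \emptyset$ and produce an open saturated region $U$ of wandering proper orbits which, by nowhere-density of $D$, meets no multi-saddle connection and no locally dense orbit. For $4)$ this is quick: the orbits in $U$ give uncountably many pairwise distinct extended orbits that survive $\sim_E$, being neither periodic nor locally dense, contradicting finiteness of $(S/v_{\mathrm{ex}})/\sim_E$. For $3)$ a cardinality count is not enough, since a single edge of a multi-graph already carries uncountably many points; instead I would exploit that the orbits of $U$ must accumulate, in the compact surface, on the finitely many singular points or boundary circles, forcing infinitely many edges to be incident to a single vertex and thereby violating the local finiteness built into the definition of an embedded multi-graph (the height function on $\mathbb{S}^2$ is the model picture).

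The hard part is the construction $1) \Rightarrow 3)/4)$, and within it the finiteness of the combinatorial data: one must rule out infinitely many periodic annuli and infinitely many periodic orbits with one-sided neighborhoods. I expect this to rest on the fact that $\partial \mathop{\mathrm{Per}}(v) \subseteq \mathop{\mathrm{Sing}}(v) \sqcup D \sqcup N$ is governed by the finite sets of centers and multi-saddle connections, so that the complementary periodic regions and their one-sided boundary orbits are finite in number. Making the embeddedness of the resulting multi-graph, equivalently the Alexandroff topology via Lemma \ref{lem03}, rigorous near the multi-saddle connections, with the help of Lemma \ref{lem3-10} and Lemma \ref{lem15a}, is the most delicate point.
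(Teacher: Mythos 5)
Your overall architecture coincides with the paper's: $1)\Leftrightarrow 2)$ via Lemma \ref{lem3-11}, $1)\Leftrightarrow 5)$ via Lemma \ref{lem3-4} together with the nowhere-density of the finite multi-saddle connection diagram, $4)\Rightarrow 2)$ by producing uncountably many distinct extended orbits from $\mathrm{int}\,\mathrm{P}\neq\emptyset$, and $1)\Rightarrow 3),4)$ by exhibiting the vertices (centers, multi-saddle connections, periodic orbits with one-sided neighborhoods) and edges (connected components of $\mathop{\mathrm{Per}}(v)-N$), with finiteness of the resulting poset coming from the finiteness of the set of connected components of $S-(\mathop{\mathrm{Sing}}(v)\cup D)$. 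These parts track the paper's proof closely and are sound.

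The genuine gap is in your $3)\Rightarrow 2)$. You correctly observe that a raw cardinality count fails, but the substitute you propose --- that the wandering orbits accumulate on the finitely many singular points and thereby force ``infinitely many edges incident to a single vertex,'' contradicting local finiteness --- does not work as stated. First, the paper's definition of an embedded multi-graph does not forbid infinitely many edges at a vertex: the required neighborhood of a vertex must only avoid \emph{other} vertices and edges \emph{not} incident to it, so the contradiction you aim for is not available from the definition. Second, the orbits filling $\mathrm{int}\,\mathrm{P}$ need not appear as edges at all; the actual obstruction is a separation axiom. A multi-graph is $T_2$, in particular $T_1$, so if $(S-\mathrm{LD})/v_{\mathrm{ex}}$ is a multi-graph then every extended orbit in $S-\mathrm{LD}$ is closed in $S-\mathrm{LD}$. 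A non-closed proper orbit $O$ not contained in any multi-saddle connection is its own extended orbit, and $\overline{O}-O$ contains a singular point lying in $S-\mathrm{LD}$, so $O$ is not closed there. Hence $\mathrm{P}$ must be contained in the multi-saddle connection diagram $D$, which, since $|\mathop{\mathrm{Sing}}(v)|<\infty$, is a finite and hence nowhere dense union of closed multi-saddle connections; therefore $\mathrm{int}\,\mathrm{P}=\emptyset$. This is the route the paper takes (compare Lemma \ref{lem15a}), and it replaces your accumulation argument entirely. A smaller remark: your worry about ruling out infinitely many periodic annuli is only relevant to condition $4)$, since condition $3)$ does not assert finiteness of the multi-graph; for $4)$ the needed finiteness is exactly the finiteness of the components of $\mathop{\mathrm{Per}}(v)\sqcup\mathrm{LD}$ supplied by Proposition 2.6 of \cite{Y}, which you should cite rather than leave as something you ``expect.''
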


\begin{proof}
By Lemma \ref{lem3-4}, 
the conditions $1)$ and $5)$ are equivalent. 
We may assume that $v$ is non-trivial.
%
Let $p: S \to (S/v_{\mathrm{ex}})/\sim_E$ be the canonical projection. 
As above, we may assume that 
$v$ is not a rational rotation on $\T^2$. 
Lemma \ref{lem3-11} implies that 
the conditions $1)$ and $2)$ are equivalent. 
Suppose that 
$(S - \mathrm{LD})/v_{\mathrm{ex}}$ is an embedded multi-graph. 
Then 
each non-closed proper orbit is contained in a closed multi-saddle connection diagram. 
Since $|\mathop{\mathrm{Sing}}(v)| < \infty$, 
we have $\mathrm{int}\mathrm{P}  = \emptyset$ 
and so
$v$ is non-wandering. 
Suppose that 
$(S/v_{\mathrm{ex}})/\sim_E$ is a finite poset of height at most one. 
If $\mathrm{int}\mathrm{P} \neq \emptyset$, 
then $p(\mathrm{P})$ contains uncountably many height one elements. 
Thus $\mathrm{int}\mathrm{P}  = \emptyset$. 
%
Conversely, 
suppose that 
$v$ is non-wandering. 
Then the union of proper orbit  is $\mathrm{Pr} 
= \mathop{\mathrm{Cl}}(v) \sqcup \mathrm{P} = S - \mathrm{LD}$. 
Since $\mathop{\mathrm{Sing}}(v)$ is finite, 
Theorem 3\cite{CGL} implies that 
each singular point is 
either topological center or a multi-saddle. 
Proposition 2.6 \cite{Y} implies that 
$\mathrm{P}$ is contained in the multi-saddle connection diagram 
and so that 
$\mathrm{Pr}/v_{\mathrm{ex}} = 
(\mathop{\mathrm{Cl}}(v) \sqcup \mathrm{P})/v_{\mathrm{ex}}$ 
is $T_1$. 
Since the complement of the union 
of topological centers 
and 
of the multi-saddle connection diagram 
is $\mathop{\mathrm{Per}}(v) \sqcup \mathrm{LD}$, 
the quotient space $\mathrm{Pr}/v_{\mathrm{ex}}$ is a multi-graph 
each of whose vertices is either 
a topological center, 
a multi-saddle connection, 
or a periodic orbit with a one-sided neighborhood, 
and 
each of whose edges is a connected component of 
$\mathop{\mathrm{Per}}(v) - N$
where $N$ is the union of periodic orbits with one-sided neighborhoods. 
Since $\mathop{\mathrm{Sing}}(v)$ is finite, 
$\mathrm{Pr}/v_{\mathrm{ex}}$ is embedded. 
By Theorem 2.5 and Corollary 2.9 \cite{Y}, 
the unions $\mathop{\mathrm{Per}}(v)$ and $\mathrm{LD}$ 
are open. 
Therefore 
the complement of the union of $\mathop{\mathrm{Sing}}(v)$ and 
of the multi-saddle connection diagram 
consists of finitely many connected components 
each of which is contained in $\mathop{\mathrm{Per}}(v)$ or $\mathrm{LD}$.  
Moreover the boundary $\partial \mathrm{LD} = \overline{\mathrm{LD}} - \mathrm{LD}$ 
is contained in the finite union of 
singular points and multi-saddle connections. 
Then $p(\mathrm{LD})$ consists of finitely many height one elements  
and so $(S/v_{\mathrm{ex}})/\sim_E$ is a finite poset of height at most one. 
\end{proof}

The finiteness in the previous theorem is necessary, 
because of 
the previous real analytic non-wandering toral flow with $\mathrm{LD} = \emptyset$ 
whose extended orbit space is not a multi-graph. 
Moreover, 
there is a  non-wandering flow $v$ on a compact connected surface $S$ with $\mathrm{LD} = \emptyset$ such that 
$S/v_{\mathrm{ex}}$ is not $T_1$ but 
$\mathop{\mathrm{Sing}}(v)$ is totally disconnected. 
Indeed, 
consider an rotation $v_0$ with respect to an axis on $\S^2$ 
and a sequence $(x_n)_{n \in \Z_{>0}}$ of points converging to a periodic point $x$ 
such that 
$O_{v_0}(x_m) \neq O_{v_0}(x_n)$ for 
any $m \neq n \in \Z_{>0}$. 
Using a bump function and 
replacing the orbit of $x_n$ (resp. $x$) into a $0$-saddle (resp. singular point) with a homoclinic orbit, 
the resulting flow $v$ is a non-wandering flow with 
infinitely many singular points such that 
the extended orbit $(O_v)_{\mathrm{ex}} (y)$ by $v$ of a point $y \notin O_{v_0}(x)$ 
is the orbit $O_{v_0}(y)$ of $y$ by $v_0$ 
but 
an (extended) orbit $O_{v_0}(x) - \{ x \}$ of $v$ is not closed. 
%
%
%
We characterize 
non-wandering surface flow with
finitely many singular points 
using quotient spaces.

\begin{theorem}\label{th12}
Let $v$ be a flow 
on a compact connected surface $S$. 
The following are equivalent: 

1) 
The flow $v$ is non-wandering with 
$|\mathop{\mathrm{Sing}}(v)| < \infty$. 

2) 
The topology of $(S/\hat{v}_{\mathrm{ex}})/\sim_E$ is 
Alexandroff 
with respect to the specialization order.  

3) 
$(S/\hat{v}_{\mathrm{ex}})/\sim_E$ is a finite connected poset of height  at most one.

4) 
$(\mathop{\mathrm{Sing}}(v) \sqcup \mathrm{P})/v$ is a finite multi-graph-like poset. 
\\
In any case, 
the quotient space $(S/\hat{v}_{\mathrm{ex}})/\sim_E$ is a multi-graph-like connected finite poset 
if $\mathrm{LD} = \emptyset$.  
\end{theorem}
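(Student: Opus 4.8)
The plan is to take $(1)\Leftrightarrow(4)$ directly from Lemma \ref{lem3-4}, and to reduce the conditions involving $(S/\hat{v}_{\mathrm{ex}})/\sim_E$ to the extended orbit space version already settled in Theorem \ref{th11}. After disposing of the trivial, minimal and rational rotation cases by hand, I would assume $v$ non-trivial. The crucial preliminary is the identification $(S/\hat{v}_{\mathrm{ex}})/\sim_E=(S/v_{\mathrm{ex}})/\sim_E$ whenever $\mathrm{E}=\emptyset$: by Proposition 2.2 \cite{Y} (as used in Lemma \ref{lem16}) the extended class of a proper orbit is that orbit itself and the extended class of a multi-saddle connection is itself, so the extended class relation refines the extended orbit relation only inside $\mathrm{LD}\sqcup\mathrm{E}$, where orbits of equal closure lie in a single connected component of $\mathrm{LD}$ (using openness of $\mathrm{LD}$ and finiteness of quasi-minimal sets). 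Since $\sim_E$ already crushes every connected component of $\mathrm{LD}\sqcup(\mathop{\mathrm{Per}}(v)-N)$ to a point, the two composite partitions of $S$ agree, and by transitivity of quotient topologies the two spaces are homeomorphic. Thus under $\mathrm{E}=\emptyset$ I may replace $(S/\hat{v}_{\mathrm{ex}})/\sim_E$ by $(S/v_{\mathrm{ex}})/\sim_E$ throughout.

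With this I would run the cycle $(1)\Rightarrow(3)\Rightarrow(2)\Rightarrow(1)$. For $(1)\Rightarrow(3)$, a non-wandering flow with finitely many singular points has $\mathrm{E}=\emptyset$ by Lemma \ref{lem3-11}, so the identification applies and Theorem \ref{th11} yields that $(S/v_{\mathrm{ex}})/\sim_E$ is a finite poset of height at most one; being the continuous image of the connected surface $S$ it is connected, giving $(3)$. The step $(3)\Rightarrow(2)$ is purely topological: a finite $T_0$ space carries exactly the Alexandroff topology of its specialization order, so a finite connected poset of height at most one automatically has Alexandroff topology.

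The main obstacle is $(2)\Rightarrow(1)$, extracting non-wandering behaviour and finiteness from the bare Alexandroff hypothesis, which I would handle by contraposition. If $v$ is wandering then $\mathrm{int}\,\mathrm{P}\neq\emptyset$ by Lemma \ref{lem3-11}, and an open flow box of non-closed proper orbits survives untouched by both the class relation and $\sim_E$ (the latter only collapses $\mathrm{LD}\sqcup(\mathop{\mathrm{Per}}(v)-N)$), projecting to an infinite connected $T_1$ subspace; since a $T_1$ Alexandroff space is discrete while an infinite discrete space is disconnected, this contradicts $(2)$, so $v$ is non-wandering. For finiteness, assuming $|\mathop{\mathrm{Sing}}(v)|=\infty$ I would use compactness to produce distinct singular points $x_n$ converging to a non-isolated (hence non-multi-saddle) singular point $x_0$; continuity forces $[x_n]\to[x_0]$ in the quotient, and since in an Alexandroff space the minimal open neighbourhood $\mathop{\uparrow}[x_0]$ must swallow a tail of any such sequence, one obtains $[x_0]\leq[x_n]$, i.e. $x_0\in\overline{\hat{O}_{\mathrm{ex}}(x_n)}$, for infinitely many $n$. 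Tracking this containment through the connected-limit-set construction and the two-dimensional Cantor-manifold property exactly as in the proof of Lemma \ref{lem3-10} produces a non-degenerate connected piece of singular points or a point admitting no minimal neighbourhood, contradicting Alexandroffness. This accumulation analysis is the delicate part of the theorem.

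Finally, for the ``in any case'' clause I assume the equivalent conditions together with $\mathrm{LD}=\emptyset$; then $\mathrm{E}=\emptyset$ as well, and since finitely many singular points force $\mathrm{P}\subset D$ with all multi-saddle connections closed (Lemma \ref{lem3-4}), Lemma \ref{lem15a} gives that $S/v_{\mathrm{ex}}$ is $T_1$ with $S/\hat{v}_{\mathrm{ex}}=S/v_{\mathrm{ex}}$, which Theorem \ref{th11} identifies with an embedded multi-graph. Here $\sim_E$ coincides with the edge collapse $\sim_e$, so Lemma \ref{lem01} shows $(S/\hat{v}_{\mathrm{ex}})/\sim_E$ is a multi-graph-like poset; moreover every edge is a periodic annulus with at most two boundary components, whence $|\mathop{\downarrow}e|\leq 3$ for each edge $e$, confirming that the connected finite poset $(S/\hat{v}_{\mathrm{ex}})/\sim_E$ is multi-graph-like.
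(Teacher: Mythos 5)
Your reductions of $(1)\Leftrightarrow(4)$ to Lemma \ref{lem3-4}, of $(1)\Rightarrow(3)$ to Theorem \ref{th11} via the identification $(S/\hat{v}_{\mathrm{ex}})/\sim_E=(S/v_{\mathrm{ex}})/\sim_E$, and the observation that a finite $T_0$-space is automatically Alexandroff for $(3)\Rightarrow(2)$ are all reasonable, and the last of these is cleaner than anything in the paper. But there is a genuine gap in $(2)\Rightarrow(1)$, precisely at the step you yourself flag as ``the delicate part'': deducing $|\mathop{\mathrm{Sing}}(v)|<\infty$ from the Alexandroff hypothesis. Your accumulation argument gets as far as $[x_0]\leq[x_n]$ for a tail of a convergent sequence of distinct singular points, but this is not yet a contradiction: if the $x_n$ are multi-saddles lying in (one or infinitely many) non-closed multi-saddle connections whose closures contain $x_0$, then $\mathop{\uparrow}[x_0]$ simply contains infinitely many elements, which is perfectly compatible with being Alexandroff. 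The appeal to ``the connected-limit-set construction and the two-dimensional Cantor-manifold property exactly as in the proof of Lemma \ref{lem3-10}'' does not transfer, because that lemma works under the standing hypothesis that $S/v_{\mathrm{ex}}$ is already $T_1$ (all multi-saddle connections closed), which is exactly what you do not yet know here. The paper's own argument is entirely different and much shorter: in an Alexandroff space the set $Q_0$ of minimal elements is discrete (the minimal open set $\mathop{\uparrow}x$ of a minimal $x$ meets $Q_0$ only in $x$), the image of $\mathop{\mathrm{Sing}}(v)$ is a compact subset of $Q_0$, and a compact discrete space is finite.

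A secondary, fixable issue is the non-wandering half of $(2)\Rightarrow(1)$. You assert that a flow box in $\mathrm{int}\,\mathrm{P}$ projects to a $T_1$ subspace, but $T_1$-ness of the image of a non-saturated set in the subspace topology from $Q$ is exactly what needs proof; the required input is the relative closedness $O\cap\mathrm{Sat}_v(V)=\overline{O}\cap\mathrm{Sat}_v(V)$ for orbits in the saturation of a wandering domain $V$ (which the paper establishes explicitly), and one should pass to the open saturated set $U:=\mathrm{Sat}_v(V)\setminus D$ so that its image is an open subspace of $Q$ carrying the quotient topology. With that supplied, your ``$T_1$ Alexandroff $\Rightarrow$ discrete, but the image of a connected flow box cannot be discrete'' argument does close, and is a legitimate alternative to the paper's route (which instead uses that $p\bigl(\bigcup\{\overline{O}\mid O\subset U-O_0\}\bigr)$ is a union of downsets, hence closed in an Alexandroff space, contradicting $O_0\subset\overline{W}-W$). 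As written, however, the proposal does not constitute a complete proof of $(2)\Rightarrow(1)$.
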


\begin{proof}
Obviously, the condition 4) implies 1). 
By Theorem \ref{th11}, 
the conditions 1) and 4) are equivalent. 
Let $Q := (S/\hat{v}_{\mathrm{ex}})/\sim_E$ be the poset with respect to the specialization order 
and $p: S \to Q$ the canonical projection. 
Suppose that 
$v$ is non-wandering with 
$|\mathop{\mathrm{Sing}}(v)| < \infty$. 
Let $D$ be the multi-saddle connection diagram. 
By Proposition 2.6 \cite{Y}, 
the complement $S - (\mathop{\mathrm{Sing}}(v) \cup D) = \mathop{\mathrm{Per}}(v) \sqcup \mathrm{LD}$ 
has finitely many connected components. 
Since $|\mathop{\mathrm{Sing}}(v)| < \infty$, 
Corollary 2.9 \cite{Y} implies that 
$\mathop{\mathrm{Per}}(v)$ and $\mathrm{LD}$ are open. 
%
Since $\mathop{\mathrm{Sing}}(v)$ is finite and 
the complement $S - (\mathop{\mathrm{Sing}}(v) \cup D) = \mathop{\mathrm{Per}}(v) \sqcup \mathrm{LD}$ 
has finitely many connected components, 
the quotient space 
$Q$ 
is a finite connected poset of height  at most one. 
Moreover, 
it is a multi-graph-like finite poset if $\mathrm{LD} = \emptyset$. 
Since each connected component of $\mathop{\mathrm{Per}}(v)$ (resp. $\mathrm{LD}$) is open, 
the topology of 
$Q$ 
is Alexandroff with respect to the specialization order. 
Suppose that 
the topology of $Q$ 
is 
Alexandroff 
with respect to the specialization order. 
Then the restriction $Q_0$ to the set of height $0$ elements  
is a discrete topological space. 
Since $\mathop{\mathrm{Sing}}(v) \subseteq Q_0$ is compact, 
we have $|\mathop{\mathrm{Sing}}(v)| < \infty$. 
Assume that 
$v$ is not non-wandering (i.e. wandering). 
In other words,  
there is an open wandering domain $V \neq \emptyset \subseteq \mathrm{int} \mathrm{P}$ 
(i.e. 
there is $N \in \R$ such that $v_t(V) \cap V = \emptyset$ for any $t > N$). 
For each orbit $O \subset  \mathrm{Sat}_v(V)$, 
we have $O \cap V = \overline{O} \cap V$ 
and so 
$O \cap \mathrm{Sat}_v(V) = \overline{O} \cap \mathrm{Sat}_v(V)$. 
Then $U := \mathrm{Sat}_v(V) \setminus D \subseteq \mathrm{int} \mathrm{P}$ is an open saturated subset, 
where $D$ is the multi-saddle connection diagram. 
Fix an orbit $O_0 \subset U$.  
%
%
Write $W := \bigcup \{ \overline{O} \mid O \subset U - O_0 \}$. 
Then $O_0 \subset \overline{W} - W$. 
Since the topology of $(S/\hat{v}_{\mathrm{ex}})/\sim_E$ is Alexandroff, 
the image $p(W)$ is closed and so 
the inverse image $W$ is also closed, which contradicts to $O_0 \subset \overline{W} - W$. 
Thus $v$ is non-wandering. 
Suppose that 
$Q$ is a finite connected poset of height  at most one. 
Since $\mathop{\mathrm{Sing}}(v) \subseteq Q_0$, 
we have $|\mathop{\mathrm{Sing}}(v)| < \infty$. 
Assume that $\mathrm{int}\mathrm{P} \neq \emptyset$. 
Since multi-saddles are countable, 
there are uncountably many orbits of height one in $\mathrm{P}$, 
which contradicts to the finiteness of $Q$. 
Thus $\mathrm{int}\mathrm{P} = \emptyset$ 
and so the flow $v$ is non-wandering. 
\end{proof}

Considering a compact connected surface $S \subseteq \S^2$, 
Theorem \ref{th11} and \ref{th12} 
imply the following statement. 



\begin{corollary}
Let $v$ be a flow on a compact connected surface $S \subseteq \S^2$. 
The following are equivalent: 

1) 
The flow $v$ is non-wandering with 
$|\mathop{\mathrm{Sing}}(v)| < \infty$. 

2) 
The quotient space $(S/v_{\mathrm{ex}})/\sim_E$ 
is a tree-like finite poset of height one.   

3) 
The topology of $(S/{v}_{\mathrm{ex}})/\sim_E$ is 
Alexandroff with respect to the specialization order.  

4) 
$(S - \mathop{\mathrm{Per}}(v))/v$ is a finite multi-graph-like poset. 
\end{corollary}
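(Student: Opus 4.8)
The plan is to deduce the four equivalences from Theorems \ref{th11} and \ref{th12}, feeding in the single geometric fact that separates the sphere from higher genus: planarity forbids both nontrivial recurrence and graph cycles. First I would record the standing simplifications available on a planar surface. Since $S \subseteq \S^2$, the Poincar\'e--Bendixson theorem rules out nontrivially recurrent orbits, so $\mathrm{LD} = \mathrm{E} = \emptyset$. Two bookkeeping consequences follow at once. Extended orbit classes can differ from extended orbits only along locally dense or exceptional orbits, so $S/v_{\mathrm{ex}} = S/\hat{v}_{\mathrm{ex}}$; and from $S = \mathop{\mathrm{Sing}}(v) \sqcup \mathop{\mathrm{Per}}(v) \sqcup \mathrm{P} \sqcup \mathrm{LD} \sqcup \mathrm{E}$ we obtain $S - \mathop{\mathrm{Per}}(v) = \mathop{\mathrm{Sing}}(v) \sqcup \mathrm{P}$. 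Moreover the ``if $\mathrm{LD} = \emptyset$'' clause of Theorem \ref{th12} guarantees that, as soon as the relevant quotient is a finite poset of height at most one, it is automatically multi-graph-like.

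With these identifications the equivalences $(1)\Leftrightarrow(3)$, $(1)\Leftrightarrow(4)$, and most of $(1)\Leftrightarrow(2)$ are transcriptions of Theorem \ref{th12}. Condition $(4)$, asking that $(S - \mathop{\mathrm{Per}}(v))/v = (\mathop{\mathrm{Sing}}(v) \sqcup \mathrm{P})/v$ be a finite multi-graph-like poset, is exactly condition $(4)$ of Theorem \ref{th12}; condition $(3)$, the Alexandroff property of $(S/v_{\mathrm{ex}})/\sim_E = (S/\hat{v}_{\mathrm{ex}})/\sim_E$, is condition $(2)$ of Theorem \ref{th12}; and the assertion that $(S/v_{\mathrm{ex}})/\sim_E$ is a finite connected poset of height at most one is condition $(3)$ of Theorem \ref{th12}, which is multi-graph-like by the remark above and has height exactly one once $v$ is non-trivial, since a periodic annulus (hence an edge) then exists. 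Thus the only part of $(2)$ not already furnished by Theorem \ref{th12} is the word \emph{tree-like}.

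The hard part is therefore to show the finite connected multi-graph $(S/v_{\mathrm{ex}})/\sim_E$ is acyclic, and this is exactly where $S \subseteq \S^2$ is used essentially. By condition $(3)$ of Theorem \ref{th11} together with $\mathrm{LD} = \emptyset$, the space $S/v_{\mathrm{ex}} = (S-\mathrm{LD})/v_{\mathrm{ex}}$ is an embedded multi-graph whose edges are the periodic annuli and whose vertices are the quasi-centers, multi-saddle connections, and one-sided periodic orbits. I would argue by contradiction: a cycle in the abstract multi-graph yields a cyclic chain of periodic annuli, and choosing a transversal arc across each annulus of the chain and joining consecutive arcs through the intervening vertex regions produces a simple closed curve $\gamma$ in $S$. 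By construction $\gamma$ meets a core periodic orbit $O$ of one annulus of the chain transversally in a single point and meets $O$ nowhere else, so the mod-$2$ intersection number is $\gamma \cdot O \equiv 1$; hence $\gamma$ is non-separating in $S$. But every simple closed curve in $S \subseteq \S^2$ separates, since it separates $\S^2$ by the Jordan curve theorem and therefore separates $S$, a contradiction. Consequently the graph is a tree, which completes $(2)$. The principal obstacle is precisely this genus-$0$ step: the transversal-arc construction must be arranged so that $\gamma$ is genuinely simple and meets the chosen core orbit in a single transverse point, including the degenerate cases of loop edges and of multiple edges between the same pair of vertices, because it is this intersection-parity computation that converts a combinatorial cycle into a forbidden non-separating curve; everything else is a direct appeal to the two preceding theorems.
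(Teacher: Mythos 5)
Your proposal is correct and follows the same route as the paper, which simply derives this corollary from Theorems \ref{th11} and \ref{th12} after observing that $S \subseteq \S^2$ forces $\mathrm{LD} = \mathrm{E} = \emptyset$ (hence $S - \mathop{\mathrm{Per}}(v) = \mathop{\mathrm{Sing}}(v) \sqcup \mathrm{P}$ and $S/v_{\mathrm{ex}} = S/\hat{v}_{\mathrm{ex}}$). The one point the paper leaves as an unproved remark --- that $(S/v_{\mathrm{ex}})/\sim_E$ is a tree in the planar case --- you justify explicitly via the non-separating-curve/intersection-parity argument, which is a sound and welcome addition.
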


%
%
%
%

The finiteness in the previous corollary is necessary, 
because of the example after Lemma \ref{lem15a}.  

\section{Graph representations of non-wandering surface flows}

Theorem \ref{th12} implies that 
a non-wandering flow with finitely many singular points 
but without locally dense orbits 
on a compact surface 
can be reconstructed by the extended orbit space with labels  
and the multi-saddle connection diagram $D$.  
%
%
To state precisely, 
we state one lemma and 
define some notations as follows:  
Denote by 
$\chi$
the set of non-wandering flows 
with 
$|\mathop{\mathrm{Sing}}(v)| < \infty$ and 
$\mathrm{LD} = \emptyset$ 
on a compact surface $S$. 
We summarize the properties of flows in $\chi$ as follows. 

\begin{lemma}
Let $D$ be the multi-saddle connection diagram of a flow $v \in \chi$ and 
let $\mathrm{Sing}(v)_C$ be the set of centers. 
Then the followings hold: 

1) 
$S = \mathrm{Sing}(v) \sqcup \mathrm{Per}(v) \sqcup \delta \mathrm{P}
= \mathrm{Sing}(v)_C \sqcup \mathrm{Per}(v) \sqcup D$. 

2) 
Both $S/v_{\mathrm{ex}}$ and $D$ are embedded multi-graphs. 

3) 
Both $(S/v_{\mathrm{ex}})/\sim_E$ and $D/v$ are finite multi-graph-like posets. 
%
%
%
\end{lemma}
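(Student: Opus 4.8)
The plan is to prove the three assertions for a flow $v \in \chi$ by assembling the characterizations already established in Sections 3--5. Throughout, $v$ is non-wandering with $|\mathop{\mathrm{Sing}}(v)| < \infty$ and $\mathrm{LD} = \emptyset$ on a compact surface $S$; I may assume $v$ is non-trivial, since for trivial flows the statements are either vacuous or immediate from the remarks following the definition of triviality. I would treat each of the three items in turn, since each one follows from a distinct cluster of earlier lemmas.

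**Proof of assertion 1).**

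First I would establish the decomposition $S = \mathrm{Sing}(v) \sqcup \mathrm{Per}(v) \sqcup \delta \mathrm{P} = \mathrm{Sing}(v)_C \sqcup \mathrm{Per}(v) \sqcup D$. The starting point is Lemma \ref{lem3-4}: since $v$ is non-wandering with finitely many singular points, its conclusion gives that $\mathop{\mathrm{Sing}}(v) \sqcup \mathrm{P}$ consists of centers together with the multi-saddle connection diagram $D$. Because $\mathrm{LD} = \emptyset$, Lemma \ref{lem3-11} (applied with $\mathrm{LD} = \emptyset$) yields $S = \mathop{\mathrm{Cl}}(v) \sqcup \delta \mathrm{P} = \mathop{\mathrm{Sing}}(v) \sqcup \mathop{\mathrm{Per}}(v) \sqcup \delta \mathrm{P}$, which is the first equality. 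For the second equality I would note that the finiteness of $\mathop{\mathrm{Sing}}(v)$ forces every multi-saddle connection to be closed, so by Lemma \ref{lem3-4} the non-center singular points together with $\mathrm{P}$ are exactly the points of $D$; rewriting $\mathop{\mathrm{Sing}}(v) \sqcup \delta \mathrm{P}$ as $\mathrm{Sing}(v)_C \sqcup D$ then gives $S = \mathrm{Sing}(v)_C \sqcup \mathop{\mathrm{Per}}(v) \sqcup D$.

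**Proof of assertions 2) and 3).**

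For assertion 2), I would invoke Theorem \ref{th11}: since $v$ is non-wandering with $|\mathop{\mathrm{Sing}}(v)| < \infty$, condition 3) of that theorem tells us $(S - \mathrm{LD})/v_{\mathrm{ex}}$ is an embedded multi-graph, and as $\mathrm{LD} = \emptyset$ this is precisely $S/v_{\mathrm{ex}}$. That $D$ is an embedded multi-graph follows from the finiteness of singular points — each multi-saddle connection is closed and isolated, so $D$ satisfies the embedding condition; alternatively one applies Lemma \ref{lem03} together with the Alexandroff property of $D/v$. For assertion 3), $(S/v_{\mathrm{ex}})/\sim_E$ is a finite multi-graph-like poset by Theorem \ref{th12} (its final clause gives a multi-graph-like connected finite poset precisely when $\mathrm{LD} = \emptyset$), and $D/v$ is a finite multi-graph-like poset by Lemma \ref{lem3-4} condition 2) restricted to $D$, since $\mathrm{P} \subseteq D$ and the singular points in $D$ are the finitely many multi-saddles.

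**Main obstacle.**

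The routine part is the bookkeeping of the set-theoretic decompositions; the one place demanding genuine care is the identification $\delta \mathrm{P} \leftrightarrow D \setminus \mathrm{Sing}(v)_C$ in assertion 1). I expect the subtle point to be verifying that $\delta \mathrm{P}$, rather than $\mathrm{P}$ itself, is what sits inside the closed diagram $D$: since $v$ is non-wandering we have $\mathrm{int}\,\mathrm{P} = \emptyset$ by Lemma \ref{lem3-11}, so $\mathrm{P} = \delta \mathrm{P}$, and the finiteness of $\mathop{\mathrm{Sing}}(v)$ ensures $\mathrm{P}$ consists of finitely many separatrices whose closures meet only multi-saddles — hence $\mathrm{Sing}(v)_C \sqcup \mathop{\mathrm{Per}}(v) \sqcup D$ is genuinely a disjoint partition of $S$ with no overlap between the center set and $D$. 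Once this is pinned down, the remaining assertions are direct citations of the theorems above.
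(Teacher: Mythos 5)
Your proof is correct and is essentially the derivation the paper intends: the paper states this lemma purely as a summary (``We summarize the properties of flows in $\chi$ as follows'') and supplies no proof of its own, and the results you assemble --- Lemma \ref{lem3-4} and Lemma \ref{lem3-11} for the decomposition in 1), Theorem \ref{th11} (condition 3) with $\mathrm{LD}=\emptyset$) and Lemma \ref{lem03} for 2), and Theorem \ref{th12} together with the finiteness of $D/v$ for 3) --- are exactly the ingredients from which each assertion follows. The one case you do not explicitly set aside is a rational rotation on $\T^2$: it belongs to $\chi$ and is non-trivial in the paper's sense, yet $S/v_{\mathrm{ex}}\cong\S^1$ carries no distinguished vertex and $(S/v_{\mathrm{ex}})/\sim_E$ is a single point, so, as the paper does in the proofs of Theorems \ref{th11} and \ref{th12}, you should exclude periodic tori explicitly rather than only the identical and minimal flows.
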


Note that $G_v$ is an abstract multi-graph with labels. 
Let $\mathcal{G}$ be the set of finite abstract multi-graphs. 
For $H \in \mathcal{G}$, denote by  $\mathcal{C}_H$ 
the set of connected components of $H$. 
Define a set $\mathcal{P}$ of pairs of graphs in $\mathcal{G}$ with labels as follows: 
$(G, l_1, l_2, H, l_3, l_4) \in \mathcal{P}$ 
if 
$G, H \in \mathcal{G}$, 
$l_1: V(G) \to \{ c, b, n \} \sqcup \mathcal{C}_H$, 
$l_2: E(G) \to (V(G) \sqcup \mathcal{C}_H)^2$, 
$l_3: V(H) \to E(H)^*$, 
and $l_4: E(H) \to V(H)$, 
where $E(H)^* := \bigsqcup_{n \in \Z_{\geq 0}} E(H)^n/\sim_c$ 
and $\sim_c$ is a cyclic relation on $E(H)^n$.  
Two pairs 
$(G, l_1, l_2, H, l_3, l_4), 
(G', l'_1, l'_2, H', l'_3, l'_4) \in \mathcal{P}$ 
are isomorphic if 
there are two graph isomorphisms $g: G \to G'$ and $h: H \to H'$
such that 
$\widetilde{h} \circ l_1 = l'_1 \circ g$,  
$(h \sqcup g)^2 \circ l_1 = l'_1 \circ g$,  
$h \circ l_i = l'_i \circ h$ ($i = 3, 4$), 
where $\widetilde{h}: \{ c, b, n \} \sqcup \mathcal{C}_H \to \{ c, b, n \} \sqcup \mathcal{C}_G$ 
is the extension of $h$ whose restriction on $\{c, b, n\}$ 
is identical,  
and $(h \sqcup g)^2: (V(G) \sqcup \mathcal{C}_H)^2 \to (V(G') \sqcup \mathcal{C}_{H'})^2$ 
is the induced mapping by $g$ and $h$. 
Denote by $\widetilde{\mathcal{P}}$ the quotient set of $\mathcal{P}$ by the isomorphisms. 

Define a mapping 
$p: \chi \to \mathcal{P}$ by 
$p(v) := ( G_v, D_v) = (((S/v_{\mathrm{ex}})/\sim_E, l_{V}, l_{\mathrm{ex}}), (D/v, l_{Dv}, l_{De}))$ 
and 
an equivalent relation $\sim_+$ on $\chi$ 
by $v \sim_+ w$ if 
$v$ and $w$ is topologically equivalent 
(i.e. there is a homeomorphism $h: S \to S$ 
such that  
the image of an orbit of $v$ is an orbit of $w$ 
and that it preserves orientation of the orbits).  
Moreover define 
an equivalent relation $\sim$ on $\chi$ 
by $v \sim w$ if 
either  $v \sim_+ w$ or 
there is a homeomorphism $h: S \to S$ 
such that the image of an orbit of $v$ is an orbit of $w$ 
and that it reverses orientation of the orbits.  
Write $\chi_{\sim}^+$ (resp. $\chi_{\sim}$) by the quotient space 
of $\chi$ by the topological equivalence $\sim_+$ (resp. the equivalence $\sim$). 
Now we can state precisely that 
a non-wandering flow with finitely many singular points 
but without locally dense orbits 
on a compact surface 
can be reconstructed by the abstract multi-graph with labels 
and the abstract multi-saddle connection diagram with labels.  
On the other words, 
we obtain the following statement.

\begin{theorem}\label{th34}
The induced map $\widetilde{p}: \chi_{\sim}^+ \to \widetilde{\mathcal{P}}$ 
is well-defined and injective. 
\end{theorem}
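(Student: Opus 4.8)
The plan is to treat well-definedness and injectivity separately, the latter being the substantial direction.

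For well-definedness, suppose $v \sim_+ w$ via an orientation-preserving topological equivalence $\Phi : S \to S$. Since $\Phi$ carries orbits to orbits, it sends singular points to singular points preserving their degree, hence multi-saddles to multi-saddles and centers to centers; it also sends separatrices to separatrices, periodic orbits with one-sided neighborhoods to the same, and periodic annuli to periodic annuli. Passing to quotients, $\Phi$ induces a graph isomorphism $g$ of $(S/v_{\mathrm{ex}})/\sim_E$ and a graph isomorphism $h$ of $D/v$. Because $\Phi$ preserves orbit orientations, it respects the counterclockwise cyclic order of separatrices around each multi-saddle (so $l_{Dv}$ is preserved), the $\alpha$-limit assignment $l_{De}$, the vertex types recorded by $l_{V}$, and the ordered boundary pair $(\partial_- U/v, \partial_+ U/v)$ of each edge (so $l_{\mathrm{ex}}$ is preserved). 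Hence $(g,h)$ is an isomorphism in $\mathcal{P}$ between $p(v)$ and $p(w)$, and $\widetilde{p}$ descends to $\chi_{\sim}^+$.

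For injectivity, I would reverse this construction: given an isomorphism $(g,h)$ in $\mathcal{P}$ between $p(v)$ and $p(w)$, I build an orientation-preserving topological equivalence $\Phi : S \to S$. First I reconstruct each multi-saddle connection diagram as a plane graph carrying a flow from the labeled data $D_v$, using the reconstruction of $D$ as a plane graph recorded after the definition of $D_v$, and I use the label-preserving isomorphism $h$ of $D/v$ to obtain a flow-preserving homeomorphism between the two diagrams together with adapted neighborhoods. By Lemma \ref{lem3-4} the complement of the singular set and the diagram is $\mathop{\mathrm{Per}}(v)$ (since $\mathrm{LD} = \emptyset$), whose components are periodic annuli, together with centers; in the degenerate case $G_v$ is trivial and $v$ is a rational rotation on $\T^2$, handled directly since any two such flows are topologically equivalent. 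Each periodic annulus is an edge of $G_v$, matched by $g$ to a periodic annulus of $w$ with the same incident vertex labels $l_{V}$ and the same ordered boundary label $l_{\mathrm{ex}}$; using that a periodic annulus is topologically a product flow, I extend the homeomorphism already built on the incident polycycles and centers across each annulus so that the ordered label matches the flow directions on the two sides. Gluing these piecewise homeomorphisms along their common boundaries produces $\Phi$.

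The main obstacle will be the compatibility of the gluings along the boundary components of the periodic annuli. A component $\partial_{\sigma} U$ is in general a polycycle, a proper subconfiguration of a multi-saddle connection, so the boundary homeomorphism produced by the plane-graph reconstruction of $D$ must be reconciled with the one coming from the annulus side; this is precisely where the ordered label $l_{\mathrm{ex}}$ and the cyclic label $l_{Dv}$ are needed to force the two flow directions to agree and the separatrices to be matched in the correct planar cyclic order. Verifying that the finitely many local models around multi-saddles, centers, and one-sided periodic orbits are determined up to orientation-preserving homeomorphism by the recorded labels, and that these local matchings can be chosen compatibly, is the technical heart of the argument; once this is arranged, the global homeomorphism is assembled by a routine finite gluing over the graph $G_v$.
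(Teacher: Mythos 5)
Your proposal is correct and follows essentially the same route as the paper: the substance in both is a reconstruction argument in which the multi-saddle connection diagram is rebuilt as a directed plane graph from $D_v$, the vertices of $G_v$ are identified via $l_V$ with centers, one-sided periodic orbits, and polycycles, and the periodic annuli (the edges) are pasted in according to the ordered boundary labels $l_{\mathrm{ex}}$, yielding the flow uniquely up to topological equivalence. Your added discussion of well-definedness and of the boundary-gluing compatibility makes explicit points the paper leaves implicit, but the decomposition and the role of each label are the same.
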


\begin{proof}
Since $S$ has only finitely many connected components, 
we may assume that $S$ is connected. 
Fix an element $v \in \chi$. 
We will show that  we can reconstruct the topologically equivalent class $[v]$ from 
$p(v) = ( G_v, D_v) = (((S/v_{\mathrm{ex}})/\sim_E, l_{V}, l_{\mathrm{ex}}), (D/v, l_{Dv}, l_{De}))$.  
Let $r: S \to (S/v_{\mathrm{ex}})/\sim_E$ be the canonical projection 
and $Q := (S/v_{\mathrm{ex}})/\sim_E$. 
By Theorem \ref{th12}, 
the quotient space $Q$ is a multi-graph-like connected finite poset. 
Since an isolated point $x$ in $Q$ corresponds to a periodic torus, 
we may assume that $Q$ has no isolated points. 
From the finite data $D_v$, 
we can reconstruct the multi-saddle connection diagram $D$ as a directed plane graph 
up to topological equivalence. 
Let $\mathcal{D}$ be the set of connected components of 
the quotient space $D/v$ of the multi-saddle connection diagram $D$. 
Since the set $Q_0$ of height $0$ elements in $Q$ 
corresponds to $V_{\mathrm{ex}}$, 
by the labels in $\mathrm{Im} (l_{V}) = \{ c, n, b \} \sqcup \mathcal{D}$, 
we obtain centers 
(resp. periodic orbits with one-sided neighborhoods off $\partial S$, 
periodic orbits on $\partial S$, multi-saddle connections) 
as directed plane graphs from both $Q_0$ and $D$ up to topological equivalence. 
Using the labels in  $\mathrm{Im} (l_{\mathrm{ex}}) = 
\{ ( \partial_- U/v, \partial_+ U/v ) \mid U \in \widetilde{E_{\mathrm{ex}}}  \}$, 
since any element in $\mathrm{Im} (l_{\mathrm{ex}})$ is 
an ordered pair of two elements each of which is contained in $V_{\mathrm{ex}}$, 
paste periodic annuli, which correspond to the set $Q_1$ of height $1$ elements in $Q$, 
between all pairs in $\mathrm{Im} (l_{\mathrm{ex}})$. 
Thus we reconstruct the flow $v$ uniquely up to topological equivalence. 
\end{proof}

If $S$ is closed and orientable, then the label $l_{V}$ is not necessary in the previous theorem.  
The previous theorem implies the following statement. 

\begin{corollary}\label{cor35}
The set 
of topological equivalent classes of 
non-wandering flows with finitely many singular points but without locally dense orbits 
on compact surfaces is enumerable by combinatorial structures 
algorithmically. 
\end{corollary}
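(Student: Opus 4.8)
The plan is to derive the corollary directly from Theorem \ref{th34}, which furnishes a well-defined injection $\widetilde{p}: \chi_{\sim}^+ \to \widetilde{\mathcal{P}}$ of the set of topological equivalence classes into the set of isomorphism classes of labeled graph pairs. Since an injection into a countable set makes its domain countable, and since an effective listing of the target transports through the injection to a listing of the classes it encodes, the entire assertion reduces to exhibiting $\widetilde{\mathcal{P}}$ as a recursively enumerable collection of finite combinatorial objects with decidable isomorphism.

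First I would check that $\mathcal{P}$ is countable and effectively enumerable. An element of $\mathcal{P}$ is a tuple $(G, l_1, l_2, H, l_3, l_4)$ whose first and fourth coordinates $G, H$ are finite abstract multi-graphs; up to isomorphism there are only countably many of these and they can be listed algorithmically, say in order of increasing vertex and edge count (disconnected graphs, corresponding to disconnected compact surfaces, are included automatically). Once $G$ and $H$ are fixed, the codomains $\{c,b,n\} \sqcup \mathcal{C}_H$, $(V(G) \sqcup \mathcal{C}_H)^2$, $E(H)^*$, and $V(H)$ of the labels are all finite sets, so only finitely many label assignments $l_1, l_2, l_3, l_4$ are admissible. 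Hence $\mathcal{P}$ is a countable union of finite sets, and one obtains a recursive enumeration of $\mathcal{P}$.

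Next I would pass to isomorphism classes. The isomorphism relation on $\mathcal{P}$ from the excerpt asks for a pair of graph isomorphisms $g, h$ intertwining finitely many labels with values in finite sets, so it is a finite and hence decidable condition. Running through the enumeration of $\mathcal{P}$ and discarding each tuple isomorphic to one already emitted therefore yields a repetition-free recursive enumeration of $\widetilde{\mathcal{P}}$. Combined with the injectivity from Theorem \ref{th34}, the set $\chi_{\sim}^+$ is in bijection with the image $\widetilde{p}(\chi_{\sim}^+) \subseteq \widetilde{\mathcal{P}}$; in particular it is countable, and each topological equivalence class is named uniquely by a finite combinatorial structure, which is the asserted enumerability.

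The step I expect to be the main obstacle is making the word ``algorithmically'' fully precise, namely deciding which tuples of $\widetilde{\mathcal{P}}$ actually lie in the image of $\widetilde{p}$ so that the enumeration lists exactly the realizable classes rather than a superset of codes. For this I would invoke the reconstruction argument inside the proof of Theorem \ref{th34}: given a tuple, one attempts to build the multi-saddle connection diagram $D$ as a directed plane graph from $D_v$ and then to paste periodic annuli between the boundary data prescribed by $\mathrm{Im}(l_{\mathrm{ex}})$. A tuple is realizable precisely when this construction closes up to a flow on a compact surface, and verifying the required consistency --- embeddability of $D$, matching of orientations, and compatibility of the pairs in $\mathrm{Im}(l_{\mathrm{ex}})$ --- is again a finite check. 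Hence realizability is decidable, the enumeration of $\widetilde{p}(\chi_{\sim}^+)$ is effective, and the corollary follows.
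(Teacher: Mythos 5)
Your proposal is correct and follows essentially the same route as the paper: both reduce the corollary to the injection $\widetilde{p}$ of Theorem \ref{th34} together with the observation that the relevant data are finite combinatorial structures (finitely many label assignments once the finite graphs are fixed) that can be listed algorithmically. The only divergence is one of bookkeeping --- the paper enumerates compact surfaces (by genus, boundary components, connected components) and multi-saddle connection diagrams (by vertices and separatrices) directly, whereas you enumerate the abstract code set $\widetilde{\mathcal{P}}$ and then address decidability of the image of $\widetilde{p}$, a point the paper leaves implicit.
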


\begin{proof}
Since both $(S/v_{\mathrm{ex}})/\sim_E$ and $D/v$ for an element $v \in \chi$  
are finite topological spaces, 
the labels $l_{V}, l_{\mathrm{ex}}, l_{Dv}$, and $l_{De}$ 
are mapping between finite sets. 
Since the set of compact surfaces is enumerable 
by induction on numbers of boundaries, genus, and connected components 
and since the set of multi-saddle connection diagrams is 
enumerable by induction on numbers of vertices and of separatrices,  
the assertion holds. 
\end{proof}

Let $\mathrm{TOP}$ be the set of topological space 
and $\mathrm{TOP}_{\sim}$ the quotient space by homeomorpisms. 
This implies the following corollary.

\begin{corollary}\label{cor36}
The projection $\pi: \chi_{\sim} \to \mathrm{TOP}_{\sim}$ 
defined by $\pi([v]) := S/v$ 
is well-defined 
such that 
both $D/v$ and $\widetilde{l_{\mathrm{ex}}}$ can be constructed by 
the orbit space $S/v$. 
\end{corollary}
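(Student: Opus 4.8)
The plan is to work entirely inside $S/v$, which for a flow $v \in \chi$ is a $T_0$-space (since $\mathrm{LD} = \mathrm{E} = \emptyset$ forces $S = \mathrm{Pr}$, so every point of $S/v$ is $T_0$ by Lemma~\ref{lem16}), and to characterize both $D/v$ and $\widetilde{l_{\mathrm{ex}}}$ intrinsically from the topology of $S/v$ equipped with its specialization order. Well-definedness of $\pi$ is the easy half: if $v \sim w$, a homeomorphism $h \colon S \to S$ carrying $v$-orbits onto $w$-orbits---whether it preserves or reverses their orientation---descends to a homeomorphism $\bar h \colon S/v \to S/w$, because the orbit space forgets orientation of orbits; hence $\pi([v]) = [S/v]$ depends only on the $\sim$-class of $v$.

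To recover $D/v$, first I would observe that a point $[O] \in S/v$ is closed exactly when $O$ is a closed orbit, so the non-closed points of $S/v$ are precisely $p(\mathrm{P})$, where $p \colon S \to S/v$ is the projection. For $v \in \chi$, Theorem~\ref{th11} gives $\mathrm{P} \subset D$, and since every multi-saddle carries separatrices one has $\overline{\mathrm{P}} = D$; therefore $D/v = p(D) = \overline{p(\mathrm{P})}$ is reconstructed as the closure in $S/v$ of its set of non-closed points. Endowed with the induced specialization order this subspace is the multi-graph-like poset of the lemma of Section~3 on the multi-saddle connection diagram, whose minimal elements are the multi-saddles and whose height-one elements are the separatrices, so the abstract multi-graph $D/v$ is read off from $S/v$ alone.

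To recover $\widetilde{l_{\mathrm{ex}}}$, I would set $R := S/v \setminus (D/v)$; by Lemma~\ref{lem3-4} one has $S = \mathop{\mathrm{Sing}}(v)_C \sqcup \mathop{\mathrm{Per}}(v) \sqcup D$, whence $R = (\mathop{\mathrm{Sing}}(v)_C \sqcup \mathop{\mathrm{Per}}(v))/v$. Using Corollary~2.9 of \cite{Y}, which realizes each periodic component as an open annulus, open M\"obius band, torus, or Klein bottle, I would verify that $R$ is a $1$-manifold with boundary $\partial R = (\mathop{\mathrm{Sing}}(v)_C \sqcup N)/v$, that is, the boundary consists exactly of the centers and the periodic orbits with one-sided neighborhoods---precisely the orbits deleted in passing from $\mathop{\mathrm{Per}}(v)$ to $\mathop{\mathrm{Per}}(v) - N$. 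Consequently the connected components of $R \setminus \partial R$ are the images $[U]/v$ of the edges $U \in \widetilde{E_{\mathrm{ex}}}$. For an interval component $I$ its closure in $S/v$ adds two end-limit sets, namely $\partial_- U/v$ and $\partial_+ U/v$, each of which is either a boundary point of $R$ or a sub-poset of $D/v$; recording their unordered pair yields $\widetilde{l_{\mathrm{ex}}}(I)$, while a circle component (a periodic torus) receives $\emptyset$. Because orientation reversal merely interchanges the two ends, the unordered pair is unchanged, so $\widetilde{l_{\mathrm{ex}}}$ descends to $\chi_{\sim}$ and is determined by $S/v$.

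The hard part will be the structural claim about $R$: that it is a $1$-manifold with boundary equal to $(\mathop{\mathrm{Sing}}(v)_C \sqcup N)/v$, and that the end-limit set of each interior interval coincides with the required boundary image $\partial_{\sigma} U/v$. This demands a careful local analysis near the three kinds of boundary behaviour---a center, a one-sided periodic orbit, and a multi-saddle connection on which an interval of periodic orbits accumulates---together with extra care when an edge is a loop, so that its two ends may share the same limit set. It is exactly here that the finiteness of $\mathop{\mathrm{Sing}}(v)$ and the hypothesis $\mathrm{LD} = \emptyset$ are indispensable, since otherwise the complement of $D/v$ would fail to be $1$-dimensional and the accumulation sets of the ends would fail to be identifiable.
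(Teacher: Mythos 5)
Your proposal is correct and follows essentially the same route as the paper: recover $D/v$ as the closure of the set of topologically distinguished (non-closed) points of $S/v$, identify the centers and one-sided periodic orbits as the boundary points of the complementary $1$-manifold, and read off $\widetilde{l_{\mathrm{ex}}}$ from the unordered pair of ends of each complementary interval. The only difference is cosmetic: the paper takes $Z$ to be the non-$T_0$ points while you use the non-closed points, and since Lemma~\ref{lem16} makes every point of $S/v$ a $T_0$ point for $v\in\chi$, your formulation is the accurate one.
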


\begin{proof}
Fix a flow $v \in \chi$. 
Let $q_0 : S \to S/v$ be the canonical projection.
We may assume that $S$ is connected. 
If $S/v$ is a circle, then $v$ is a rational rotation. 
Thus we may assume that $v$ is not a rational rotation. 
Let $Z \subset S/v$ be the set of non-$T_0$ points. 
Then $q_0^{-1}(Z) = D \setminus \mathop{\mathrm{Sing}}(v)$ 
and so $q_0^{-1}(\overline{Z)} = D$. 
Therefore $\overline{Z}$ is a finite multi-graph-like poset 
such that 
the saddle connection diagram $D$ is a realization of the abstract multi-graph $\overline{Z}$. 
An element $[x] \in S/v - Z$ is called a boundary point 
if there is a \nbd $W$ of $[x]$ 
such that $(W, [x])$ is homeomorphic of $([0,1), 0)$. 
Collapsing each multi-saddle connection into a singleton, 
we obtain a canonical projection $q_1 : S/v \to S/v_{\mathrm{ex}}$. 
Let $B \subset S/v$ be the set of boundary points. 
Then define 
$V_{\mathrm{ex}} := q_1(B \sqcup \overline{Z})$.  
Then the inverse image $q_0^{-1}(B)$ 
is the set of centers and periodic orbits with one-sided neighborhoods. 
Denote by $E_{\mathrm{ex}}$ 
the set of connected component of 
$S/v_{\mathrm{ex}} - q_1(B \sqcup\overline{Z}) 
=  S/v - (B \sqcup \overline{Z})$. 
For an element $[U] \in E_{\mathrm{ex}}$, 
the inverse image $U := (q_1 \circ q_0)^{-1}([U])$ is an open annulus. 
%
%
Therefore 
the boundary $q_0(\partial U) \subset S/v$ consists of one or two elements 
$q_0(\partial_- U), q_0(\partial_+ U) \subseteq (B \sqcup \overline{Z})/v 
=  q_0 (\mathop{\mathrm{Sing}}(v) \sqcup \mathrm{P})$. 
Since $[U]$ is an open interval, 
there are exactly two boundary components $\partial_- [U]$ and $\partial_+ [U]$ of $[U]$ in $S/v$. 
Note the image $q_1(\partial_- [U])$ (resp. $q_1(\partial_+ [U])$) of any $[U] \in E_{\mathrm{ex}}$ is a singleton. 
Then we can define  
a label $\widetilde{l_{\mathrm{ex}}} : E_{\mathrm{ex}} \to \{ \{ \partial_- [U], \partial_+ [U] \} \mid [U] \in E_{\mathrm{ex}}  \}$ of $v$ 
by $\widetilde{l_{\mathrm{ex}}}([U]) :=  \{ \partial_- [U], \partial_+ [U] \}$. 
\end{proof}

Note the labels $l_{De}$, $l_{V}$, $l_{Dv}$ can't be reconstructed by the orbit space $S/v$ 
in general. 
Indeed, 
since the orbit space does not know the flow direction,  
the label $l_{De}$ can't be reconstructed by the orbit space $S/v$. 
%
Since the orbit spaces of center disks, 
of a periodic M\"obius band, 
and of an closed annulus are closed interval,  
the three spaces can't be distinguished by 
their orbit spaces 
and so 
the label $l_{V}$ need not be reconstructed. 
Moreover, 
the label $l_{Dv}$ can't be reconstructed 
(see Fig. \ref{fig05}). 
Recall that 
a continuous flow is regular if each singular point has a \nbd 
which is topologically equivalent to a \nbd of a non-degenerate singular point. 
Note that 
each non-wandering flow on a closed surface with finitely many singular points is regular 
if and only if 
each singular point is either a center or a saddle. 
Let $\chi_r \subset \chi$ be the subset of regular flows in $\chi$ 
and $\chi_{r\sim} \subset \chi_{\sim}$ the quotient space by the equivalence $\sim$. 
Then each regular non-wandering flow 
with finitely many singular points but without locally dense orbits 
can be reconstructed by the orbit spaces as follows. 

\begin{corollary}\label{cor36a}
Suppose that $S$ is an orientable closed surface. 
The projection $\pi: \chi_{r\sim} \to \mathrm{TOP}_{\sim}$ 
defined by $\pi([v]) := S/v$ is injective. 
\end{corollary}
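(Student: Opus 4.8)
The plan is to deduce injectivity from the reconstruction Theorem~\ref{th34}, which recovers the class up to $\sim_+$ from the full labelled datum $p(v)=(G_v,D_v)$ --- the graph $(S/v_{\mathrm{ex}})/\sim_E$ with labels $l_V,l_{\mathrm{ex}}$ together with the diagram $D/v$ with labels $l_{Dv},l_{De}$ --- by showing that, for a regular flow on a closed orientable surface, every piece of $p(v)$ not already visible in $S/v$ is visible up to an ambiguity that $\sim$ is free to undo. Fix $v,w\in\chi_r$ with $S/v\cong S/w$. Since $S$ is closed and orientable, the remark after Theorem~\ref{th34} lets me drop $l_V$ altogether. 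By Corollary~\ref{cor36} the orbit space already determines the abstract diagram $D/v$ --- the closure of the non-$T_0$ locus, realised as an embedded multi-graph --- and the order-forgetting label $\widetilde{l_{\mathrm{ex}}}$. Hence it remains to recover from $S/v$ only the direction label $l_{De}$, the ordering of the pairs in $l_{\mathrm{ex}}$, and the cyclic label $l_{Dv}$.

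First I would fix the time orientation. The orbit space forgets the direction of orbits, so rebuilding a flow from $D$ and the periodic annuli of $\widetilde{l_{\mathrm{ex}}}$ requires a coherent choice of direction along separatrices and annuli; on a connected surface carrying a single $\R$-action there are exactly two such choices, interchanged by time reversal, and they yield flows differing precisely by reversal of every orbit's orientation, hence $\sim$-equivalent. Fixing one of them determines $l_{De}(O)=\alpha(O)$ and the order in $l_{\mathrm{ex}}([U])=(\partial_-U/v,\partial_+U/v)$, so promoting $\widetilde{l_{\mathrm{ex}}}$ to $l_{\mathrm{ex}}$ costs only this single global reversal, which lies in $\sim$.

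The essential point is the cyclic label $l_{Dv}$, which the remark before the corollary shows is not recoverable in general. Here regularity is decisive: every vertex of $D$ is an ordinary saddle with four separatrices bounding four hyperbolic sectors. Taking a small saturated neighbourhood $N$ of such a saddle $s$, the open projection $q_0\colon S\to S/v$ makes $q_0(N)$ an open subset of $S/v$, equal to the quotient of the disk $N$ by the relation of lying on a common orbit, and reading its local topology recovers, for each of the four sectors, the unordered pair of separatrix classes bounding it --- that is, a four-edge multigraph on the separatrix classes through $s$. The cyclic order of the four germs is then a cyclic arrangement whose successive pairs realise this multigraph. I expect the main obstacle to be exactly here: homoclinic and heteroclinic connections identify germs, so one class may bound several sectors, and I must verify that for a degree-four saddle the framing multigraph still pins the cyclic order down up to reversal. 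This is a finite case analysis that genuinely uses $\deg s=4$ and that fails for higher multi-saddles, which is the source of the counterexample behind the general remark.

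Finally I would globalise. The reflection left undetermined at each saddle is not free: since both $v$ and $w$ live on orientable surfaces, their ribbon structures are coherently oriented, so the per-saddle reflections must agree up to a single global one, realised by conjugation with an orientation-reversing self-homeomorphism and hence producing a $\sim$-equivalent (indeed topologically equivalent) flow. Thus $S/v$ determines $p(v)$ up to the symmetries generated by global time reversal and global reflection, each of which moves a flow within its $\sim$-class. Applying Theorem~\ref{th34} to whichever representative datum is recovered then places $v$ and $w$ in the same $\sim$-class, so the assignment $[v]\mapsto S/v$ is injective on $\chi_{r\sim}$.
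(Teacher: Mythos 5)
Your overall skeleton (reduce to Theorem~\ref{th34} via Corollary~\ref{cor36}, drop $l_V$ on a closed orientable surface, absorb the time-direction ambiguity in $l_{De}$ and in the ordering of $l_{\mathrm{ex}}$ into the equivalence $\sim$) matches the paper's, but the argument breaks down exactly where you yourself flag the ``essential point'': the recovery of the cyclic label $l_{Dv}$. You propose to read, from the local topology of $q_0(N)$ for a saturated neighbourhood $N$ of a saddle, the unordered pair of separatrix classes bounding each of the four sectors, and then to check by a finite case analysis that this pins down the cyclic order up to reversal. You never carry out that verification, and the preliminary step is itself doubtful: points of $S/v$ are entire orbits, and in a non-wandering flow a single periodic orbit typically sweeps through several sectors of the same saddle before closing up, so a sector is not a subset of $S/v$ and ``the pair of separatrix classes bounding a sector'' is not visibly encoded in the quotient topology. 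This collapse of distinct local sectors to the same orbit-space point is precisely the phenomenon behind the paper's Fig.~\ref{fig05} showing that $l_{Dv}$ is \emph{not} recoverable from $S/v$ in general, so the burden of proof at this step is real and your proposal leaves it open.

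The paper closes this step by a completely different, structural observation: for a regular flow in $\chi$ each multi-saddle connection consists of a single saddle with two homoclinic separatrices, so $l_{Dv}$ is determined by $l_V$ (and $l_{De}$ sends each separatrix to the unique saddle of its homoclinic connection, with no choice of time direction needed). That one fact replaces your entire local analysis and is where regularity is actually used. To repair your write-up you would either need to prove this structural statement about regular saddle connections, or genuinely carry out the local case analysis you sketch --- including a justification that the sector data survives passage to the orbit space, which as noted is the very point the non-regular counterexample exploits.
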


\begin{proof}
By Corollary \ref{cor36}, 
we can construct $D/v$ and $\widetilde{l_{\mathrm{ex}}}$. 
We may assume that $S$ is connected. 
Note that $S/v$ is a circle if and only if $S$ is a periodic torus. 
Thus we may assume that $S$ is not a periodic torus. 
Since $S$ is orientable and closed, the image of the label $l_{V}$ is $\{ c \}  \sqcup \mathcal{C}_D$, 
where $\mathcal{C}_D$ is the set of connected components of 
the multi-saddle connection diagram $D$.  
The regularity implies that each saddle connection consists of 
one saddle and two homoclinic separatrices 
and so the label $l_{Dv}$ can be constructed by $l_{V}$. 
By the regularity, the label $l_{De}$ maps a separatrix into the saddle in 
the homoclinic saddle connection containing it. 
Since the equivalence relation $\sim$ ignores the orientation of orbits, 
we don't need the orbit direction of $D$ to reconstruct the equivalent classes of $\chi_{r\sim}$. 
Therefore Theorem \ref{th34} implies the assertion. 
\end{proof}

Conversely, 
we consider the following problem: 
For a given abstract multi-graph, 
can it be realized 
by a non-wandering surface flow and in how many ways? 
Note that  
the abstract multi-graph $(\S^2/v_{\mathrm{ex}})/\sim_E$ 
by any non-wandering flow is a tree 
and that 
graphs can be realized by several graphs. 
We state realizability of graphs.

\begin{theorem}
For any non-trivial connected  finite abstract multi-graph $G$, 
there is a non-wandering flow $v$ on a closed surface $S$ 
such that $G$ is isomorphic to $(S/v_{\mathrm{ex}})/\sim_E$. 
\end{theorem}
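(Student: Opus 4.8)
The plan is to realize $G$ by a direct cut-and-paste construction: assign to each vertex of $G$ a compact surface piece carrying a local flow whose extended orbit space collapses to a single element of $V_{\mathrm{ex}}$, assign to each edge a periodic cylinder, and glue the pieces together along periodic boundary circles according to the incidence relation $r$ of $G$. Since $G$ is non-trivial it has at least one edge, and since it is connected every vertex has positive degree. Throughout, every orbit of the resulting flow will be either periodic or contained in a finite multi-saddle connection diagram, so that $\mathrm{int}\,\mathrm{P}=\emptyset$ and $\mathrm{LD}=\emptyset$; by Lemma \ref{lem3-11} the flow is then non-wandering, and since $|\mathop{\mathrm{Sing}}(v)|<\infty$ by construction, it lies in $\chi$.

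First I would build the vertex gadgets. Write $d(x)$ for the number of edge-ends of $G$ incident to a vertex $x$, where a loop contributes $2$. For $d(x)=1$ I take $P_x$ to be a closed disk carrying a flow with a single topological center, so that $P_x$ is a center disk with one periodic boundary circle, collapsing to a quasi-center in $V_{\mathrm{ex}}$. For $d(x)\ge 2$ I take $P_x$ to be a saturated annular neighborhood of a multi-saddle connection $D_x$ which is a bouquet of $m:=d(x)-1$ homoclinic loops through one singular point $p$. Concretely I realize $D_x$ as the zero level set of an area-preserving (Hamiltonian) model $H=\prod_{j=1}^{m}q_j$ in the plane, where each $q_j$ vanishes on a round circle through a common point $p$ and the circles are otherwise disjoint; the regular level sets of $H$ are then periodic orbits filling the complement of $D_x$, and $p$ is a single $(d(x)-2)$-saddle whose $2m=2(d(x)-2)+2$ separatrices are exactly the two arcs of each loop. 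A thin saturated neighborhood of $D_x$ is a planar surface of Euler characteristic $1-m$, hence an $(m+1)$-holed sphere, i.e. a compact surface with exactly $d(x)$ periodic boundary circles. In either case $P_x\setminus D_x$ is periodic and $P_x$ collapses to a single element of $V_{\mathrm{ex}}$ incident to exactly $d(x)$ local periodic sides.

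Next I would assemble $S$. To each edge $e$ I associate a cylinder $C_e=\S^1\times[0,1]$ with the periodic flow $v_t(\theta,s)=(\theta+t,s)$. Because $\sum_x d(x)=2|E(G)|$ equals the total number of cylinder ends, I can fix a bijection between the boundary circles of the pieces $\{P_x\}$ and the edge-ends that respects $r$: for an edge $e$ with $r(e)=\{x,y\}$ one end of $C_e$ is glued to a boundary circle of $P_x$ and the other to a boundary circle of $P_y$ (for a loop at $x$, the two ends are glued to two distinct boundary circles of $P_x$). Each gluing identifies two periodic collars $\S^1\times[0,\varepsilon)$, so I glue by a flow-direction-preserving rotation; the identified circle becomes a single two-sided periodic orbit and the flow extends continuously. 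The resulting $S$ is a closed surface, whose genus is whatever the combinatorics of $G$ forces and need not be planar, and $v$ is a well-defined continuous flow on it. By construction $\mathop{\mathrm{Sing}}(v)$ is finite, each singular point is a center or a multi-saddle, and the union of singular points and separatrices forms the finitely many closed multi-saddle connections $\{D_x\}$, so the structural description of $S/v_{\mathrm{ex}}$ from Lemma \ref{lem3-10} and Theorem \ref{th11} applies: the elements of $V_{\mathrm{ex}}$ are precisely the $D_x$ (and the centers), in bijection with $V(G)$; each $D_x$ is disjoint from $\mathop{\mathrm{Per}}(v)$ and separates the periodic region, so the connected components of $\mathop{\mathrm{Per}}(v)\setminus N$ are exactly the merged annuli (collar $\cup\,C_e\cup$ collar), one per edge, in bijection with $E(G)$; and such an annulus borders $D_x$ precisely when the corresponding edge is incident to $x$. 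Since $\mathrm{LD}=\emptyset$, the relation $\sim_E$ collapses exactly these periodic annuli, whence $(S/v_{\mathrm{ex}})/\sim_E$ is isomorphic to $(V_{\mathrm{ex}},E_{\mathrm{ex}},r_{\mathrm{ex}})\cong G$ as abstract multi-graphs.

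The main obstacle I expect is the vertex gadget for large degree: verifying that the bouquet $D_x$ really admits a flow with all complementary regions foliated by periodic orbits, that $p$ is an honest $(d(x)-2)$-saddle with exactly $2d(x)-2$ separatrices and no stray ones, and that a saturated neighborhood has exactly $d(x)$ boundary circles. The Hamiltonian model $H=\prod q_j$ handles this uniformly, but one must check that the only critical point of $H$ on the zero level is $p$, that the local phase portrait there is the claimed multi-saddle, and that the regular levels inside each petal and in the exterior are compact periodic orbits before the neighborhood is truncated. The remaining gluing and bookkeeping steps are routine given the separation-axiom characterizations established earlier.
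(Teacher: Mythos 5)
Your construction is essentially the paper's: the paper also realizes each vertex of degree $d$ by a punctured disk carrying a homoclinic $(d-2)$-saddle connection (a rose of $d-1$ petals) surrounded by periodic orbits with $d$ periodic boundary circles, caps low-degree ends with center disks, and glues along periodic boundary circles so that each edge becomes a periodic annulus; the only cosmetic difference is that the paper first cuts $G$ into vertex stars and realizes each star on its own surface before gluing, whereas you build the vertex gadgets and edge cylinders directly. The argument is correct, and your flagged concern about the degree-$d$ gadget is exactly the content of the paper's Figure of the homoclinic multi-saddle connection on the multiply punctured disk.
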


\begin{proof}
Since any non-trivial finite multi-graph $G = (V, E)$ can be 
decomposed into height one trees $T_i = (V_i, E_i)$ by cutting all edges
into pairs of edges, 
such a multi-graph can be reconstructed by 
height one trees gluing the new height zero elements. 
Note 
$|V| + 2 |E| = \sum_{i}|V_i|$ 
and 
$2 |E| = \sum_{i}|E_i|$. 
Therefore it suffices to show that,  for any $k \in \mathbb{Z}_{> 0}$, 
there is a non-wandering flow $w$ on a compact surface $T$ such that 
$(T/w_{\mathrm{ex}})/\sim_E$ is isomorphic to $T_k$, 
where $T_k$ is a tree with $k$ points of height $0$ and $(k-1)$ points of height one.  
Let $S_1$ be a closed center disk of a flow.  
Consider a flow $w$ on the metric completion $S_{k+1}$ of a $k$ punctured disk 
which consists of one homoclinic $(k-1)$-saddle connection and of periodic orbits 
such that the boundaries are periodic orbits (e.g. Fig. \ref{fig01}). 
Then the quotient space $(S_k/w_{\mathrm{ex}})/\sim_E$ is isomorphic to $T_k$. 
Gluing boundaries of such surfaces 
and 
pasting center disks to periodic orbits on the boundaries, 
the resulting flow on the resulting closed surface is desired. 
\end{proof}

\begin{figure}
\begin{center}
\includegraphics[scale=0.2]{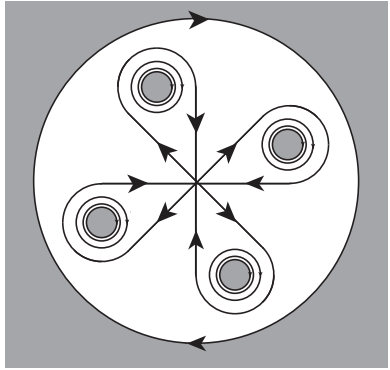}
\end{center} 
\caption{A homoclinic $3$-saddle connection and periodic orbits on $S_5$}\label{fig00}
\end{figure}

\section{Examples}

The finiteness in Theorem \ref{th34} 
(resp. Corollary \ref{cor36a}) is necessary. 
Indeed, 
there are two non-wandering flows 
with infinitely many singular points 
on $\mathbb{T}^2$ 
which are not topologically equivalent 
but which have 
the same (extended) orbit space up to homeomorphism 
and the same multi-saddle connection diagram 
(see Fig.\ref{fig01}). 
Moreover 
the non-wandering property in Theorem \ref{th34} (resp. Corollary \ref{cor36a}) 
is necessary. Indeed, 
there are two flows with wandering domains 
on $\mathbb{T}^2$ 
which are not topologically equivalent 
but which have 
the same (extended) orbit space  up to homeomorphism 
and the same multi-saddle connection diagram 
(see Fig.\ref{fig02}). 
As a same argument, 
%
the closedness (resp. orientability) is necessary in Corollary \ref{cor36a}. 
Indeed, consider an rotation $v_0$ with respect to an axis on $\S^2$. 
Replacing a periodic orbit into a saddle connection, 
we obtain a flow $v_1$ as in Fig.\ref{fig03}. 
Replacing a center disk $B_{\sigma}$ ($\sigma = x, y$) 
into a periodic orbit which is a boundary component (resp. a periodic M\"obius band)  
we can obtain two flows $v_{\sigma}$. 
Since the flow directions of a center disk containing $x$ 
is opposite to the one of a center disk containing $y$ (resp. $z$), 
the resulting flows $v_x$ and $v_y$ 
have the same extended orbit space  up to homeomorphism 
but are not topologically equivalent. 
%
The regularity is necessary in Corollary \ref{cor36a}. 
Indeed, there are 
two non-regular flows $v, w$ on a disk $\D^2$ 
which are not topologically equivalent such that  
the multi-saddle connection diagrams are not isomorphic 
as plane graphs (resp. abstract labeled multi-graphs) but isomorphic  
as abstract multi-graphs (see Fig.\ref{fig05}).

\begin{figure}
\begin{center}
\includegraphics[scale=0.475]{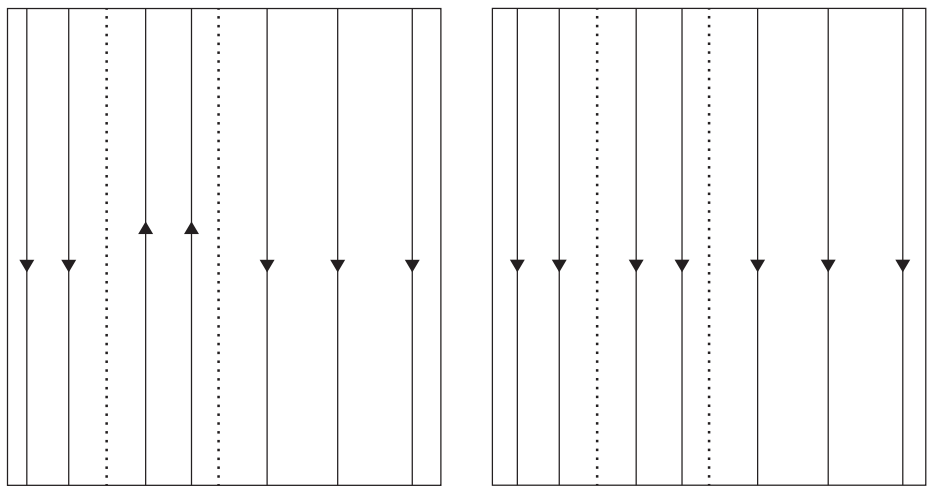}
\end{center} 
\caption{Two non-wandering flows 
which are not topologically equivalent 
but which have 
the same extended orbit space.}\label{fig01}
\end{figure}

\begin{figure}
\begin{center}
\includegraphics[scale=0.475]{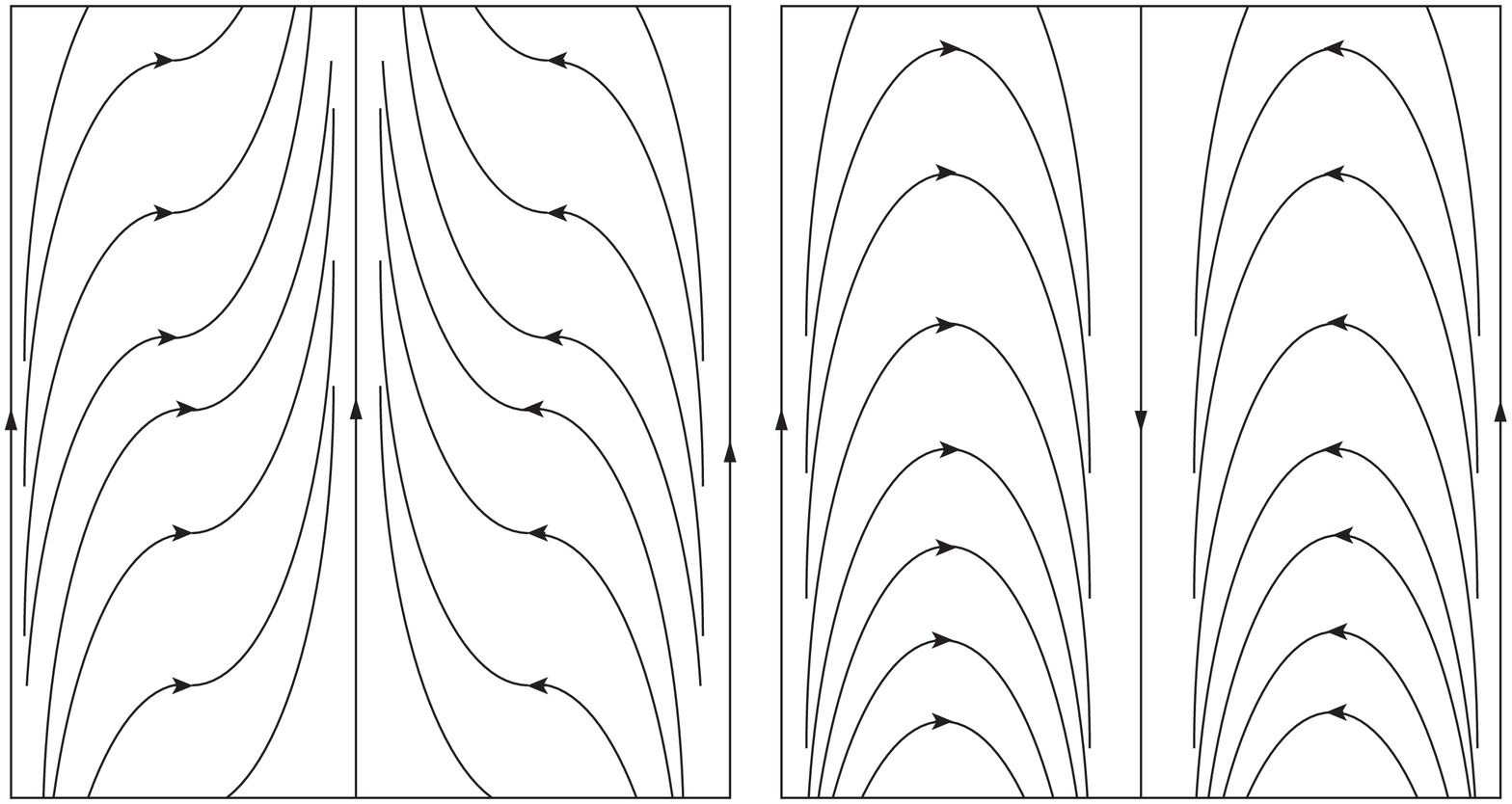}
\end{center} 
\caption{Two flows without singular points 
which are not topologically equivalent 
but which have 
the same extended orbit space.}\label{fig02}
\end{figure}

\begin{figure}
\begin{center}
\includegraphics[scale=0.35]{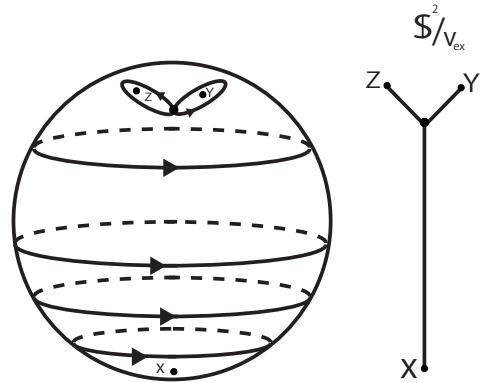}
\end{center} 
\caption{For a flow $v$ with three centers $x, y, z$ 
and the extended orbit space $\S^2/v$, 
replacing a center disk $B_{\sigma}$ ($\sigma = x, y$) into a periodic M\"obius band 
(resp. a periodic orbit which is a boundary component) 
we can obtain two flows $v_{\sigma}$. 
Then the resulting flows $v_x$ and $v_y$ are not topologically equivalent 
but have the same extended orbit space.
}\label{fig03}
\end{figure}

\begin{figure}
\begin{center}
\includegraphics[scale=0.17]{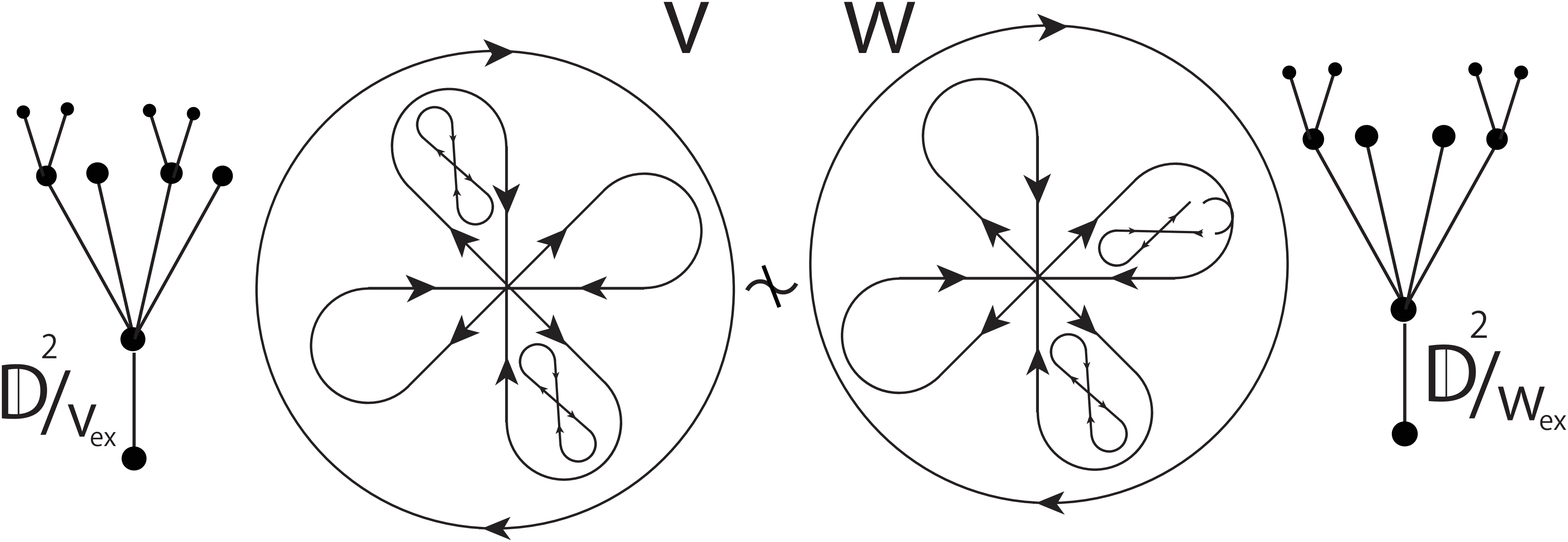}
\end{center} 
\caption{Two flows $v, w$ on a disk $\D^2$ 
which are not topologically equivalent and 
whose multi-saddle connection diagrams are 
not isomorphic as plane graphs (resp. abstract labeled multi-graphs)  
 but isomorphic as abstract multi-graphs
}\label{fig05}
\end{figure}
%



The author wishes to thank the members of the Kyoto Dynamical Systems seminar  
 for useful comments and valuable help.

\bibliography{aomsample}
\bibliographystyle{aomalpha}

\end{document}